\numberwithin{equation}{section}
\definecolor{darkred}{rgb}{1,0,0} 
\definecolor{darkgreen}{rgb}{0,0.8,0}
\definecolor{darkblue}{rgb}{0,0,1}
\providecommand\@dotsep{5}
\definecolor{orange}{RGB}{253,85,0}
\definecolor{darkgreen}{RGB}{0,95,10}
 \newcommand{\slp}{\alpha}
 \newcommand{\chat}{\widehat{c}}
 \newcommand{\chatEH}[1]{\widehat{c}^{\,\mathrm{EH}}_{#1}}
 \newcommand{\cEH}[1]{c^{\mathrm{EH}}_{#1}}
 \newcommand{\cV}[1]{c^{\mathrm{V}}_{#1}}
 \newcommand{\cGH}[1]{c^{\mathrm{GH}}_{#1}}
 \newcommand{\Z}{\mathds{Z}}
 \newcommand{\Q}{\mathds{Q}}
 \newcommand{\R}{\mathds{R}}
 \newcommand{\T}{\mathds{T}}
 \newcommand{\C}{\mathds{C}}
 \newcommand{\AAA}{\mathcal{A}}
 \newcommand{\FF}{\mathcal{F}}
 \newcommand{\UU}{\mathcal{U}}
 \newcommand{\VV}{\mathcal{V}}
 \newcommand{\WW}{\mathcal{W}}
 \newcommand{\KK}{\mathcal{K}}
 \newcommand{\HH}{\mathcal{H}}
 \newcommand{\DD}{\mathcal{D}}
 \newcommand{\Symp}{\mathrm{Symp}}
 \newcommand{\area}{\mathrm{area}}
 \newcommand{\crit}{\mathrm{crit}}
 \newcommand{\fix}{\mathrm{fix}}
 \newcommand{\id}{\mathrm{id}}
 \newcommand{\FR}{\mathrm{ind_{FR}}}
 \newcommand{\EH}{\mathrm{ind_{EH}}}
 \newcommand{\A}{\mathcal{A}}
 \newcommand{\aaa}{\bm{a}}
 \newcommand{\bbb}{\bm{b}}
 \DeclareMathOperator{\interior}{int}
 \DeclareMathOperator{\vol}{vol}
 \DeclareMathOperator*{\toup}{\longrightarrow} 
 \DeclareMathOperator*{\ttoup}{\llongrightarrow}
\DeclareRobustCommand{\llongrightarrow}{\relbar\joinrel\relbar\joinrel\rightarrow}
\DeclareMathOperator*{\bijection}{\longleftrightarrow}
 \theoremstyle{plain}
 \newtheorem{MainThm}{Theorem}
 \newtheorem{MainProp}[MainThm]{Proposition}
 \newtheorem{Question}[MainThm]{Question} 
 \newtheorem{Thm}{Theorem}[section]
 \newtheorem{Prop}[Thm]{Proposition}
 \newtheorem{Lemma}[Thm]{Lemma}
 \theoremstyle{definition}
 \newtheorem{Remark}[Thm]{Remark}
 \newtheorem{Example}[Thm]{Example}
\title[On the local maximizers of higher capacity ratios]{On the local maximizers of\\ higher capacity ratios}
\author[L. Baracco]{Luca Baracco}
\author[O. Bernardi]{Olga Bernardi}
\address{Luca Baracco and Olga Bernardi\newline\indent Università di Padova, Dipartimento di Matematica ``Tullio Levi Civita''\newline\indent Via Trieste 63, 35121 Padova, Italy}
\email{baracco@math.unipd.it, obern@math.unipd.it}
\author[C. Lange]{\\ Christian Lange}
\address{Christian Lange\newline\indent Ludwig-Maximilians-Universit\"at M\"unchen, Mathematisches Institut\newline\indent Theresienstraße 39, D-80333 Munich, Germany}
\email{lange@math.lmu.de}
\author[M. Mazzucchelli]{Marco Mazzucchelli}
\address{Marco Mazzucchelli\newline\indent CNRS, UMPA, \'Ecole Normale Sup\'erieure de Lyon\newline\indent 46 all\'ee d'Italie, 69364 Lyon, France}
\email{marco.mazzucchelli@ens-lyon.fr}
\thanks{Marco Mazzucchelli is partially supported by the ANR grants CoSyDy (ANRCE40-0014) and COSY (ANR-21-CE40-0002). Olga Bernardi and Marco Mazzucchelli are partially supported by the PRIN project 2017S35EHN 003 2019-2021 ``Regular and stochastic behaviour in dynamical systems''.}
\date{March 23, 2023}
\keywords{Viterbo conjecture, symplectic capacities, Ekeland-Hofer capacities}
\subjclass[2020]{37J39, 53D05, 53D35}
\begin{document}

\begin{abstract}
We prove an analogue of the 4-dimensional local Viterbo conjecture for the higher Ekeland-Hofer capacities: 
on the space of 4-dimensional smooth star-shaped domains of unitary volume, endowed with the $C^3$ topology, the local maximizers of the $k$-th Ekeland-Hofer capacities are those domains symplectomorphic to suitable rational ellipsoids.

\tableofcontents 
\end{abstract}

\maketitle

\vspace{-30pt}

\section{Introduction}
\label{s:intro}

\subsection{The Viterbo conjecture}
Symplectic capacities are fundamental invariants that govern many rigidity phenomena in symplectic and contact topology. In this paper, we consider symplectic capacities $c$ defined on subsets $B$ of the symplectic vector space $(\C^{n},\omega)$, for $n\geq2$. If $B$ is a domain (i.e.~the closure of a non-empty open subset) of finite volume, its associated \emph{capacity ratios} are defined as
\begin{align*}
 \chat(B)=\frac{c(B)}{\vol(B,\omega)^{1/n}}.
\end{align*}
Here, the volume is the integral of $\omega^n$ on $B$. In his seminal paper \cite{Viterbo:2000aa}, Viterbo conjectured that $\chat(B)\leq\chat(B^{2n})$ for any convex body $B\subset\C^{n}$ and any symplectic capacity $c$, where $B^{2n}$ denotes any round ball in $\C^{n}$. This conjecture, which is currently open, is particularly hard as it combines symplectic invariants with the non-symplectic notion of convexity. Its validity would have far reaching consequences well beyond symplectic geometry: it would imply the validity of the long-standing Mahler conjecture from convex geometry, as it was pointed out by Artstein-Avidan, Karasev, and Ostrover \cite{Artstein-Avidan:2014aa}. 

By a $2n$-dimensional \emph{smooth convex body}, we mean a compact subset $B\subset\C^{n}$ with non-empty interior and smooth positively-curved boundary $\partial B$; for the sake of convenience, we shall always require that $B$ contains the origin in its interior. We consider the $C^k$ topology on the space of smooth convex bodies: two smooth convex bodies $B_0,B_1\subset\C^{n}$ are $C^k$-close when their boundaries $\partial B_0,\partial B_1$ are $C^k$-close submanifolds. 
A theorem of Abbondandolo and Benedetti \cite{Abbondandolo:2019aa}, extending an earlier result of Abbondandolo, Bramham, Hryniewicz, and Salomao \cite{Abbondandolo:2018aa} for the Ekeland-Hofer capacity in dimension 4, asserts the validity of a local version of the Viterbo conjecture: on a sufficiently $C^3$-small neighborhood of the round ball $B^{2n}$, the local maximizers of the function $B\mapsto\chat(B)$ are precisely the smooth convex bodies symplectomorphic to $B^{2n}$. In dimension 4, this theorem has been recently strengthen by Edtmair \cite{Edtmair:2022aa}: on a sufficiently $C^3$-small neighborhood of the round ball $B^{2n}$, all symplectic capacities coincide.

\subsection{Main result}\label{ss:result}
The goal of this paper is to investigate questions analogous to the (local and global)  Viterbo conjecture for higher symplectic capacities, whose definition is the following.
Each tuple $\aaa=(a_1,...,a_n)\in(0,\infty]^n$ defines a $2n$-dimensional domain
\begin{align*}
 E(\bm a) = \bigg\{ z=(z_1,...,z_n)\in\C^{n}\ \bigg|\  \sum_{j=1}^n \frac{|z_j|^2}{a_j}=\frac1\pi \bigg\}.
\end{align*}
If all the entries $a_i$ are finite, $E(\aaa)$ is an ellipsoid.
A \emph{symplectic $k$-capacity} on $\C^{n}$ is a function $c_k:2^{\C^{n}}\to[0,\infty]$, where $2^{\C^{n}}$ is the collection of subsets of $\C^{n}$, satisfying the following three properties:
\begin{itemize}
\setlength\itemsep{5pt}

\item \textbf{(Monotonicity)} $c_k(U)\leq c_k(V)$ if there exists $\psi\in\mathrm{Symp}(\C^{n},\omega)$ such that $\psi(U)\subset V$.

\item \textbf{(Conformality)} $c_k(b\,U)=b^2 c_k(U)$ for each $b\in\R$ and $U\subset\C^{n}$.

\item \textbf{(Normalization)} For each $\aaa\in(0,\infty]^n$ with $a_1<\infty$, if we enumerate the set of positive multiples of the entries $a_j$ in increasing order as 
$\tau_1<\tau_2<\tau_3<...$,
the value $c_k(E(\aaa))$ is the $k$-th  element in the sequence
\begin{align*}
\underbrace{\tau_1,...,\tau_1}_{\times m_1},\underbrace{\tau_2,...,\tau_2}_{\times m_2},\underbrace{\tau_3,...,\tau_3}_{\times m_3},...
\end{align*}
where $m_h$ is the number of parameters $a_j$ having $\tau_h$ as a positive multiple.

\end{itemize}

Ordinary symplectic capacities on $\C^{n}$, as appearing in the Viterbo conjecture, are 1-capacities\footnote{In the literature, and in particular in the statement of the Viterbo conjecture, a stronger form of monotonicity is sometimes assumed for the symplectic capacities: $c(U)\leq c(V)$ if there exists a symplectic embedding of $U$ into $V$. Capacities and, more generally, $k$-capacities satisfying the weaker monotonicity stated above are sometimes called relative or non-intrinsic \cite[Sect.~12.1]{McDuff:2017aa}.}. While there are many 1-capacities in the literature, to the best of the authors' knowledge there are only few constructions of symplectic $k$-capacities: the  Ekeland-Hofer capacities $\cEH k$ \cite{Ekeland:1989aa, Ekeland:1990aa} defined via an infinite dimensional linking argument (see Section~\ref{ss:def_EH_capacities}), the Viterbo capacities $\cV k$ \cite{Viterbo:1989aa} defined via finite dimensional reductions, the Gutt-Hutchings capacities $\cGH k$ \cite{Gutt:2018aa} defined via $S^1$-equivariant symplectic homology, and the very recent Zhang capacities \cite{Zhang:2021aa} defined via the microlocal theory of sheaves. At least the three $k$-capacities $\cEH k$, $\cV k$, and $\cGH k$ are expected to coincide\footnote{The equality $\cEH k=\cGH k$ was recently announced by Gutt and Ramos \cite{Gutt:2023aa}.}.

We say that $B\subset\C^n$ is a $2n$-dimensional \emph{smooth star-shaped domain} when it is diffeomorphic to a compact $2n$-dimensional ball and has a smooth boundary $\partial B$ that is transverse to all radial lines on $\C^n$.  Notice that the smooth convex bodies, and in particular the ellipsoids, are  smooth star-shaped domains. Once again, we consider the $C^k$ topology on the space of smooth star-shaped domains: $B_0$ and $B_1$ are $C^k$-close when their boundaries $\partial B_0,\partial B_1$ are $C^k$-close submanifolds.

An ellipsoid $E=E(a_1,...,a_n)$ is called \emph{rational} when all the ratios $a_j/a_h$ are rational numbers. In this case, we denote by $\tau(E)$ the least common multiple of the parameters $a_1,...,a_n$. For each integer $m\geq1$, we define $k_m(E)$ to be the minimal integer $k\geq1$ such that $c_k(E)=m\,\tau(E)$. Here, $c_k$ is any symplectic $k$-capacity, and the values $k_m(E)$ are clearly independent of the choice of such $c_k$'s. Finally, we set $K(E):=\{k_m(E)\ |\ m\geq1\}$.
Our main result is a confirmation of a version of the local Viterbo conjecture for the higher Ekeland-Hofer capacities in dimension 4.

\begin{MainThm}
\label{mt:local_max}
On the space of $4$-dimensional smooth star-shaped domains endowed with the $C^3$ topology, the local maximizers of $B\mapsto \chatEH k(B)$ are precisely those domains symplectomorphic to a $4$-dimensional rational ellipsoid $E$ with $k\in K(E)$.
\end{MainThm}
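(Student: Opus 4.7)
The plan is to prove both implications by combining the variational theory of Ekeland-Hofer capacities with a four-dimensional Birkhoff-type foliation argument extending the analysis of Abbondandolo--Bramham--Hryniewicz--Salomao from the round ball to an arbitrary rational ellipsoid. Fix a rational ellipsoid $E=E(\aaa)$ with $k\in K(E)$, and write $\cEH{k}(E)=m\,\tau(E)$ for the unique integer $m\geq1$ determined by the relation $k=k_{m}(E)$; note that the Reeb flow on $\partial E$ is periodic of period $\tau(E)$.

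For the \emph{if} direction (rational ellipsoids are $C^{3}$-local maximizers), I would establish the sharp inequality $\chatEH{k}(B)\leq\chatEH{k}(E)$ for every smooth star-shaped $B$ sufficiently $C^{3}$-close to $E$, with equality forcing $B$ to be symplectomorphic to $E$. The periodicity of the Reeb flow on $\partial E$ provides a singular foliation of $E$ by symplectic disks, realized as the pages of an open book compatible with the Seifert fibration of $\partial E$. Under a $C^{3}$-small perturbation the Reeb flow on $\partial B$ is no longer periodic, but one can still construct a smooth family of embedded symplectic disks serving as approximate $m$-fold Birkhoff sections for the perturbed flow, mimicking the constructions of Abbondandolo--Bramham--Hryniewicz--Salomao. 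An isoperimetric-type inequality applied to this family lower-bounds the total symplectic area of the disks, which computes $\vol(B,\omega)$ up to an explicit factor depending only on $\aaa$, in terms of the minimum action of Reeb orbits crossing the disks; by the variational characterization of $\cEH{k}$ and the hypothesis $k\in K(E)$, this minimum action is at most $\cEH{k}(B)/m$. Rearranging produces an inequality of the form $\cEH{k}(B)^{2}\leq C(E)\,\vol(B,\omega)$ with $C(E)$ calibrated so that equality holds at $B=E$, proving local maximality.

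For the \emph{only if} direction (local maximizers are symplectomorphic to such ellipsoids), I would use a first-variation argument. Along a smooth deformation $B_{t}$ of $B_{0}=B$ driven by a normal velocity $f$ on $\partial B$, the derivative of $t\mapsto\cEH{k}(B_{t})$ can be expressed, via a Clarke-subdifferential analysis of the Ekeland-Hofer minimax, as an integral of $f$ against a measure supported on the closed Reeb orbits of $\partial B$ realizing $\cEH{k}(B)$, whereas the volume derivative is an integral of $f$ against the Liouville measure on all of $\partial B$. Local maximality of $\chatEH{k}$ along volume-preserving deformations forces these two measures to be proportional; since the first is supported on the carrier orbits, these orbits must fill up $\partial B$, meaning the Reeb flow on $\partial B$ is periodic. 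Combining this with the classification of Zoll contact structures on $S^{3}$ and the integrality constraints imposed by the normalization axiom on the resulting action spectrum identifies $B$, up to symplectomorphism, with a rational ellipsoid $E$ satisfying $k\in K(E)$.

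The principal technical obstacle, and where the bulk of the work is expected to live, is the construction of the approximate $m$-fold Birkhoff sections and the derivation of the sharp isoperimetric inequality with its equality case in the second paragraph. In contrast with the round ball, the Seifert fibration on $\partial E$ may possess exceptional fibers over which the pages of the open book pinch, so the transverse regularity of the perturbed foliation near the binding requires delicate control. The multiplicity $m$ introduces a further subtlety: the carrier orbits for $\cEH{k}(B)$ are naturally multiply covered or linked rather than a single simple one, and the isoperimetric inequality must be refined to record the linking data, with its equality case rigid enough to rule out every perturbation not arising from a symplectomorphism with $E$.
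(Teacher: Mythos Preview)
Your \emph{only if} direction is essentially correct and matches the paper's Lemma~3.8: a first-variation (Alvarez Paiva--Balacheff) argument shows that a local maximizer must be Besse, and then the classification of Besse contact $3$-spheres identifies it with a rational ellipsoid; Proposition~\ref{mp:ellipsoids}(i) then pins down $k\in K(E)$. The paper's version is slightly simpler than yours: rather than computing a Clarke subdifferential of $\cEH{k}$, it exploits only the continuity of $\cEH{k}$ and the nowhere-density of the action spectrum to freeze $\cEH{k}$ while shrinking the volume.

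Your \emph{if} direction, however, takes a route different from the paper's and contains a genuine gap. You propose to extend the ABHS Birkhoff-section/isoperimetric machinery directly from the round ball to an arbitrary rational ellipsoid, and then feed $\cEH{k}(B)$ into the resulting inequality. The missing link is the sentence ``by the variational characterization of $\cEH{k}$\dots\ this minimum action is at most $\cEH{k}(B)/m$.'' For $k=1$ one has $\cEH{1}(B)=\tau_1(\partial B)$, so the capacity \emph{is} the relevant action; for $k\geq 2$ there is no such identification, and the infinite-dimensional linking definition of $\cEH{k}$ gives no a~priori control on \emph{which} element of $\sigma(\partial B)$ it selects under perturbation. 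This is precisely the difficulty the paper isolates and resolves, and it does so by a different mechanism: it introduces the Clarke spectral invariants $s_k(B)$, proves the comparison $\cEH{k}(B)\leq s_k(B)$ for convex bodies (Proposition~\ref{p:capacity<spectral}), and then uses Morse theory of the Clarke functional near a Besse critical manifold (Lemma~\ref{l:perturbation}) to show that $s_{k_m}(B_1)$ is the \emph{minimum} of $\sigma(\partial B_1)$ near $m\,\tau(E)$, hence $s_{k_m}(B_1)\leq m\,\tau_{k_1}(B_1)$. The geometric isoperimetric step you describe is not redone either; it is invoked as the black-box inequality $\widehat\tau_{k_1}(B_1)\leq\widehat\tau_{k_1}(B_0)$ from Abbondandolo--Lange--Mazzucchelli. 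Chaining these gives $\chatEH{k_m}(B_1)\leq \widehat s_{k_m}(B_1)\leq m\,\widehat\tau_{k_1}(B_1)\leq m\,\widehat\tau_{k_1}(B_0)=\chatEH{k_m}(B_0)$. In short: your Birkhoff-section plan might eventually be made to work, but the step linking $\cEH{k}$ to a specific perturbed orbit is exactly the crux, and you have not supplied it; the paper circumvents it entirely via the Clarke functional.
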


\subsection{Open questions on higher capacity ratios}
In the Viterbo conjecture, the convexity assumptions on the domains cannot be removed: in an unpublished note \cite{Hermann:1998aa}, Hermann showed that, for any symplectic 1-capacity $c_1$ on $\C^n$, the associated capacity ratio $\chat_1$ is unbounded over the space of star-shaped domains. In contrast, 
a theorem of Artstein-Avidan, Milman, and Ostrover \cite{Artstein-Avidan:2008aa} asserts that any 1-capacity ratio is uniformly bounded over the space of convex bodies by a constant that is independent of the dimension (previously, Viterbo \cite{Viterbo:2000aa} showed that there is a uniform bound growing linearly in the dimension). A minor modification in their proof provides the following uniform bound for all $k$-capacity ratios.

\begin{Thm}[Artstein-Avidan, Milman, Ostrover]\label{t:AMO}
There exists a constant $a>0$ such that, for each integer $n\geq1$, for each symplectic $k$-capacity $c_k$ on $\C^{n}$, and for each $2n$-dimensional convex body $B$, we have
$\chat_k(B)\leq ak$.
\end{Thm}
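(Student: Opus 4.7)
The plan is to adapt the Artstein-Avidan—Milman—Ostrover argument for 1-capacities by inserting a single elementary estimate that introduces the factor of $k$.

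The key additional ingredient is an upper bound for $c_k$ on symplectic ellipsoids. For every $\bm a = (a_1, \ldots, a_n)$ with $a_1 = \min_j a_j$,
\begin{align*}
 c_k(E(\bm a)) \leq k\, a_1.
\end{align*}
Indeed, the values $a_1, 2 a_1, \ldots, k a_1$ are $k$ positive multiples of the entry $a_1$ of $\bm a$ that lie in the interval $(0, k a_1]$, so the $k$-th element (counted with multiplicity) of the increasing sequence of positive multiples of the $a_j$'s appearing in the normalization axiom cannot exceed $k a_1$. This replaces the identity $c_1(E(\bm a)) = a_1$ used in the AMO argument for 1-capacities.

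The rest of the proof copies AMO verbatim. Their strategy combines John's theorem (which provides an ellipsoid $E_0 \subset B$ with controlled outer inflation) with Williamson's theorem (which produces $\Phi \in \Sp(2n, \R)$ such that $\Phi(E_0) = E(\bm a)$ is in standard symplectic form). Using the monotonicity, conformality, and linear symplectic invariance of the capacity, one estimates $c_k(B)$ in terms of $c_k(E(\bm a))$, compares volumes via $\vol(B, \omega)^{1/n} \geq \vol(E(\bm a), \omega)^{1/n} = \bigl(\prod_j a_j\bigr)^{1/n} \geq a_1$, and invokes the Bourgain-Milman inequality (or an equivalent $M$-ellipsoid bound) to absorb the dimensional factors produced by the John inclusion. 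Substituting $c_k(E(\bm a)) \leq k a_1$ for $c_1(E(\bm a)) = a_1$ in this chain propagates a single multiplicative factor of $k$, yielding $\chat_k(B) \leq a\, k$ for the same universal constant $a$ as in the AMO bound.

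The principal technical obstacle, already present in AMO and not specific to the $k$-capacity setting, is the dimension-free step: a naive combination of John and Williamson yields only $\chat_k(B) \leq C n k$ for a universal $C$, and removing the factor of $n$ requires the Bourgain-Milman inequality (or an equivalent convex-geometric input). Once this dimension-free estimate for 1-capacities is accepted as given, the elementary ellipsoid bound above is the only new ingredient needed to obtain the claim for arbitrary $k$.
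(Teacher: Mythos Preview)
Your argument is correct, but the paper's proof is considerably more direct. Instead of re-running the Artstein-Avidan--Milman--Ostrover machinery with the factor of $k$ inserted at the ellipsoid-comparison step, the paper invokes the AMO result as a black box: it quotes the bound
\[
\frac{c_{\mathrm{lin}}(B)}{\vol(B)^{1/n}}\leq a,
\]
where $c_{\mathrm{lin}}(B)$ is the infimum of $b>0$ for which a linear symplectomorphism maps $B$ into the cylinder $E(b,\infty,\ldots,\infty)$. Since the normalization axiom gives the \emph{exact} value $c_k(E(b,\infty,\ldots,\infty))=kb$ on cylinders, monotonicity yields $c_k(B)\leq k\,c_{\mathrm{lin}}(B)$, and the claimed bound follows immediately. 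Your route uses finite ellipsoids (where one only has the inequality $c_k(E(\bm a))\leq k\,a_1$) together with John and Williamson, and must then re-open the dimension-free step via Bourgain--Milman to kill the polynomial-in-$n$ loss from the John inclusion. That works, but the paper's use of cylinders and of $c_{\mathrm{lin}}$ neatly sidesteps both the inequality-versus-equality issue and any need to revisit the convex-geometric internals of AMO.
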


\begin{proof}
By \cite[Theorem 1.6]{Artstein-Avidan:2008aa}, there exists a constant $a>0$ such that, for any $n\geq1$ and for any convex body $B\subset\C^{n}$, we have
\begin{align*}
\frac{c_{\mathrm{lin}}(B)}{\vol(B)^{1/n}}
\leq 
a,
\end{align*}
where  
\begin{align*}
c_{\mathrm{lin}}(B)
:=
\inf\big\{b>0\ \big|\ \psi(B)\subset \underbrace{E(b)\times\C^{n-1}}_{=E(b,\infty,...,\infty)}\mbox{ for some }\psi\in\mathrm{Sp}(2n)\big\}.
\end{align*}
The normalization of the symplectic $k$-capacity implies $c_k(E(b)\times\C^{n-1})=kb$, and together with the monotonicity property we obtain $c_k(B)\leq k\,c_{\mathrm{lin}}(B)$. Therefore 
\[
\chat_k(B)
\leq
k \frac{ c_{\mathrm{lin}}(B)}{\vol(B)^{1/n}}
\leq
ak .
\qedhere
\]
\end{proof}

In view of Theorem~\ref{t:AMO}, it is natural to raise the following question, which for 1-capacities would be answered by the Viterbo conjecture if confirmed. 

\begin{Question}
For any symplectic $k$-capacity $c_k$, what are the global maximizers of the function $B\mapsto\chat_k(B)$ over the space of convex bodies in $\C^n$?
\end{Question}

Simple algebraic computations allow to detect the local and global maximizers of $\chat_k$ over the space of ellipsoids.

\begin{MainProp}\label{mp:ellipsoids}
Let $c_k$ be any symplectic $k$-capacity on $\C^n$.
\begin{itemize}
\setlength\itemsep{3pt}

\item[$(i)$] The local maximizers of the capacity ratio $\chat_k$ on the space of $2n$-dimensional ellipsoids are precisely those rational ellipsoids $E$ such that $k\in K(E)$.

\item[$(ii)$] The global maximum of the capacity ratio $\chat_k$ on the space of $2n$-dimensional ellipsoids is
\begin{align*}
 \max_{\aaa}\ \chat_k(E(\aaa))
 =
 (q+1)^{\frac{n-r+1}{n}} (q+2)^{\frac{r-1}{n}},
\end{align*}
where  $q\geq0$ and $r\in\{1,...,n\}$ are integers such that $k=q n + r$. The global maximizers are precisely those rational ellipsoids $E(\aaa)$ of the form
\begin{align*}
\frac{a_i}{a_n}=
\left\{
  \begin{array}{@{}ll}
    \frac{q+1}{q+2}, & \mbox{if $r>1$ and $1\leq i\leq r-1$,}\vspace{5pt} \\ 
    1 & \mbox{if $r\leq i\leq n$.}  
  \end{array}
\right.
\end{align*}

\end{itemize}
\end{MainProp}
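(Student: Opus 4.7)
The plan is to reparametrize and reduce each part to an elementary inequality. For any ellipsoid $E(\aaa)$ with all $a_j$ finite, since $\vol(E(\aaa),\omega) = a_1\cdots a_n$, setting $c := c_k(E(\aaa))$ and $x_j := c/a_j$ gives $\widehat{c}_k(E(\aaa))^n = \prod_j x_j$. The defining property of $c_k$ says that fewer than $k$ positive multiples of the $a_j$'s are strictly below $c$; using $\#\{m \in \Z_{\geq 1} : m < x\} = \lceil x\rceil - 1$ this translates into the key bound
\begin{equation*}
\sum_{j=1}^n \lceil x_j \rceil \leq k + n - 1. \tag{$\star$}
\end{equation*}

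For part~$(ii)$ we combine $\prod_j x_j \leq \prod_j \lceil x_j \rceil$ with $(\star)$ and reduce to maximizing $\prod_j u_j$ over $n$-tuples of positive integers with sum at most $k+n-1$. A standard swap argument---if $u_j \geq u_h + 2$, replacing $(u_j, u_h)$ by $(u_j - 1, u_h + 1)$ preserves the sum and strictly increases the product---shows that the optimum has all $u_j$'s differing by at most one; a further check confirms the optimum is attained at the maximal admissible sum $k+n-1 = (q+1)n + (r-1)$, giving $n - r + 1$ entries equal to $q+1$ and $r - 1$ entries equal to $q + 2$, with product $(q+1)^{n-r+1}(q+2)^{r-1}$. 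Tracking the two equality cases---$\prod_j x_j = \prod_j \lceil x_j \rceil$ forces each $x_j$ to be an integer, and tightness in the integer bound pins down the multiset of values---isolates the claimed rational ellipsoids as the unique maximizers.

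For part~$(i)$ we fix $E_0 = E(\aaa_0)$, set $\tau_0 = c_k(E_0)$ and $N(\tau) = \sum_j \lfloor \tau/a_{0,j} \rfloor$, and introduce $\mathcal{I} = \{j : a_{0,j} \text{ divides } \tau_0\}$, $s = |\mathcal{I}|$, $t = k - N(\tau_0^-) \in \{1,\ldots,s\}$. Three cases arise. If $s < n$, picking $j^* \notin \mathcal{I}$ and slightly decreasing $a_{0,j^*}$ leaves $c_k$ equal to $\tau_0$ (no multiple of $a_{0,j^*}$ crosses $\tau_0$) while strictly lowering the volume, so $\widehat{c}_k$ rises and $E_0$ is not a local maximizer. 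If $s = n$ and $t > 1$, the perturbation $\delta_j = 0$ for $j < t$ and $\delta_j = \epsilon a_{0,j}$ for $j \geq t$ splits the $n$ coinciding multiples at $\tau_0$ into $t-1$ copies still at $\tau_0$ and $n - t + 1$ copies at $\tau_0(1+\epsilon)$; the new $c_k$ equals $\tau_0(1+\epsilon)$ while the volume scales by $(1+\epsilon)^{n-t+1}$, multiplying $\widehat{c}_k$ by $(1+\epsilon)^{(t-1)/n} > 1$, so again $E_0$ is not a local maximizer. Finally, if $s = n$ and $t = 1$, then for any small $\delta$ we compute $c_k(E(\aaa_0+\delta)) = \tau_0\bigl(1 + \min_j \eta_j\bigr)$ with $\eta_j := \delta_j/a_{0,j}$ and $\vol = \prod_j a_{0,j}(1 + \eta_j)$; the elementary inequality $(1 + \min_j \eta_j)^n \leq \prod_j(1 + \eta_j)$ (valid since all factors are positive for small $\delta$) yields $\widehat{c}_k(E(\aaa_0+\delta)) \leq \widehat{c}_k(E_0)$, so $E_0$ is a local maximizer. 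The dictionary is then immediate: $s = n$ is equivalent to $\tau_0$ being a common positive multiple of all the $a_{0,j}$'s, i.e.\ after rescaling to coprime integer entries, to $E_0$ being rational with $\tau_0$ a multiple of $\tau(E_0)$; under this, $t = 1$ is precisely the condition $k \in K(E_0)$.

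The main technical subtlety is the perturbation book-keeping in part~$(i)$---specifically, verifying that for a sufficiently small $\delta$ no extraneous multiple of any $a_{0,j}$ drifts into the $c_k$-window around $\tau_0$---but this follows from the discreteness of the sequence of multiples near $\tau_0$.
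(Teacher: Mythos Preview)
Your proof is correct. For part~$(i)$ your argument is essentially the paper's: both use the same ``shrink one axis'' perturbation to rule out the case where some $a_j$ fails to divide $c_k$, and both handle the converse direction via the local formula $c_k(E(\aaa+\delta)) = \tau_0(1+\min_j \eta_j)$ together with the elementary inequality $(1+\min_j\eta_j)^n \leq \prod_j(1+\eta_j)$ (the paper phrases this as a 0-homogeneous rescaling trick, but it is the same inequality). Your treatment of the intermediate case $s=n$, $t>1$ uses a different perturbation---you inflate $n-t+1$ axes multiplicatively rather than shrinking a single one---but both produce a strict increase in $\widehat c_k$.

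For part~$(ii)$ your route is genuinely different and somewhat cleaner. The paper first proves existence of a global maximizer by a compactness argument, then invokes part~$(i)$ to obtain integers $s_j$ with $s_1 a_1 = \cdots = s_n a_n = c_k$, and finally runs a swap argument directly on the $s_j$'s. You instead substitute $x_j = c_k/a_j$ to get $\widehat c_k^n = \prod_j x_j$, extract the combinatorial constraint $\sum_j \lceil x_j \rceil \leq k+n-1$ from the normalization axiom, and bound $\prod_j x_j \leq \prod_j \lceil x_j \rceil$ before running the same swap on the integers $u_j = \lceil x_j \rceil$. This bypasses both the existence step and the dependence on part~$(i)$, at the cost of having to track two equality cases rather than one. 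Either approach yields the same optimal multiset $\{q+1,\ldots,q+1,q+2,\ldots,q+2\}$ and the same characterisation of maximizers.
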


One might be tempted to conjecture that each capacity ratio $\chat_k$ achieves its maximum over the space of ellipsoids. While for $k=1$ this is implied by the Viterbo conjecture, for large $k$ this turns out to be false! 

\begin{Example}
\label{ex:poly_vs_ellipsoids}
Proposition~\ref{mp:ellipsoids} implies that the 4-dimensional ellipsoid maximizing any $k$-capacity ratio $\chat_k$ is $E(\big\lceil\tfrac{k}{2}\big\rceil,\big\lceil\tfrac{k+1}{2}\big\rceil)$, and 
\begin{align*}
 \chat_k\big(E(\big\lceil\tfrac{k}{2}\big\rceil,\big\lceil\tfrac{k+1}{2}\big\rceil)\big)
 =
 \sqrt{\big\lceil\tfrac{k}{2}\big\rceil\big\lceil\tfrac{k+1}{2}\big\rceil},
\end{align*}
where $\lceil x\rceil=\min\{y\in\Z\ |\ y\geq x\}$ denotes the ceiling function.
For any $k$-capacity $c_k\in\{\cEH k,\cV k,\cGH k\}$,  the polydisks $P(a,b)=E(a)\times E(b)$ have capacity ratios
\begin{align*}
 \chat_k(P(a,b))
 =
 k\frac{\min\{a,b\}}{\sqrt{2ab}},
\end{align*}
as it was computed in \cite{Ekeland:1990aa, Moatty:1994aa, Gutt:2018aa}, see Example~\ref{ex:polydisk}; in particular $P(1,1)$ maximizes all capacity ratios $\chat_k$ among 4-dimensional polydisks: 
\[\chat_k(P(1,1))
= 
 \frac{k}{\sqrt2}.\]
For $k=2$, the maximizer polydisk $P(1,1)$ and the maximizer ellipsoid $E(1,2)$ have the same capacity ratio 
\[\chat_2(P(1,1))=\chat_2(E(1,2))=\sqrt2,\] 
whereas for any $k\geq 3$ we have 
\[
\tag*{\qed}
\chat_k(P(1,1))>\chat_k\big(E(\big\lceil\tfrac{k}{2}\big\rceil,\big\lceil\tfrac{k+1}{2}\big\rceil)\big),\qquad  \forall k\geq3.\]
\end{Example}

Example~\ref{ex:poly_vs_ellipsoids} suggests that $P(1,1)$ may have relevant maximality properties for the higher capacity ratios. This is the case within a suitable class of toric domains in $\C^n$, which we now introduce. Let $S^1\subset\C$ be the unit circle in the complex plane, and consider the action of the torus $\T^n=S^1\times...\times S^1$ on $\C^n$ given by
\begin{align*}
 (e^{i\theta_1},...,e^{i\theta_n})\cdot(z_1,...,z_n)
 =
 (e^{i\theta_1}z_1,...,e^{i\theta_n}z_n).  
\end{align*}
A toric domain is a $\T^n$-invariant subset of $\C^n$ given by the closure of some non-empty open set. Any domain $\Omega\subset[0,\infty)^n$ is the profile of an associated toric domain 
\[X_\Omega:=\big\{(z_1,...,z_n)\in\C^n\ \big|\ (\pi|z_1|^2,...,\pi|z_n|^2)\in\Omega\big\}.\]
Two classes of toric domains are particularly relevant in symplectic geometry:
\begin{itemize}
\setlength\itemsep{5pt}

\item The \emph{convex} toric domains $X_\Omega$, which are those toric domains such that $\{(x_1,...,x_n)\in\R^n\ |\ (|x_1|,...,|x_n|)\in\Omega\}$ is compact and convex.

\item The \emph{concave} toric domains $X_\Omega$, which are those toric domains whose profile $\Omega$ is compact and such that $[0,\infty)^n\setminus\Omega$ is convex.

\end{itemize}
Polydisks $P(\aaa)=E(a_1)\times...\times E(a_n)$ are examples of convex toric domains, and ellipsoids $E(\aaa)$ are the only toric domains that are simultaneously convex and concave. We stress that ``convex'' and ``concave'' here do not refer to the convexity of $X_\Omega$ as a subset of $\C^n$: all convex toric domains are convex subsets of $\C^n$, but some concave toric domains are convex subsets of $\C^n$ as well. In his Ph.D. thesis \cite{Moatty:1994aa}, Moatty provided combinatorial formulas to compute the Viterbo capacities $\cV k$ of convex or concave toric domains. Analogous formulas were provided by Gutt and Hutchings \cite{Gutt:2018aa} for their capacities $\cGH k$. 

If $B\subset \C^n$ is a smooth star-shaped domain, its boundary $\partial B$ admits a canonical contact form $\lambda|_{\partial B}$, which is the restriction of the 1-form on $\C^n$ given by
\begin{align}
\label{e:Liouville}
\lambda=\frac12\sum_{j=1}^{n} \big( x_jdy_j-y_jdx_j \big).
\end{align}
Here, $x_1,y_1,...,x_n,y_n$ are the standard Darboux coordinates on $\C^n$. As pointed out in \cite{Gutt:2018aa}, the formulas for convex and concave toric domains hold for all symplectic $k$-capacities satisfying the following extra condition:

\begin{itemize}

\item \textbf{(Closed Reeb orbits)} For all smooth star-shaped domains $B\subset\C^n$ such that $(\partial B,\lambda)$  has non-degenerate closed Reeb orbits, $c_k(B)$ is the period of a closed Reeb orbit with Conley-Zehnder index $2k+n-1$.

\end{itemize}

Moatty-Gutt-Hutchings' formulas may be employed to study the global maximizers of the higher capacity ratios over convex and concave toric domains. The combinatorics becomes rather involved. We worked out the details in dimension 4.

\begin{MainProp}
\label{mp:toric}
For any symplectic $k$-capacity on $\C^2$ satisfying the extra ``closed Reeb orbits'' assumption, the following points hold.
\begin{itemize}
\setlength\itemsep{3pt}

\item[$(i)$] On the space of $4$-dimensional concave toric domains, $\chat_k$ achieves its global maximum on the ellipsoid $E\big(\big\lceil\frac k2\big\rceil,\big\lceil\frac {k+1}2\big\rceil\big)$.

\item[$(ii)$] On the space of $4$-dimensional convex toric domains, $\chat_k$ achieves its maximum on:
\begin{itemize}
\item[$\bullet$]  the round balls $E(a,a)$, $a>0$, if $k=1$,
\item[$\bullet$] the ellipsoids $E(a,2a)$ and the polydisks $P(a,a)$, $a>0$, if $k=2$,
\item[$\bullet$] the polydisks $P(a,a)$, $a>0$, if $k\geq 3$.
\end{itemize}
\end{itemize}
\end{MainProp}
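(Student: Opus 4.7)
}
The plan is to apply the combinatorial formulas of Moatty~\cite{Moatty:1994aa} and Gutt--Hutchings~\cite{Gutt:2018aa} that compute $c_k$ on convex and concave toric domains in $\C^2$, which are available for every $k$-capacity satisfying the closed Reeb orbits axiom. Together with the planar identity $\vol(X_\Omega) = 2\,\mathrm{Area}(\Omega)$, these reduce the problem to a two-dimensional optimization on the profile $\Omega \subset [0,\infty)^2$.

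For part (i), the Gutt--Hutchings formula on a concave toric domain, whose profile is bounded above by a convex arc $\gamma$ joining the axis intercepts $(A, 0)$ and $(0, B)$, reads
\begin{align*}
c_k(X_\Omega) = \max_{v \in \Z_{\geq 0}^2,\, v_1 + v_2 = k+1}\ \min_{(x, y) \in \gamma}(v_1 x + v_2 y).
\end{align*}
Let $v^\ast$ realize the outer maximum, let $(x^\ast, y^\ast) \in \gamma$ realize the corresponding inner minimum, and set $\mu := c_k(X_\Omega)$. At $(x^\ast, y^\ast)$ the tangent to $\gamma$ has slope $-v_1^\ast / v_2^\ast$ and lies on the line $v_1^\ast x + v_2^\ast y = \mu$; by convexity of $\gamma$ this tangent supports $\gamma$ from below, so the triangle $T^\ast := \{(x, y) \in [0, \infty)^2 : v_1^\ast x + v_2^\ast y \leq \mu\}$ is contained in $\Omega$. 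Since $\mathrm{Area}(T^\ast) = \mu^2 / (2 v_1^\ast v_2^\ast)$, this gives
\begin{align*}
\chat_k(X_\Omega)^2 = \frac{\mu^2}{2\,\mathrm{Area}(\Omega)} \leq v_1^\ast v_2^\ast \leq \max_{v_1 + v_2 = k+1}(v_1 v_2) = \big\lceil\tfrac{k}{2}\big\rceil \big\lceil\tfrac{k+1}{2}\big\rceil,
\end{align*}
with equality forcing $\Omega = T^\ast$, i.e., $X_\Omega$ to be symplectomorphic to $E(\lceil k/2 \rceil, \lceil (k+1)/2 \rceil)$; this matches the ellipsoid maximum of Proposition~\ref{mp:ellipsoids}(ii).

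For part (ii), the analogous formula for convex toric domains reads
\begin{align*}
c_k(X_\Omega) = \min_{v \in \Z_{\geq 0}^2,\, v_1 + v_2 = k}\ \max_{(x, y) \in \Omega}(v_1 x + v_2 y),
\end{align*}
and one splits the argument according to where the inner maximum for the minimizing $v^\ast$ is attained. If it is attained at an interior point of the strictly concave part of $\partial \Omega$, a tangent estimate dual to part (i) combined with the inclusion $E(a_1, a_2) \subseteq X_\Omega$ (valid because $\Omega$ contains the triangle with vertices $(0, 0)$, $(a_1, 0)$, $(0, a_2)$) yields the ellipsoid bound $\chat_k(X_\Omega) \leq \sqrt{\lceil k/2 \rceil \lceil (k+1)/2 \rceil}$. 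If instead it is attained at a corner of $\Omega$ (the polydisk-like case), applying the formula simultaneously at the extremal lattice directions $(k, 0)$, $(0, k)$, $(1, k-1)$, and $(k-1, 1)$ forces $\Omega$ to contain the full rectangle $[0, a_1] \times [0, a_2]$, so that $\vol(X_\Omega) \geq 2 a_1 a_2$; combined with $c_k(X_\Omega) \leq k \min(a_1, a_2)$ this yields the polydisk bound $\chat_k(X_\Omega) \leq k/\sqrt{2}$. The final case split into $k = 1$, $k = 2$, and $k \geq 3$ then follows from the numerical comparison carried out in Example~\ref{ex:poly_vs_ellipsoids}.

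The hard part will lie in part (ii), specifically in the dichotomy between the interior-tangent and corner cases: profiles of convex toric domains whose boundary arc mixes a strictly concave portion with a nearly flat portion do not obviously fall into either regime, and one must verify that such ``hybrid'' profiles cannot exceed the better of the ellipsoid and polydisk bounds. This will likely require iterating the above two-sided analysis and exploiting additional lattice constraints coming from the Gutt--Hutchings formula at several directions simultaneously, together with sharper area lower bounds tailored to the local geometry of $\partial \Omega$.
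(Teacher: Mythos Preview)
Your argument for part~(i) is correct and essentially matches the paper's: both inscribe in $\Omega$ the triangle cut off by the tangent line to the convex boundary curve at the minimizing point, and then reduce to the ellipsoid case. Your version is slightly more direct in that you compute the bound $\chat_k(X_\Omega)^2\leq v_1^*v_2^*$ explicitly rather than passing through Proposition~\ref{mp:ellipsoids}.

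Part~(ii), however, has genuine gaps already in the two ``easy'' cases, not only in the hybrid regime you flag. In your interior-tangent case the supporting line to the concave boundary lies \emph{above} $\Omega$, so the tangent triangle $T^*$ \emph{contains} $\Omega$ rather than the reverse; this yields $\mathrm{Area}(\Omega)\leq \mu^2/(2v_1^*v_2^*)$, which bounds $\chat_k(X_\Omega)$ from \emph{below}, not above. The auxiliary inclusion $E(a_1,a_2)\subseteq X_\Omega$ with $a_1,a_2$ the axis intercepts is valid by concavity of $f$, but monotonicity then only gives $c_k(E(a_1,a_2))\leq c_k(X_\Omega)$ and $\vol(E(a_1,a_2))\leq\vol(X_\Omega)$, neither of which controls $\chat_k(X_\Omega)$ from above. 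In your corner case, the lattice directions $(k,0)$ and $(0,k)$ give $c_k(X_\Omega)\leq k\min(x_0,y_0)$ with $x_0,y_0$ the axis intercepts of $\Omega$, whereas the rectangle one can inscribe at a corner $p=(p_1,p_2)\in\partial\Omega$ is $[0,p_1]\times[0,p_2]$; since $(p_1,p_2)\neq(x_0,y_0)$ in general, combining these two estimates does not produce $\chat_k(X_\Omega)\leq k/\sqrt2$. The extra directions $(1,k-1)$ and $(k-1,1)$ do not repair this mismatch.

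The paper's route for (ii) is structurally different and does not rely on any such dichotomy. It first shows a maximizer exists, then proves (by replacing $\Omega$ with the convex hull of finitely many supporting points, and by chamfering or truncating at unused corners) that the maximizing profile must be piecewise linear with at most two corners, located at $(x_1,y_0)$ and $(x_0,y_2)$. This reduces the problem to a three-parameter family $\Omega(\alpha,s,t)$, for which $c_k$ and $\mathrm{Area}$ are written down explicitly; the inequality $2\chat_k(X_\Omega)^2\leq k^2$ and its equality cases are then verified by a direct algebraic computation occupying several lemmas. There is no clean duality with the concave case.
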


The situation is largely unexplored in higher dimensions. Let $P_{2n}:=P(1,...,1)$ be the $2n$-dimensional polydisk with unitary parameters, and $E_{2n}$ the global maximizer of $\chat_k$ over the space of $2n$-dimensional ellipsoids (see Proposition~\ref{mp:ellipsoids}(ii)). Simple computations show that there exists a minimal integer $\kappa(n)\geq2$ such that 
\[\chat_k(P_{2n})\geq\chat_k(E_{2n}),\qquad \forall k\geq\kappa(n).\] 
Moreover, $\kappa(n)\to\infty$ as $n\to\infty$. This raises the following questions.

\begin{Question}
For each dimension $2n\geq4$ and for each $k\geq\kappa(n)$, is the polydisk $P_{2n}$ a global maximizers of the $k$-capacity ratios $\chat_k$ over the space of $2n$-dimensional convex toric domains?
\end{Question}

\begin{Question}
In some dimension $2n\geq4$, do there exist arbitrarily large integers $k$ and $2n$-dimensional convex bodies $B\subset\C^n$ such that $\chat_k(B)>\chat_k(P_{2n})$?
\end{Question}

\subsection{Outline of the proof of Theorem~\ref{mt:local_max}}
The local Viterbo conjecture was established as a consequence of a systolic characterization of Zoll contact manifolds, which we now briefly recall. Let $(N,\alpha)$ be a closed contact manifold, $X$ its Reeb vector field defined by $\alpha(X)\equiv1$ and $d\alpha(X,\cdot)\equiv0$, and $\phi^t:N\to N$ its Reeb flow. The Weinstein conjecture, which is a theorem for several classes of closed contact manifolds including all 3-dimensional ones, asserts that the Reeb flow $\phi^t$ has at least one closed  orbit. If this is the case, we denote by $\tau_1(N,\alpha)=\inf\{t>0\ |\ \fix(\phi^t)\neq\varnothing\}$ the systole, namely the minimum among the periods of the closed Reeb orbits; if the Weinstein conjecture fails for $(N,\alpha)$, we instead set $\tau_1(N,\alpha)=0$. We consider the associated systolic ratio
\begin{align}
\label{e:systolic_ratio}
 \widehat \tau_1(N,\alpha)=\frac{\tau_1(N,\alpha)}{\vol(N,\alpha)^{1/n}},
\end{align}
where $n=\tfrac12(\dim(N)+1)$, and $\vol(N,\alpha)$ is the integral of $\alpha\wedge (d\alpha)^{n-1}$.
On the space of contact forms on $N$ endowed with the $C^3$-topology, the local maximizers of the function $\alpha\mapsto\widehat\tau_1(N,\alpha)$ are precisely the \emph{Zoll} contact forms: those contact forms whose associated Reeb orbits are all closed and have the same minimal period (namely $\phi^{\tau_1(N,\alpha)}=\id$ and $\fix(\phi^t)=\varnothing$ for all $t\in(0,\tau_1(N,\alpha))$). This remarkable theorem was established in full generality by Abbondandolo and Benedetti \cite{Abbondandolo:2019aa}, and previously in dimension 3 in a series of papers \cite{Alvarez-Paiva:2014aa, Abbondandolo:2018aa, Benedetti:2021aa}. 

A closed contact manifold $(N,\alpha)$, or just the contact form $\alpha$, is called \emph{Besse} when all its Reeb orbits are closed, and in this case they automatically have a common period according to a theorem of Wadsley \cite{Wadsley:1975aa}. The simplest examples of Besse closed contact manifolds are the boundaries of the rational ellipsoids. Actually, in dimension 4, for each smooth star-shaped domain $B\subset\C^2$ with a Besse boundary there exists a symplectomorphism $\psi\in\Symp(\C^n,\omega)$ such that $\psi(B)$ is a rational ellipsoid; this is a consequence of the classification of 3-dimensional Besse contact spheres \cite[Theorem~1.1]{Mazzucchelli:2023aa}, together with a straightforward generalization of \cite[Prop.~4.3]{Abbondandolo:2018aa}.

Assume now that $(N,\alpha)$ is a closed contact 3-manifold. For each $k\geq1$, we denote by $\tau_k(N,\alpha)$ the infimum of the values $\tau>0$ such that there exist at least $k$ closed Reeb orbits (counting iterates as well) of period less than $\tau$; in formulas:
\begin{align*}
 \tau_k(N,\alpha):=\inf\bigg\{\tau>0\ \bigg|\ \sum_{t\in(0,\tau)} \#\big(\fix(\phi^t)/\!\sim\!\big)\geq k\bigg\},
\end{align*}
where $\sim$ is the equivalence relation identifying $z\sim \phi^t(z)$ for all $z\in N$ and $t\in\R$. Extending the notation~\eqref{e:systolic_ratio}, we write
\begin{align*}
 \widehat\tau_k(N,\alpha) := \frac{\tau_k(N,\alpha)}{\vol(N,\alpha)^{1/2}}.
\end{align*}
If $(N,\alpha)$ is Besse, we denote by $k_1:=k_1(N,\alpha)$ the minimal positive integer such that $\tau_{k_1}$ is a common period of the Reeb orbits, i.e.~$\phi^{\tau_{k_1}}=\id$. This notation is consistent with the one employed in Section~\ref{ss:result} with 4-dimensional rational ellipsoids $E$: we have $k_1(E)=k_1(\partial E,\alpha)$. 
A result of Abbondandolo, Lange, and Mazzucchelli \cite{Abbondandolo:2022aa} generalized the above mentioned systolic characterization of Zoll contact 3-manifolds as follows: on the space of contact forms on a closed 3-manifold $N$, endowed with the $C^3$ topology, the local maximizers of the function $\alpha\mapsto\widehat\tau_k(N,\alpha)$ are precisely the Besse contact forms $\alpha$ with $k_1(N,\alpha)=k$.

Consider now a $2n$-dimensional smooth star-shaped domain $B$. Stokes theorem  implies that $\vol(B,\omega)=\vol(\partial B,\lambda)$. Moreover, if $B$ is a smooth convex body and $c_1$ denotes any of the above mentioned 1-capacities $\cEH 1$, $\cV 1$, or $\cGH 1$, we have $c_1(B)=\tau_1(\partial B,\lambda)$,  
as it was proven by Sikorav \cite{Sikorav:1990aa}, Viterbo \cite{Viterbo:1989aa}, and Abbondandolo and Kang \cite{Abbondandolo:2022ab} for the three respective capacities. Therefore $\chat_1(B)=\widehat\tau_1(\partial B,\lambda)$, and the systolic Zoll characterization implies the local Viterbo conjecture.

The proof of Theorem~\ref{mt:local_max} employs the Clarke action functional associated with a smooth convex body $B$, whose set of critical values is precisely the action spectrum $\sigma(\partial B)$, that is, the set of periods of the closed Reeb orbits of $\partial B$. Ekeland and Hofer introduced spectral invariants $s_k(B)$, which are critical values of the Clarke action functional selected by suitable min-max procedures. A result of Ginzburg, G\"urel, and Mazzucchelli \cite{Ginzburg:2021aa} asserts that a $2n$-dimensional smooth convex body $B_0$ has Besse boundary if and only if $s_k(B_0)=s_{k+n-1}(B_0)$ for some $k\geq1$.  As a first crucial ingredient for the proof of Theorem~\ref{mt:local_max}, we show that, for any smooth convex body $B_1$ sufficiently $C^2$-close to such a  $B_0$, the spectral invariant $s_{k_1}(B_1)$ is the minimum among the element in the action spectrum $\sigma(\partial B_1)$ that are close to $s_{k_1}(B_0)$ (Lemma~\ref{l:perturbation}).

Assume now that $B_0$ as above has dimension $2n=4$.
We denote by $K(B_0)$ the collection of  values $k$ such that $s_k(B_0)=s_{k+1}(B_0)$, and denote by $k_m=k_m(B_0)$ the $m$-th smallest element of $K(B_0)$. Once again, this notation agrees with the one employed in Section~\ref{ss:result} for the rational ellipsoids. For each $m\geq1$, Lemma~\ref{l:perturbation} together with some Morse theory for the Clarke action functional implies that $s_{k_m}(B_1)\leq m\tau_{k_1}(B_1)$ for all smooth convex bodies $B_1$ sufficiently $C^2$-close to $B_0$, and equality holds if $B_1=B_0$ (Lemma~\ref{l:s_k=tau_k}). Therefore, by the above mentioned result of Abbondandolo, Lange, and Mazzucchelli \cite{Abbondandolo:2022aa}, we conclude in Theorem~\ref{t:main_Clarke} that any such $B_0$ is a local maximizer, with respect to the $C^3$ topology, of the spectral ratios
\begin{align*}
 \widehat s_{k_m}(B):=\frac{s_{k_m}(B)}{\vol(B)^{1/2}}.
\end{align*}

It is well known that $s_1(B)=\tau_1(\partial B,\lambda)=\cEH 1(B)$ for all $2n$-dimensional smooth convex bodies $B$.
Conjecturally, $s_k(B)=\cEH k(B)$ for all $k\geq2$ as well. We shall show that at least the inequality $\cEH k(B)\leq s_k(B)$ holds for all $k\geq2$ (Proposition~\ref{p:capacity<spectral}). Moreover, the equality $\cEH k(B)= s_k(B)$ holds for all $k\geq2$ and for all 4-dimensional Besse convex bodies, using the already mentioned fact that every such $B$ is symplectomorphic to a rational ellipsoid. By employing these facts together with Theorem~\ref{t:main_Clarke}, we infer that the Besse $4$-dimensional  smooth convex bodies $B_0$ with associated integers $k_m=k_m(B_0)$ are local maximizers, with respect to the $C^3$ topology, of the capacity ratios $\chatEH k$.

Finally, building on a well-known argument originally due to Alvarez Paiva and Balacheff \cite{Alvarez-Paiva:2014aa}, we show that any local maximizer $B$ of the capacity ratios $\chatEH k$ over the space of $2n$-dimensional smooth star-shaped domains must have Besse boundary, and the Reeb orbits therein must have common period $c_k(B)$. In particular every such $B$ is symplectomorphic to a rational ellipsoid $E$ that is a local maximizer of $\chatEH k$, and our sharp result for ellipsoids (Proposition~\ref{mp:ellipsoids}(i)) implies that $k\in K(E)$.

\subsection{Organization of the paper}
In Section~\ref{s:Clarke}, after recalling the general properties of the Clarke action functional, we prove that Besse convex bodies are local maximizers of suitable spectral ratios (Theorem~\ref{t:main_Clarke}). In Section~\ref{s:EH} we prove that the Ekeland-Hofer capacities of smooth convex bodies are bounded from above by the corresponding spectral invariants of the Clarke action functional (Proposition~\ref{p:capacity<spectral}), and employ this result and Theorem~\ref{t:main_Clarke} to prove Theorem~\ref{mt:local_max}. The proof of Proposition~\ref{p:capacity<spectral} requires some subtle properties of the Fadell-Rabinowitz index that are either stated under slightly different assumptions or not explicitly stated in the literature, and we included the details in Appendix~\ref{a:Fadell_Rabinowitz}. In Section~\ref{s:ellipsoids} we prove Proposition~\ref{mp:ellipsoids}. Finally, in Section~\ref{s:toric}, we prove Proposition~\ref{mp:toric}.

\section{The Clarke action functional}\label{s:Clarke}

\subsection{Smooth convex bodies}
\label{ss:convex_contact_spheres}
Let $B\subset\C^n$ be a $2n$-dimensional smooth convex body, and as usual we assume that $B$ contains the origin in its interior. We consider the Hamiltonian $h:\C^2\to[0,\infty)$ that is 2-homogeneous and is identically equal to 1 on the boundary $\partial B$, i.e.
\begin{align*}
 h(cz)=c^2,\qquad\forall c\geq0,\ z\in\partial B.
\end{align*}
Its Hamiltonian vector field is defined as usual by $\omega(X_h,\cdot)=dh$, where $\omega=d\lambda$ is the standard symplectic form of $\C^{n}$, and $\lambda$ is its primitive~\eqref{e:Liouville}. The Hamiltonian flow $\phi_h^t$ is 1-homogeneous, i.e.\
$\phi_h^t(cz)=c\,\phi_h^t(z)$,
and its restriction $\phi_h^t|_{\partial B}$ coincides with the Reeb flow of $\partial B$ (with respect to the canonical contact form $\lambda|_{\partial B}$). In particular, there is a one-to-one correspondence between closed Reeb orbits of $\partial B$ and cylinders of periodic orbits of $\phi_h^t$. The action spectrum $\sigma(\partial B)$ is the set of periods of the closed Reeb orbits, i.e.
\begin{align*}
 \sigma(\partial B)=\big\{ t>0\ \big|\ \fix(\phi_h^t)\neq\varnothing \big\}.
\end{align*}

Since the Hamiltonian  $h$ has positive definite Hessian everywhere outside the origin, it admits a Legendre dual  $h^*:\C^n\to[0,\infty)$ given by
\begin{align}
\label{e:dual_Hamiltonian}
 h^*(w)=\max_{z\in\C^n} \Big( \langle w,z\rangle - h(z)\Big),
\end{align}
which is also 2-homogeneous.
We set $S^1=\R/\Z$, and consider the Hilbert space
\[
L^2_0(S^1,\C^n) 
=
\left\{ 
u\in L^2(S^1,\C^n)\ 
\ \left|\ 
\int_{S^1}u(t)\,dt=0 
\right.\right\}. \]
Its elements are precisely the maps of the form $u=\dot\gamma$, where $\gamma\in W^{1,2}(S^1,\C^n)$.
We consider the functionals $\AAA:L^2_0(S^1,\C^n)\to\R$ and $\HH:L^2_0(S^1,\C^n)\to[0,\infty)$, given by
\begin{align*}
 \AAA(\dot\gamma)=\frac12\int_{S^1} \langle J\gamma(t),\dot\gamma(t) \rangle\,dt,
 \qquad
 \HH(\dot\gamma)=\int_{S^1} h^*(-J\dot\gamma(t))\,dt.
\end{align*}
Notice that the expression of $\AAA$ involves a primitive of $\dot\gamma$, but is actually independent of the choice of such a primitive. The functional $\AAA$ is a non-degenerate quadratic form, and we consider its positive open cone $\AAA^{-1}(0,\infty)$.
The circle $S^1$ acts on it by translation, i.e.~$t\cdot u=u(t+\cdot)$ for all $t\in S^1$ and $u\in\AAA^{-1}(0,\infty)$, and both $\AAA$ and $\HH$ are $S^1$-invariant. 

The \emph{Clarke action functional} $\widetilde\Psi:\AAA^{-1}(0,\infty)\to(0,\infty)$ is defined by
\begin{align*}
 \widetilde\Psi(u)=\frac{\HH(u)}{\AAA(u)}.
\end{align*}
Let us summarize its properties (and refer the reader to Ekeland-Hofer's \cite{Ekeland:1987aa} for a proof of the non-trivial ones):
\begin{itemize}
\setlength\itemsep{5pt}

\item \textbf{($\bm{\C_*}$-invariance)} It is $0$-homogeneous and $S^1$-invariant, i.e.\ $\widetilde\Psi(cu)=\widetilde\Psi(u)=\widetilde\Psi(t\cdot u)$ for all $c>0$, $t\in S^1$, and $u\in\AAA^{-1}(0,\infty)$. Overall, $\widetilde\Psi$ is $\C_*$-invariant, where $\C_*=\C\setminus\{0\}\equiv(0,\infty)\times S^1$.

\item \textbf{(Regularity)} It has the same regularity as $\HH$: it is $C^{1,1}$ and admits a Hessian in the sense of Gateaux at every point.

\item \textbf{(Clarke variational principle)} There is a one-to-one correspondence 
\begin{align*}
\crit(\widetilde\Psi)\cap\widetilde\Psi^{-1}(\tau)&\ \bijection^{1:1}\ \fix(\phi_h^\tau)\\
u_z&\ \bijection\  
z
\end{align*}
where
$u_z(t)=\tfrac d{dt} \phi_H^{\tau t}(z)$ and, conversely, $z=\tau^{-1}\nabla h^*(-Ju_z(0))$.

\item \textbf{(Morse indices)} The Morse index and nullity\footnote{Here, we keep into account the $\C_*$-invariance of $\widetilde\Psi$: the nullity of a critical point $u$ of $\widetilde\Psi$ is defined as $\dim\ker(d^2\widetilde\Psi(u))-2$.} of any critical point $u_z\in\crit(\widetilde\Psi)\cap\widetilde\Psi^{-1}(\tau)$ are finite. Indeed, the Morse index is equal (up to a conventional additive constant) to the Maslov index of the associated $\tau$-periodic orbit $t\mapsto\phi_h^t(z)$, and the nullity is equal to $\dim\ker(d\phi_h^\tau(z)-I)-2$.

\item \textbf{(Palais-Smale condition)} Being $0$-homogeneous, $\widetilde\Psi$ cannot satisfy the Palais-Smale condition. Nevertheless, let $V$ be the rescaled version of the anti-gradient of $\widetilde\Psi$ given by
\begin{align*}
 V(u):=-\|u\|_{L^2}\nabla\widetilde\Psi(u),
\end{align*}
whose flow $g_t:\AAA^{-1}(0,\infty)\to\AAA^{-1}(0,\infty)$ is $1$-homogeneous and preserves the $L^2$ norm, i.e.\ $\tfrac d{dt}\|g_t(u)\|_{L^2}=0$. On an $L^2$-sphere of any radius $r>0$, the Clarke action functional $\widetilde\Psi$ satisfies the Palais-Smale condition with respect to $V$: any sequence $u_k\in\AAA^{-1}(0,\infty)$ such that $V(u_k)\to0$, $\|u_k\|_{L^2}=r$, and $\sup\widetilde\Psi(u_k)<\infty$ admits a converging subsequence.

\end{itemize}

Due to its $0$-homogeneity, in the literature the Clarke action functional $\widetilde\Psi$ usually appears restricted to $S^1$-invariant hypersurfaces of $\AAA^{-1}(0,\infty)$ transverse to the radial directions: for instance $\AAA^{-1}(1)$ or $\HH^{-1}(1)$. In this paper, we equivalently restrict $\widetilde\Psi$ to the smooth hypersurface 
$\Lambda=\big\{ u\in\AAA^{-1}(0,\infty)\ \big|\ \|u\|_{L^2}=1  \big\}$,
and denote the restriction by \[\Psi:=\widetilde\Psi|_\Lambda.\] We still call $\Psi$ the Clarke action functional. 
Clearly, $\Psi$ is $S^1$-invariant, and we have $S^1\cdot u\subset\crit(\Psi)\cap\Psi^{-1}(\tau)$ if and only if $\C_*\cdot u\subset\crit(\widetilde\Psi)\cap\widetilde\Psi^{-1}(\tau)$. 
Overall, $\Psi$ satisfies the classical properties required by $S^1$-equivariant Morse theory, except for its lack of $C^2$ regularity, which nevertheless will be easily circumvented later on.

\subsection{Spectral invariants}\label{ss:Clarke_spectral_invariants}
In their seminal work \cite{Ekeland:1987aa}, Ekeland and Hofer introduced and studied spectral invariants for the Clarke action functional. We briefly recall the construction.

Consider the subspace $\Lambda_+$ given by all $u\in\Lambda$  having the form
\begin{align*}
 u(t)=\sum_{k=1}^\infty e^{i2\pi k t}u_k,
\end{align*}
for some $u_k\in\C^n$. Notice that $\Lambda_+$ is the unit sphere of an infinite dimensional Hilbert subspace of $L^2_0(S^1,\C^n)$, and is $S^1$-invariant. Since $\Lambda_+$ is contractible, the  Gysin sequence of the $S^1$-bundle $\Lambda_+\times ES^1\to \Lambda_+\times_{S^1} ES^1$ readily implies that its $S^1$-equivariant cohomology with rational coefficients is given by $H^*_{S^1}(\Lambda_+;\Q)=\Q[e]$, where $e$ is a generator of $H^2_{S^1}(\Lambda_+;\Q)\cong\Q$. From now on, all cohomology rings will be assume to have rational coefficients, and we will suppress $\Q$ from the notation. The inclusion $\Lambda_+\hookrightarrow\Lambda$ is an $S^1$-equivariant homotopy equivalence, and therefore induces a ring isomorphism in $S^1$-equivariant cohomology. Summing up, we have
\[H^*_{S^1}(\Lambda)=\Q[e],\]
where $e$ now denotes a generator of $H^2_{S^1}(\Lambda)\cong\Q$. In particular, $e^k\neq0$ in $H^*_{S^1}(\Lambda)$ for all $k\geq0$; in terms of the Fadell-Rabinowitz index (see Appendix~\ref{a:Fadell_Rabinowitz}), this means $\FR(\Lambda)=\infty$.

For each $k\geq 1$, the \emph{$k$-th spectral invariant} of the smooth convex body $B$ is defined by 
\begin{align*}
 s_k(B) 
 := 
 \inf\big\{c>0\ \big|\ \FR(\{\Psi<c\})\geq k\big\},
\end{align*}
where $\{\Psi<\tau\}$ denotes the sublevel set $\Psi^{-1}(0,\tau)$. 
We recall some classical properties of these values:
\begin{itemize}
\setlength\itemsep{5pt}

\item \textbf{(Spectrality)} Every $s_k(B)$ is a critical value of the Clarke action functional associated with $B$, that is, an element of the action spectrum $\sigma(\partial B)$.

\item \textbf{(Systole)} $s_1(B)=\min\Psi=\sigma(\partial B)$.

\item \textbf{(Lusternik-Schnirelmann)} For all $k\geq1$ we have $s_k(B)\leq s_{k+1}(B)$. If $s_k(B)=s_{k+j}(B)$ for some $j\geq1$, then $\FR(\UU)>j$ for any $S^1$-invariant neighborhood $\UU\subset\Lambda$ of $\crit(\Psi)\cap\Psi^{-1}(s_k(B))$.

\item \textbf{($\bm{C^0}$ continuity)} For any sequence of smooth convex bodies $B_j$ converging to a smooth convex body $B$ in the $C^0$ topology, we have $s_k(B_j)\to s_k(B)$.

\end{itemize}

\subsection{Finite dimensional reduction}\label{ss:reduction}

The $C^{1,1}$ regularity of the Clarke action functional $\Psi$ is not sufficient to apply those results of Morse theory involving the Hessian, such as the Morse lemma. Nevertheless, $\Psi$ becomes smooth as soon as one restricts it to the subspace of $W^{1,2}$ curves that do not go through the origin of $\C^n$. With such restriction, however, one looses compactness properties such as the Palais-Smale condition. In order to both improve the regularity of $\Psi$ and retain the compactness properties, we will apply the finite dimensional reduction introduced by Ekeland and Hofer, which we now briefly recall. We refer the reader to \cite[Sec.~II.2]{Ekeland:1987aa} for the proofs.

Let $b>\min\Psi$ be a fixed value, and let us focus on the sublevel set $\{\Psi<b\}$ of the Clarke action functional. For each integer $N\geq1$, let $F=F_N\subset L^2_0(S^1,\C^n)$ be the $S^1$-invariant finite dimensional Hilbert subspace given by those $u\in L^2_0(S^1,\C^n)$ of the form
\begin{align*}
 u(t)=\sum_{0<|k|\leq N} e^{i2\pi kt}u_k,
\end{align*}
where $u_k\in\C^n$. We consider the open subset 
$V=\{u\in F\ |\ W_u\neq\varnothing\}$, where
\begin{align*}
 W_u:=\big\{ v\in F^\bot\ \big|\ \A(u+v)>0,\  \widetilde\Psi(u+v)<b \big\}.
\end{align*}
The function $W_u\to(0,\infty)$, $v\mapsto\widetilde\Psi(u+v)$ has a unique non-degenerate global minimizer $\nu(u)\in F^\bot$. We denote by $S\subset F$ the unit sphere of the Hilbert space $F$, and set $U:=V\cap S$. The \emph{reduced Clarke action functional} is defined as
\begin{align*}
 \psi:U\to(0,b),\qquad\psi(u)=\widetilde\Psi(u+\nu(u))=\Psi(\iota(u)),
\end{align*}
where 
\begin{align*}
\iota:U\hookrightarrow\Lambda,\qquad \iota(u):=\frac{u+\nu(u)}{\|u+\nu(u)\|_{L^2}}.
\end{align*}
The properties of this finite dimensional setting are the following.

\begin{itemize}
\setlength\itemsep{5pt}

\item \textbf{($\bm{S^1}$-invariance)} The map $\nu$ is $\C_*$-equivariant, and therefore the  reduced Clarke action functional $\psi$ is $S^1$-invariant.

\item \textbf{(Critical points)} Both $\nu$ and $\psi$ are $C^{1,1}$, and there is a one-to-one correspondence
\begin{align*}
\crit(\psi)&\ \bijection^{1:1}\ \crit(\Psi)\cap\Psi^{-1}(0,b)\\
u&\ \bijection\  
\iota(u).
\end{align*}
Both $\nu$, $\psi$, and the $S^1$ action are smooth on a sufficiently small neighborhood of the critical set $\crit(\psi)$.
Moreover, at every $u\in\crit(\psi)$, the Hessians $d^2\psi(u)$ and $d^2\Psi(\iota(u))$ have the same Morse index and the same nullity.

\item \textbf{(Compactness)} 
For each $c\in(0,b)$, the closed sublevel set  $\{\psi\leq c\}$ is compact.

\item \textbf{(Approximation)} For each $a\in(0,b)$, the map $\iota:\{\psi<a\}\hookrightarrow\{\Psi<a\}$ is an $S^1$-equivariant homotopy equivalence.

\end{itemize}

\subsection{Besse convex bodies}\label{ss:Besse}

We say that a $2n$-dimensional smooth convex body $B$ is \emph{Besse} when its boundary $\partial B$, equipped with the canonical contact form $\lambda|_{\partial B}$ of Equation \eqref{e:Liouville}, is a Besse contact manifold: all its Reeb orbits are closed, and therefore have a common period according to a theorem of Wadsley \cite{Wadsley:1975aa}. A theorem of Ginzburg, G\"urel, and Maz\-zuc\-chel\-li \cite{Ginzburg:2021aa} implies that the Besse property is detected by the spectral invariants: 

\begin{itemize}
 \item \textbf{(Spectral characterization)} We have $c:=s_k(B)=s_{k+n-1}(B)$ for some $k\geq1$ if and only if $B$ is Besse, and $c$ is a common period for the Reeb orbits on $\partial B$ (i.e.~$\phi_h^c=\id$, where $h:\C^n\to[0,\infty)$ is the 2-homogeneous Hamiltonian such that $h^{-1}(1)=\partial B$). In this case, the critical manifold $\crit(\Psi)\cap\Psi^{-1}(c)$ has Morse index $2k-2$ and nullity $2n-2$. Moreover, $c<s_{k+n}(B)$ and, when $k\geq2$, $c>s_{k-1}(B)$.
\end{itemize}

The Clarke action functional $\Psi$ associated with a Besse convex body $B$ also satisfies the following properties, which were established by Mazzucchelli and Radeschi \cite{Mazzucchelli:2023aa}. Analogous properties in the different setting of geodesic flows were established earlier by Radeschi and Wilkings \cite{Radeschi:2017aa}.

\begin{itemize}
\setlength\itemsep{5pt}

\item \textbf{(Critical manifolds)} For each $c\in\sigma(\partial B)$, the critical manifold 
\[\KK:=\crit(\Psi)\cap\Psi^{-1}(c)\cong \fix(\phi_h^c|_{\partial B})\] is a non-degenerate (i.e.\ $\ker d^2\Psi(u)=T_u\KK$ for all $u\in \KK$), connected, odd dimensional, rational homology sphere.

\item \textbf{(Perfectness)} The Clarke action functional $\Psi$ is perfect for the $S^1$-equi\-variant Morse theory with rational coefficients: for each $a<b$ the inclusions induce short exact sequences
\begin{align*}
\qquad\qquad
 0
 \to
 H^{*}_{S^1}(\{\Psi<b\},\{\Psi<a\})
 \to 
 H^{*}_{S^1}(\{\Psi<b\})
 \to 
 H^{*}_{S^1}(\{\Psi<a\})
 \to 
 0.
\end{align*}

\end{itemize}

\begin{Remark}
\label{r:s_k_ellipsoids}
For a rational ellipsoid $E$ (which is a Besse convex body) the perfectness of the associated Clarke action functional, together with the Lusternik-Schnirelmann property, readily implies that $s_k(E)=\cEH k(E)$ for all $k\geq 1$. 
Since $\aaa\mapsto s_k(E(\aaa))$ and $\aaa\mapsto \cEH k(E(\aaa))$ are continuous, the latter identity holds for irrational ellipsoids as well.
\hfill\qed
\end{Remark}

In order to prove our main Theorem~\ref{mt:local_max}, we shall prove analogous statements for the spectral invariants $s_k(B)$: Theorems \ref{t:main_Clarke} and, ultimately, \ref{t:final_Clarke}. A crucial ingredient for these statements is the following perturbation result for the spectral invariants. We recall that two $2n$-dimensional smooth convex bodies $B_0$ and $B_1$ are $C^k$-close if their boundaries are $C^k$-close embedded hypersurfaces of $\C^n$. 

\begin{Lemma}
\label{l:perturbation}
Let $B_0$ be a $2n$-dimensional Besse convex body, $k\geq1$ an integer such that $s_k(B_0)=s_{k+n-1}(B_0)$, and $[a,b]$ a compact neighborhood of $s_k(B_0)$ such that $[a,b]\cap\sigma(\partial B_0)=s_k(B_0)$. Then, for any smooth convex body $B_1$ that is sufficiently $C^2$-close to $B_0$, we have
\begin{align*}
 s_k(B_1)=\min \big(\sigma(\partial B_1)\cap[a,b]\big).
\end{align*}
\end{Lemma}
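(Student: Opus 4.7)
The plan is to reduce, via the finite-dimensional reduction of Section~\ref{ss:reduction}, to a Morse--Bott perturbation analysis of the Clarke action functional near the critical manifold of $\Psi_0$, and then to read off the cohomological jump of the Fadell--Rabinowitz index at the smallest critical value that has bifurcated from $c_0 := s_k(B_0)$. As a first step, write $c_0 = s_k(B_0) = s_{k+n-1}(B_0)$. By the spectral characterization of Besse convex bodies recalled in the excerpt, $c_0 < s_{k+n}(B_0)$ and, for $k\geq 2$, $s_{k-1}(B_0) < c_0$; since spectral invariants lie in $\sigma(\partial B_0)$ and $[a,b]\cap\sigma(\partial B_0) = \{c_0\}$, these inequalities upgrade to $s_{k-1}(B_0) < a$ and $s_{k+n}(B_0) > b$. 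By $C^0$-continuity of the spectral invariants, for $B_1$ sufficiently $C^0$-close to $B_0$ the same inequalities hold with $B_1$ in place of $B_0$, while $s_k(B_1),\dots,s_{k+n-1}(B_1) \in [a,b]$. In particular $\FR(\{\Psi_1 < a\}) \leq k-1$ and $\FR(\{\Psi_1 < b\}) \geq k+n-1$, and by spectrality $s_k(B_1) \in \sigma(\partial B_1) \cap [a,b]$, so $s_k(B_1) \geq \tau_* := \min\bigl(\sigma(\partial B_1) \cap [a,b]\bigr)$. It remains to prove $s_k(B_1) \leq \tau_*$.

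Next, I would choose $N$ large enough for the threshold $b$ so that the reduced functionals $\psi_0,\psi_1 : U \to (0,b+\eta)$ are defined. The $C^2$-closeness of the boundaries, combined with the continuity of the Legendre transform on the 2-homogeneous Hamiltonians, yields $C^2$-closeness of $\psi_1$ and $\psi_0$ on the compact set $\{\psi_0 \leq b\}$, where these functionals are smooth near the critical set. Set $\KK_0^{\mathrm{red}} := \iota_0^{-1}\bigl(\crit(\Psi_0)\cap\Psi_0^{-1}(c_0)\bigr)$. By the spectral characterization of Besse, $\KK_0^{\mathrm{red}}$ is the unique critical submanifold of $\psi_0$ in $\psi_0^{-1}[a,b]$, it is non-degenerate with Morse index $2k-2$ and nullity $2n-2$, and it is $S^1$-equivariantly diffeomorphic to $(\partial B_0,\mathrm{Hopf}\ S^1) \cong (S^{2n-1},\mathrm{Hopf})$. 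The Palais--Smale property on $\{\psi_0\leq b\}$ then forces, for $B_1$ sufficiently $C^2$-close to $B_0$, every critical point of $\psi_1$ in $\psi_1^{-1}[a,b]$ to lie in an arbitrarily small $S^1$-tubular neighborhood $\UU$ of $\KK_0^{\mathrm{red}}$. Applying the $S^1$-equivariant Morse--Bott lemma to $\psi_0$ near $\KK_0^{\mathrm{red}}$ and the implicit function theorem to the normal direction of $\psi_1$, I would produce an $S^1$-invariant $C^2$ function $f_1 : \KK_0^{\mathrm{red}} \to \R$, $C^2$-close to the constant $c_0$, whose critical orbits correspond bijectively (with matching critical values) to those of $\psi_1$ in $\UU$. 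In particular $\tau_*$ equals $\min f_1$, attained on a non-empty $S^1$-invariant minimum set $\KK_*^{\min} \subset \KK_0^{\mathrm{red}}$, which descends to a non-empty compact $\overline{\KK}_*^{\min} \subset \KK_0^{\mathrm{red}}/S^1 \cong \C P^{n-1}$.

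Finally, I would show $\FR(\{\psi_1 < \tau_* + \varepsilon\}) \geq k$ for every $\varepsilon > 0$, which together with the spectrality step gives $s_k(B_1) = \tau_*$. Since $\psi_1$ has no critical values in $[a,\tau_*)$, Palais--Smale gradient-flow deformations produce $S^1$-equivariant homotopy equivalences $\{\psi_1 < \tau_*\} \simeq \{\psi_1 \leq a\} \simeq \{\psi_0 \leq a\}$, so $\FR(\{\psi_1<\tau_*\}) \leq k-1$. The Morse--Bott attachment at $\tau_*$, together with the Thom isomorphism for the rank-$(2k-2)$ negative normal bundle over $\KK_*^{\min}$ (on which $S^1$ acts freely, via the Hopf action inherited from $\KK_0^{\mathrm{red}}$), yields
\[
 H^{2k-2}_{S^1}\bigl(\{\psi_1<\tau_*+\varepsilon\},\{\psi_1<\tau_*\}\bigr) \;\cong\; H^0(\overline{\KK}_*^{\min};\Q) \;\neq\; 0,
\]
so a genuinely new class appears in $H^{2k-2}_{S^1}(\{\psi_1<\tau_*+\varepsilon\})$. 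The central step is to identify this new class with the image of $e^{k-1} \in H^{2k-2}_{S^1}(\Lambda)$; this I would extract from the fact that, by the perfectness of $\Psi_0$, $H^*_{S^1}(\{\Psi_0<b\})$ is the cyclic $\Q[e]$-module $\Q[e]/(e^{k+n-1})$, and by $C^0$-stability of sublevel cohomology under the perturbation the same structure is transported to $\{\psi_1<b\}$; chasing the long exact sequence of the pair with the $\Q[e]$-action then forces the new class in degree $2k-2$ to be $e^{k-1}$ up to a non-zero scalar. The hard part will precisely be this cohomological identification: while Lusternik--Schnirelmann sub-additivity gives only the upper bound on the jump of $\FR$ at $\tau_*$, the lower bound must be extracted from the cyclic $\Q[e]$-module structure inherited from the perfect Besse model, ruling out that the Morse--Bott attachment at the \emph{smallest} perturbed level creates a class lying in the socle without contributing a new power of $e$.
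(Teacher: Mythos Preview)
Your first two paragraphs are correct and match the paper's setup: continuity plus spectrality gives $s_k(B_1)\geq\tau_*$, and the finite-dimensional reduction together with the $S^1$-equivariant Morse--Bott chart for $\psi_0$ around $\KK_0^{\mathrm{red}}$ is exactly the right framework. The gap is in your third paragraph, and it is twofold. First, the claimed Thom isomorphism $H^{2k-2}_{S^1}\bigl(\{\psi_1<\tau_*+\varepsilon\},\{\psi_1<\tau_*\}\bigr)\cong H^0(\overline{\KK}_*^{\min};\Q)$ is unjustified: $B_1$ is an \emph{arbitrary} $C^2$-close convex body, so $\psi_1$ need not be Morse--Bott at $\tau_*$ and $\KK_*^{\min}$ need not be a submanifold. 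Second, and this is the point you yourself flag, even if you produce a nonzero class in $H^{2k-2}_{S^1}(\{\psi_1\leq\tau_*\})$, nothing in your outline forces it to be the restriction of $e^{k-1}$. From the vanishing $H^{2k-2}_{S^1}(\{\psi_1<\tau_*\})=H^{2k-3}_{S^1}(\{\psi_1<\tau_*\})=0$ (which does follow from perfectness of $\Psi_0$ and sublevel sandwiching) you only get $H^{2k-2}_{S^1}(\{\psi_1\leq\tau_*\})\cong H^{2k-2}_{S^1}(\mathrm{rel})$; but nonvanishing of this group does not show that the restriction $H^{2k-2}_{S^1}(\Lambda)\to H^{2k-2}_{S^1}(\{\psi_1\leq\tau_*\})$ is nonzero, and the $\Q[e]$-module sketch does not rule out that the attached class lies outside its image.

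The paper closes the gap with a direct geometric argument that fits into the framework you have already built and bypasses any computation of the local cohomology of $\psi_1$. In the Morse--Bott chart $D\cong D^+(r_+)\oplus D^-(r_-)$ for $\psi_0$, pick a \emph{single} critical point $p=(u;x,y)$ of $\psi_1$ at level $c_1=\tau_*$. The key observation is that $C^2$-closeness makes $\partial^2_{yy}\psi_1$ negative definite on all of $D$, so $\psi_1$ restricted to the affine slice $\{(u;x)\}\times D_u^-$ is strictly concave with its unique maximum at $y$; hence this entire negative disk lands in $\{\psi_1\leq c_1\}$, while its boundary lands in $\{\psi_1<(1-\epsilon)c_0\}\subset\{\psi_0<c_0\}$. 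Perfectness of $\Psi_0$ makes the evaluation $\iota_{0,0}^*:H^{2k-2}_{S^1}(\Lambda,\{\Psi_0<c_0\})\to H^{2k-2}_{S^1}(S^1\times D_u^-,S^1\times\partial D_u^-)$ an isomorphism, and $\iota_{1,x}^*=\iota_{0,0}^*$ by homotopy invariance. Since $\iota_{1,x}$ now factors through $\bigl(\{\Psi_1\leq c_1\},\{\Psi_1<(1-\epsilon)c_0\}\bigr)$, a short diagram chase yields $e^{k-1}|_{\{\Psi_1\leq c_1\}}\neq 0$, i.e.\ $s_k(B_1)\leq c_1$. No structure on the full critical set of $\psi_1$ at $\tau_*$ is ever used---only concavity in the $y$-direction and the existence of one critical point.
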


\begin{proof}[Proof of Lemma~\ref{l:perturbation}]
The statement is straightforward when $k=1$. Indeed, $B\mapsto s_k(B)$ is a continuous function with respect to the $C^0$ topology on the space of smooth convex bodies, and $s_1(B)=\min\sigma(\partial B)$ for each smooth convex body $B$. From here on, we shall assume $k\geq2$.

Let $h_0:\C^n\to[0,\infty)$ be the 2-homogeneous Hamiltonian such that $h_0^{-1}(1)=\partial B_0$, and 
$\Psi_0:\Lambda\to(0,\infty)$ the corresponding Clarke action functional. Consider the critical value 
$c:=s_k(B_0)=s_{k+n-1}(B_0)$. The spectral Besse characterization implies that the critical manifold $\KK:=\crit(\Psi_0)\cap\Psi_0^{-1}(c)$ is diffeomorphic to $\partial B_0$, non-degenerate, and with Morse index $d:=2k-2$.

Let $[a,b]$ be a compact neighborhood of $c$ such that $[a,b]\cap\sigma(\partial B_0)=c$, and consider the reduced Clarke action functional $\psi_0=\Psi_0\circ\iota_0:U_0\to(0,\infty)$, where the reduction is chosen so that $\psi_0$ approximates $\Psi_0$ up to a level larger than $b$  (see Section~\ref{ss:reduction}). We denote by $K:=\iota_0^{-1}(\KK)=\crit(\psi_0)\cap\psi_0^{-1}(c)$ the corresponding critical manifold of $\psi_0$, and recall that $K$ and $\KK$ have the same Morse index $d$. The normal bundle $N\subset TU_0|_K$ of $K$ admits an $S^1$-invariant splitting $N= N^+\oplus N^-$ such that $\pm d^2\psi_0(u)$ is positive definite on $N_u^\pm$ for each $u\in K$. The rank of $N^-$ is the Morse index $d$.
For each $u\in K$ and $r>0$, we denote by $D^\pm_u(r)\subset N^\pm_u$ the closed ball of radius $r$ centered at the origin, measured with the Riemannian metric of $U_0$. The union $D^\pm (r):=\cup_{u\in K}D^\pm_u(r)$ is the closed neighborhood of radius $r$ of the zero-section. By the Morse-Bott lemma, there exists a closed $S^1$-invariant neighborhood $D\subset U_0$ of $K$ small enough so that $\psi_0|_{D}$ is smooth, and an $S^1$-equivariant diffeomorphism identifying $D\equiv D^+(r_+)\oplus D^-(r_-)$, for some $r_->r_+>0$, so that $K$ is identified with the zero-section and
\[\psi_0|_D(u;x,y)=c+\|x\|^2-\|y\|^2,\qquad\forall u\in K,\ x\in D^+_u(r_+),\ y\in D^-_u(r_-).\]
In these local coordinates, we have $\partial^2_{yy}\psi_0|_D\equiv-I$. We require the radius $r_+$ to be small enough so that 
\begin{align*}
\max_{D}\psi_0=c+r_+^2<b.
\end{align*}
From now on, in order to simplify the notation, we simply write $D^\pm=D^\pm(r_\pm)$.

Since $B$ is Besse, $\Psi_0$ is perfect for the $S^1$-equivariant Morse theory with rational coefficients, and so is $\psi_0$ thanks to its approximation property (see Section~\ref{ss:reduction}). Therefore, the embedding $\iota_0:U_0\to\Lambda$ induces an isomomorphism
\begin{align*}
\iota_0^*: H^{d}_{S^1}(\Lambda,\{\Psi_0<c\})
 \ttoup^{\cong}
 H^{d}_{S^1}(D^-,\partial D^-).
\end{align*}
Since $K$ is simply connected, the negative normal bundle $N^-\to K$ is orientable, and therefore the inclusion and Thom isomorphism give
\begin{align*}
 H^{d}_{S^1}(D^-,\partial D^-)
 \ttoup^{\cong}
 H^{d}_{S^1}(D^-|_{S^1\cdot u},\partial D^-|_{S^1\cdot u})
 \cong 
 H^{0}_{S^1}(S^1\cdot u)
 \cong
 \Q,
 \\
 \forall u\in K.
\end{align*}
We consider the product $S^1\times D^-_u$ equipped with the $S^1$-action inherited from the first factor. For each $u\in K$ and $x\in D^+_u$, we consider the $S^1$-equivariant map
\begin{align*}
 j_{x}:S^1\times D^-_u\to D,
 \qquad
 j_{x}(t,y)=t\cdot(u;x,y),
\end{align*}
which depends continuously on $x$.
For $x=0$, the map $j_{0}$ is an $m$-fold covering map onto $D^-|_{S^1\cdot u}$, where $m\geq1$ is the order of iteration of $u$, i.e.~the largest integer $m$ such that $\tfrac1m \cdot u=u$. 
Moreover, $\psi_0\circ j_x|_{S^1\circ\partial D_u^-}\leq c+r_+^2-r_-^2<c$.
Therefore, the compositions $\iota_{0,x}:=\iota_0\circ j_{x}$ induce the isomomorphisms
\begin{align}
\label{e:iota_u_x_isom}
 \iota_{0,x}^*=
 \iota_{0,0}^*:
 H^{d}_{S^1}(\Lambda,\{\Psi_0<c\})
 \ttoup^{\cong}
 H^{d}_{S^1}(S^1\times D^-_u,S^1\times \partial D^-_u).
\end{align}

We fix $\epsilon>0$ small enough so that 
\begin{itemize}
 \setlength\itemsep{5pt}
\item[(i)] $[(1-\epsilon)^3c,(1-\epsilon)^{-2}c]\subset[a,b]$.

\item[(ii)] $c+r_+^2-r_-^2<(1-\epsilon)^2c$.
\end{itemize}
We consider a $2$-homogeneous Hamiltonian $h_1:\C^n\to[0,\infty)$ such that
\[\delta:=\|h_1-h_0\|_{C^2(B_0\setminus\{0\})}\]
is sufficiently small. Namely, $B_1:=h_1^{-1}[0,1]$ is a convex body $C^2$-close to $B_0$. For each $s\in[0,1]$,  the convex combinations $h_s:=sh_1+(1-s)h_0$ define the convex bodies $B_s:=h_s^{-1}[0,1]$ interpolating between $B_0$ and $B_1$. 
We require $\delta\leq\epsilon$, so that
\begin{align}
\label{e:sandwich_h0_h1}
 (1-\epsilon)h_s \leq h_0 \leq (1-\epsilon)^{-1}h_s,\qquad\forall s\in[0,1],
\end{align}
We denote by $\Psi_s:\Lambda\to(0,\infty)$ the Clarke action functional associated with $h_s$, and $\psi_s=\Psi_s\circ\iota_s:U_s\to(0,\infty)$ its finite dimensional reduction. Up to choosing $\delta$ small enough, we can insure that all the domains $U_s$ are open subsets of the same finite dimensional vector subspace of $L^2_0(S^1;\C^n)$ and contain the compact set $D$.
The dual Hamiltonians $h_s^*$ satisfy  inequalities analogous to~\eqref{e:sandwich_h0_h1}, and so do the  Clarke action functionals $\Psi_s$, i.e. \begin{align}
\label{e:Psi_01}
 (1-\epsilon)\Psi_s \leq \Psi_0 \leq (1-\epsilon)^{-1}\Psi_s,\qquad\forall s\in[0,1].
\end{align}
We require $\delta$ to be small enough so that, for each $s\in[0,1]$, we have:
\begin{itemize}
 \setlength\itemsep{5pt}

 \item[(iii)] $\sigma(\partial B_s)\cap[a,b]=\sigma(\partial B_s)\cap[(1-\epsilon)c,(1-\epsilon)^{-1}c]$;

 \item[(iv)] $\psi_s|_D$ is smooth and $C^2$-close to $\psi_0$, and in particular $\psi_s|_D\leq(1-\epsilon)^{-1}\psi_0|_D$, and $\partial^2_{yy}\psi_s$ is negative definite at all points of $D$. 
 
 \item[(v)] $\crit(\psi_s)\cap\psi_s^{-1}[a,b]\subset D$.
 
\end{itemize}

The inequalities~\eqref{e:Psi_01} imply that we have the inclusions of sublevel sets
\begin{align}
\label{e:sublevel_inclusions}
\big\{ \Psi_s < (1-\epsilon)^3 c \big\}
\hookrightarrow
\big\{ \Psi_0 < (1-\epsilon)^2 c \big\}
\hookrightarrow
\big\{ \Psi_s < (1-\epsilon)c \big\}
\hookrightarrow
\big\{ \Psi_0 < c \big\}.
\end{align}
Notice that $\big[(1-\epsilon)^3c,(1-\epsilon)c\big)$ is an interval of regular values of $\Psi_s$, and $\big[(1-\epsilon)^2c,c\big)$ is an interval of regular values of $\Psi_0$, according to properties (i) and (iii). Therefore, the composition of any two subsequent inclusions in~\eqref{e:sublevel_inclusions} is an $S^1$-equivariant homotopy equivalence. This implies that each of the inclusions in~\eqref{e:sublevel_inclusions} induces an isomorphism in $S^1$-equivariant cohomology, in particular the last one:
\begin{align}
\label{e:Psi_01_iso}
 H^*_{S^1}(\{ \Psi_0 < c \})\ttoup^{\cong}H^*_{S^1}(\{ \Psi_s < (1-\epsilon)c \}).
\end{align}
The inequalities~\eqref{e:Psi_01} and property (i) further imply that we have inclusions
\begin{align*}
 \{\Psi_0\leq c\}\hookrightarrow
 \{\Psi_s\leq (1-\epsilon)^{-1}c\}\hookrightarrow
 \{\Psi_0\leq b\},
\end{align*}
and since their composition is an $S^1$-equivariant homotopy equivalence, the second inclusion induces an injective homomorphism
\begin{align}
\label{e:Psi_01_injective}
 H^*_{S^1}(\{\Psi_0\leq b\})\hookrightarrow
  H^*_{S^1}(\{\Psi_s\leq (1-\epsilon)^{-1}c\}).
\end{align}
Equations~\eqref{e:Psi_01_iso} and \eqref{e:Psi_01_injective} imply that 
\[
s_k(\partial B_1)
\in
\big[(1-\epsilon)c,(1-\epsilon)^{-1}c\big).
\]
In particular, $\big[(1-\epsilon)c,(1-\epsilon)^{-1}c\big)$ contains some critical values of the Clarke action functional $\Psi_1$, and we denote the smallest one by
\[c_1:=\min\big(\sigma(\partial B_1)\cap[a,b]\big)\in[(1-\epsilon)c,(1-\epsilon)^{-1}c].\]

For each $s\in[0,1]$ and $x\in D_u^+$, properties (ii) and (iv) imply
\begin{align*}
 \psi_s\circ j_x|_{S^1\times\partial D_u^-}
 &\leq
 (1-\epsilon)^{-1}\psi_0\circ j_x|_{S^1\times\partial D_u^-}\\
 &\leq
 (1-\epsilon)^{-1}(c+r_+^2-r_-^2)\\
 &<
 (1-\epsilon)c.
\end{align*}
Therefore, if we denote $\iota_{s,x}:=\iota_s\circ j_x$, we have
\begin{align*}
 \iota_{s,x}(S^1\times \partial D_u^-)
 \subset
 \{\Psi_s<(1-\epsilon)c\}
 \subset
 \{\Psi_0<c\}.
\end{align*}
Since $\iota_{s,x}$ depends continuously on $s$ and $x$, it induces the same isomorphism as in \eqref{e:iota_u_x_isom} in relative cohomology, i.e.
\begin{align}
\label{e:iota_u_x_isom_s}
 \iota_{s,x}^*=
 \iota_{0,0}^*:
 H^{d}_{S^1}(\Lambda,\{\Psi_0<c\})
 \ttoup^{\cong}
 H^{d}_{S^1}(S^1\times D^-_u,S^1\times \partial D^-_u).
\end{align}

We consider a critical circle $S^1\cdot p\in\crit(\psi_1)\cap\psi_1^{-1}(c_1)$. Property (v) guarantees that $S^1\cdot p\subset D$, and therefore we can write $p$ as 
\[p=(u;x,y)\in D^+\oplus D^-\equiv D.\]
Property (iv) implies that the restriction $\psi_1|_{\{x\}\oplus D_u^-}$ is a strictly concave function with a unique maximizer at $y$. Therefore 
\[\iota_{1,x}(S^1\times D_u^-)\subset\{\Psi_1\leq c_1\}.\] 
Therefore, the isomomorphism~\eqref{e:iota_u_x_isom_s} factors as
\[
\begin{tikzcd}[row sep=large]
H^{d}_{S^1}(\Lambda,\{\Psi_0<c\})
\arrow[r,"\iota_{s,x}^*","\cong"']
\arrow[d,"i^*"']
&
 H^{d}_{S^1}(S^1\times D^-_u,S^1\times \partial D^-_u)
\\
H^{d}_{S^1}(\{\Psi_1\leq c_1\},\{\Psi_1\leq (1-\epsilon)c\})
\arrow[ur,"\iota_{s,x}^*"']
\end{tikzcd}
\]
where $i^*$ is induced by the inclusion. The diagram implies that $i^*$ is injective. Finally, consider the following commutative diagram, whose rows are  parts of long exact sequences of inclusions, and whose vertical homomorphisms are induced by inclusions:
\[
\small
\begin{tikzcd}[row sep=large]
H^{d-1}_{S^1}(\{\Psi_0<c\})
\arrow[r,"\partial^*=0"]
\arrow[d,"\cong"']
&
H^{d}_{S^1}(\Lambda,\{\Psi_0<c\})
\arrow[r,"l^*",hookrightarrow]
\arrow[d,"i^*",hookrightarrow]
&
H^{d}_{S^1}(\Lambda)
\arrow[d,"j^*"]
\\
H^{d-1}_{S^1}(\{\Psi_1<(1-\epsilon)c\})
\arrow[r,"\partial^*"]
&
H^{d}_{S^1}(\{\Psi_1\leq c_1\},\{\Psi_1<(1-\epsilon)c\})
\arrow[r,"m^*"]
&
H^{d}_{S^1}(\{\Psi_1\leq c_1\})
\end{tikzcd}
\]
The cohomology class $e^{k-1}\in H^d_{S^1}(\Lambda)$ belongs to the image of $l^*$, since $c=s_k(\partial B_0)$. Let $f\in H^{d}_{S^1}(\Lambda,\{\Psi_0<c\})$ be the non-zero cohomology class such that $l^*f=e^{k-1}$. The diagram readily implies that the connecting homomorphism $\partial^*$ on the bottom line is the zero homomorphism, namely  that $m^*$ is injective. Therefore,  \[j^*e^{k-1}=m^*i^*f\neq0,\] 
which implies $s_k(\partial B_1)\leq c_1$. Since $s_k(\partial B_1)\in\big[(1-\epsilon)c,(1-\epsilon)^{-1}c\big)$, and since $c_1$ is the smaller critical value of $\Psi_1$ in $\big[(1-\epsilon)c,(1-\epsilon)^{-1}c\big)$, we conclude that $s_k(\partial B_1)= c_1$.
\end{proof}

\subsection{Local maximizers of the spectral ratio}\label{ss:local_max_sk}

From now on, we will focus on dimension 4. Let $B$ be a $4$-dimensional smooth convex body, $h:\C^2\to[0,\infty)$ the 2-homogeneous Hamiltonian such that $h^{-1}(1)=\partial B$, and $\Psi:\Lambda\to(0,\infty)$ the associated Clarke action functional. We denote by $\tau_k(B)$ the infimum of the values $\tau>0$ such that $\crit(\Psi)\cap\Psi^{-1}(0,\tau)$ contains at least $k$ critical circles. In terms of the Hamiltonian  flow $\phi_h^t$, this can be expressed as
\begin{align*}
 \tau_k(B):=\inf\bigg\{\tau>0\ \bigg|\ \sum_{t\in(0,\tau]} \#\Big(\tfrac{\fix(\phi_h^t)\setminus\{0\}}{\sim}\Big)\geq k\bigg\},
\end{align*}
where $\sim$ is the equivalence relation identifying $z\sim c\,\phi_h^t(z)$ for all $t\in\R$ and $c>0$.

Assume that $B$ is Besse. We denote by $\tau(B)$ the minimal common period of the closed Reeb orbit on $\partial B$, namely the minimal $t>0$ such that $\phi_h^t=\id$. The critical set $\crit(\Psi)\cap\Psi^{-1}(0,\tau(B))$ consists of a finite union of non-degenerate critical circles (indeed, since $B$ is symplectomorphic to an ellipsoid according to the forthcoming Proposition~\ref{p:Besse_ellipsoids}, at most two critical circles). More generally, for each integer $m\geq1$, $\crit(\Psi)\cap\Psi^{-1}(m\tau(B),(m+1)\tau(B))$ consists of a finite union of non-degenerate critical circles: the suitable iterates of the critical circles in $\crit(\Psi)\cap\Psi^{-1}(0,\tau(B))$.
Since the Clarke action functional $\Psi$ is perfect, every critical value of $\Psi$ is equal to some spectral invariant $s_k(B)$, i.e.
\begin{align*}
\sigma(\partial B)
=
\big\{s_k(B)\ \big|\ k\geq1\big\}.
\end{align*}
For all integers $m\geq1$, we define
\begin{align*}
 k_m=k_m(B):= \min\big\{ k\geq1\ \big|\  s_k(B)=m\,\tau(B)\big\}.
\end{align*}
If $k_1\geq2$, for each $k\in\{1,...,k_1-1\}$ the critical set $\crit(\Psi)\cap\Psi^{-1}(s_k(B))$ is a single non-degenerate critical circle. Therefore, the perfectness and the Lusternik-Schinerlmann property of $\Psi$ imply that
\begin{align}
\label{e:s_k=tau_k_final}
 s_k(B)=\tau_k(B),\qquad\forall k\in\{1,...,k_1\}.
\end{align}
Moreover, the spectral characterization of Besse convex bodies (see Section~\ref{ss:Besse}) implies that $s_{k_m}(B)=s_{k_m+1}(B)<s_{k_m+2}(B)$ and, if $k_m>1$, $s_{k_m-1}(B)<s_{k_m}(B)$.
We define 
\[
K(B):=\big\{k_m(B)\ \big|\ m\geq1  \big\}.
\]

\begin{Lemma}
\label{l:s_k=tau_k}
Let $B_0$ be a $4$-dimensional Besse convex body, and $m\geq1$ an integer. For each smooth convex body $B_1$ that is sufficiently $C^2$-close to $B_0$, we have
\begin{align*}
s_{k_m(B_0)}(B_1)\leq m\,\tau_{k_1(B_0)}(B_1).
\end{align*}
\end{Lemma}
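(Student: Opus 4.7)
The plan is to apply Lemma~\ref{l:perturbation} at the two indices $k_1 := k_1(B_0)$ and $k_m := k_m(B_0)$ and then to link the results by iteration. Set $c := \tau(B_0)$, so that $s_{k_1}(B_0) = c$ and $s_{k_m}(B_0) = mc$. In dimension $2n = 4$, the spectral characterization of Besse convex bodies (Section~\ref{ss:Besse}) gives $s_{k_j}(B_0) = s_{k_j + 1}(B_0)$ for $j = 1, m$, so the hypothesis of Lemma~\ref{l:perturbation} is satisfied at both indices. I would choose compact intervals $[a_1, b_1]$ and $[a_m, b_m]$ containing $c$ and $mc$ in their interiors respectively, disjoint from the rest of $\sigma(\partial B_0)$, and small enough that $[m a_1, m b_1] \subset [a_m, b_m]$. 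For every $B_1$ sufficiently $C^2$-close to $B_0$, Lemma~\ref{l:perturbation} then yields
\begin{align*}
 s_{k_1}(B_1) = \min\bigl(\sigma(\partial B_1) \cap [a_1, b_1]\bigr),
 \qquad
 s_{k_m}(B_1) = \min\bigl(\sigma(\partial B_1) \cap [a_m, b_m]\bigr).
\end{align*}

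The central step is to show that $\tau_{k_1}(B_1) \in [a_1, b_1]$ for $B_1$ sufficiently $C^2$-close to $B_0$. The Clarke functional $\Psi_0$ associated with $B_0$ has exactly $k_1 - 1$ critical circles below level $c$, located at the spectral values $s_1(B_0), \dots, s_{k_1-1}(B_0)$, each non-degenerate by the critical-manifold property of Besse convex bodies. Invoking the finite-dimensional reduction (Section~\ref{ss:reduction}) on a domain valid up to a level strictly greater than $b_1$, as in the proof of Lemma~\ref{l:perturbation}, the implicit function theorem applied at each of these circles, together with compactness of the reduced sublevel $\{\psi_0 \leq a_1\}$, ensures that for $B_1$ close enough to $B_0$ the functional $\Psi_1$ still has exactly $k_1 - 1$ critical circles with action below $a_1$. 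Since $s_{k_1}(B_1) \in [a_1, b_1]$ is itself a critical value of $\Psi_1$ by spectrality, this provides a $k_1$-th critical circle with action $\leq b_1$, forcing $\tau_{k_1}(B_1) \in [a_1, b_1]$.

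The conclusion then follows by iterating a short Reeb orbit. The value $\tau_{k_1}(B_1)$ is the jump point of the counting function of critical circles of $\Psi_1$, so it belongs to $\sigma(\partial B_1)$: there is a closed Reeb orbit $\gamma$ of $\partial B_1$ of period $\tau_{k_1}(B_1)$. Its $m$-fold iterate $\gamma^m$ is a closed Reeb orbit of period $m\tau_{k_1}(B_1)$, hence $m\tau_{k_1}(B_1) \in \sigma(\partial B_1)$. By our choice $[m a_1, m b_1] \subset [a_m, b_m]$, this value lies in $\sigma(\partial B_1) \cap [a_m, b_m]$, and the minimum formula for $s_{k_m}(B_1)$ displayed above immediately yields $s_{k_m}(B_1) \leq m\tau_{k_1}(B_1)$. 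I expect the only delicate point to be ruling out spurious new critical circles of $\Psi_1$ below level $a_1$: this is where the Besse non-degeneracy of the subcritical orbits of $B_0$ and the $C^2$-smallness of the perturbation of $B_0$ are both essential, and where the finite-dimensional reduction of Section~\ref{ss:reduction} does the real work.
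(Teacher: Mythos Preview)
Your proof is correct and follows essentially the same route as the paper's: apply Lemma~\ref{l:perturbation} at $k_m$, use non-degeneracy of the sub-$\tau(B_0)$ critical circles to control $\tau_{k_1}(B_1)$, and then observe that $m\,\tau_{k_1}(B_1)\in\sigma(\partial B_1)$ lands in the relevant interval. The paper is marginally leaner---it applies Lemma~\ref{l:perturbation} only at $k_m$ (your application at $k_1$ is harmless but unnecessary, since you only use $s_{k_1}(B_1)\in[a_1,b_1]$, which already follows from $C^0$-continuity) and only needs the one-sided bound $m\,\tau_{k_1}(B_1)\geq a_m$---but the substance is the same.
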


\begin{proof}
Fix an integer $m\geq1$, and consider $k_1=k_1(B_0)$ and $k_m=k_m(B_0)$.  Let $[a,b]\subset(0,\infty)$ be a compact neighborhood of $s_{k_m}(B_0)$ that is small enough so that \[\sigma(\partial B_0)\cap[a,b]=\{s_{k_m}(B_0)\}.\]
Let $\epsilon>0$ be small enough so that $m(s_{k_1}(B_0)-\epsilon)\geq a$. Let $\Psi$ be the Clarke action functional associated with $B_0$. Since $\crit(\Psi)\cap\Psi^{-1}(\tau_{k}(B_0))$ is a non-degenerate critical circle for each $k\in\{1,...,k_1-1\}$, 
any smooth convex body $B_1$ that is sufficiently $C^2$-close to $B_0$ satisfies
\begin{align*}
 |\tau_k(B_1)-\tau_k(B_0)|<\epsilon,\qquad\forall k\in\{1,...,k_1\}.
\end{align*}
This, together with \eqref{e:s_k=tau_k_final}, implies $\tau_{k_1}(B_1)>s_{k_1}(B_0)-\epsilon$, and therefore \[m\,\tau_{k_1}(B_1)\geq a.\] 
By Lemma~\ref{l:perturbation}, we have 
\[s_{k_m}(B_1)=\min\big(\sigma(\partial B_1)\cap[a,b]\big).\]
Since $m\,\tau_{k_1}(B_1)\in\sigma(\partial B_1)$, we conclude $s_{k_m}(B_1)\leq m\,\tau_{k_1}(B_1)$.
\end{proof}

We can finally prove the main result of this section. For each $k\geq1$ and for each $4$-dimensional smooth convex body $B\subset\C^n$, we set
\begin{align*}
 \widehat\tau_k(B)=\frac{\tau_k(B)}{\vol(B)^{1/2}},
 \qquad
 \widehat s_k(B)=\frac{s_k(B)}{\vol(B)^{1/2}},
\end{align*}
where the volume $\vol(B)=\vol(B,\omega)$ is obtained by integrating $\omega\wedge\omega$.

\begin{Thm}\label{t:main_Clarke}
Let $B_0$ be a $4$-dimensional Besse convex body. For each integer $k\in K(B_0)$, any smooth convex body $B_1$ that is sufficiently $C^3$-close to $B_0$ satisfies $\widehat{s}_k(B_1)\leq \widehat{s}_k(B_0)$.
\end{Thm}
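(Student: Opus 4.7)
The plan is to relate the spectral ratio $\widehat{s}_k$ of a smooth convex body to the systolic ratio $\widehat{\tau}_{k_1}$ of the induced Reeb flow on its boundary, and then invoke the systolic characterization of Besse contact 3-manifolds of \cite{Abbondandolo:2022aa}. Fix $m\geq 1$ with $k=k_m(B_0)$, and write $k_1:=k_1(B_0)$. By the very definition of $k_m$ together with identity \eqref{e:s_k=tau_k_final}, one has $s_k(B_0)=m\,\tau(B_0)=m\,\tau_{k_1}(B_0)$, so that $\widehat{s}_k(B_0)=m\,\widehat{\tau}_{k_1}(B_0)$. Lemma~\ref{l:s_k=tau_k} then yields $\widehat{s}_k(B_1)\leq m\,\widehat{\tau}_{k_1}(B_1)$ whenever $B_1$ is sufficiently $C^2$-close to $B_0$, and the theorem reduces to the systolic inequality $\widehat{\tau}_{k_1}(B_1)\leq\widehat{\tau}_{k_1}(B_0)$ for $B_1$ sufficiently $C^3$-close to $B_0$.

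To put this inequality in the framework of \cite{Abbondandolo:2022aa}, I would transfer the Reeb dynamics of $\partial B_1$ onto the fixed 3-manifold $\partial B_0\cong S^3$ by radial projection. For $B_1$ sufficiently $C^0$-close to $B_0$ there is a unique smooth function $\rho:\partial B_0\to(0,\infty)$ such that $f(z):=\rho(z)\,z$ parametrizes $\partial B_1$; the map $B_1\mapsto\rho$ is continuous from the $C^3$-topology on smooth convex bodies to the $C^3$-topology on positive functions, and $\rho\equiv 1$ when $B_1=B_0$. A direct computation from the formula \eqref{e:Liouville} for $\lambda$, using that $\langle z,Jz\rangle=0$, yields
\[
f^{*}\lambda \,=\, \rho^{2}\,\alpha_0,\qquad \alpha_0:=\lambda|_{\partial B_0}.
\]
Thus $f$ is a strict contactomorphism from $(\partial B_0,\rho^{2}\alpha_0)$ onto $(\partial B_1,\lambda|_{\partial B_1})$, and in particular
\[
\widehat{\tau}_{k_1}(B_1)\,=\,\widehat{\tau}_{k_1}\bigl(\partial B_0,\,\rho^{2}\alpha_0\bigr).
\]

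At this stage the theorem follows at once from the main result of Abbondandolo, Lange, and Mazzucchelli \cite{Abbondandolo:2022aa} recalled in the introduction: since $(\partial B_0,\alpha_0)$ is Besse with $k_1(\partial B_0,\alpha_0)=k_1$, the contact form $\alpha_0$ is a local maximizer of $\alpha\mapsto\widehat{\tau}_{k_1}(\partial B_0,\alpha)$ with respect to the $C^3$-topology on contact forms. Consequently, for $B_1$ sufficiently $C^3$-close to $B_0$, the contact form $\rho^{2}\alpha_0$ is $C^3$-close enough to $\alpha_0$ that
\[
\widehat{\tau}_{k_1}(B_1)\,=\,\widehat{\tau}_{k_1}\bigl(\partial B_0,\rho^{2}\alpha_0\bigr)\,\leq\,\widehat{\tau}_{k_1}(\partial B_0,\alpha_0)\,=\,\widehat{\tau}_{k_1}(B_0),
\]
and combined with the first paragraph this delivers $\widehat{s}_k(B_1)\leq m\,\widehat{\tau}_{k_1}(B_0)=\widehat{s}_k(B_0)$. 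The only delicate point of the plan is the clean identification of the $C^3$-topology on convex bodies with the $C^3$-topology on contact forms on the fixed 3-sphere $\partial B_0$; the identity $f^{*}\lambda=\rho^{2}\alpha_0$ makes that bookkeeping transparent, after which everything else is an immediate assembly of Lemma~\ref{l:s_k=tau_k} and the result of \cite{Abbondandolo:2022aa}.
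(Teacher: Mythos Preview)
Your proof is correct and follows essentially the same route as the paper's: relate $s_{k_m}(B_0)=m\,\tau_{k_1}(B_0)$, invoke Lemma~\ref{l:s_k=tau_k} for the inequality $s_{k_m}(B_1)\le m\,\tau_{k_1}(B_1)$, and then apply the systolic result of \cite{Abbondandolo:2022aa} to bound $\widehat\tau_{k_1}(B_1)\le\widehat\tau_{k_1}(B_0)$. The only difference is cosmetic: you spell out the radial-graph identification $f^*\lambda=\rho^2\alpha_0$ to match the $C^3$-topology on bodies with the $C^3$-topology on contact forms on the fixed sphere $\partial B_0$, whereas the paper treats this translation as understood and states the conclusion of \cite{Abbondandolo:2022aa} directly for convex bodies.
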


\begin{proof}
Consider the integers $k_m=k_m(B_0)$ associated with $B_0$. The spectral invariants $s_{k_m}(B_0)$ are given by
\begin{align*}
 s_{k_m}(B_0)=m \,\tau_{k_1}(B_0).
\end{align*}
A theorem of Abbondandolo, Lange, and Mazzucchelli \cite{Abbondandolo:2022aa}, extending an earlier theorem due to Abbondandolo, Bramham, Hryniewicz, and Salomao \cite{Abbondandolo:2018aa} for the special case $k_1=1$, implies that any smooth convex body $B_1$ that is sufficiently $C^3$-close to $B_0$ satisfies
\begin{align*} 
\widehat\tau_{k_1}(B_1)\leq\widehat\tau_{k_1}(B_0).
\end{align*}
Fix an integer $m\geq1$. By Lemma~\ref{l:s_k=tau_k}, if $B_1$ is sufficiently $C^2$-close to $B_0$, we have
\begin{align*}
s_{k_m}(B_1)\leq m\,\tau_{k_1}(B_1). 
\end{align*}
Overall, we proved that $\widehat s_{k_m}(B_1)\leq \widehat s_{k_m}(B_0)$ for each smooth convex body $B_1$ that is sufficiently $C^3$-close to $B_0$.
\end{proof}

In Theorem~\ref{t:final_Clarke} at the end of Section~\ref{ss:local_max_chat_k}, we will strengthen this theorem and fully characterize the local maximizers of the spectral ratios $\widehat s_k$.

\section{The Ekeland-Hofer capacities}
\label{s:EH}

\subsection{Definition of the capacities}
\label{ss:def_EH_capacities}
We begin by briefly recalling Ekeland and Hofer's construction \cite{Ekeland:1989aa, Ekeland:1990aa} of their symplectic capacities. The functional setting  involves the Sobolev space $E:=W^{1/2,2}(S^1,\C^n)$, which is the space of all periodic curves $\gamma:S^1\to\C^n$ of the form
\begin{align*}
\gamma(t)=\sum_{k\in\Z} e^{i2\pi kt}\gamma_k, 
\end{align*}
where $\gamma_k\in\C^n$, such that 
\[ 
\|\gamma\|_{E}^2:= \sum_{k\in\Z}|k|\cdot\|\gamma_k\|^2<\infty.
\]
The circle $S^1$ acts on $E$ by translations, i.e. $t\cdot\gamma=\gamma(t+\cdot)$ for all $t\in S^1$ and $\gamma\in E$.
The Sobolev space $E$ admits an orthogonal direct sum decomposition 
\begin{align}
\label{e:E=E-_E0_E+}
E=E_-\oplus E_0\oplus E_+,
\end{align} 
where $E_\pm$ consists of the $\gamma$ such that $\gamma_k=0$ for all $\mp k\geq0$, and $E_0$ consists of the constants $\gamma\equiv \gamma_0$. We consider the quadratic form $a:E\to\R$ given by
\begin{align*}
 a(\gamma)=\pi\sum_{k\in\Z} k\,\|\gamma_k\|^2.
\end{align*}
On the dense subspace $W^{1,2}(S^1,\C^n)\subset E$, this quadratic form is related to the quadratic form $\AAA:L^2_0(S^1,\C^n)\to\R$ employed in the previous section by \[a(\gamma)=\AAA(\dot\gamma).\] 

The Hamiltonians involved in Ekeland-Hofer's setting are non-negative smooth functions of the form $H:\C^n\to[0,\infty)$ that vanish on some non-empty open subset of $\C^n$ and have the form $H(z)=r\|z\|^2$ outside a compact set, for some $r>\pi$ that is not a multiple of $\pi$; we briefly refer to such $H$ as to \emph{admissible} Hamiltonians. The associated Hamiltonian action functional $\Phi_H:E\to\R$, given by
\begin{align*}
 \Phi_H(\gamma)=a(\gamma)-\int_0^1 H(\gamma(t))\,dt,
\end{align*}
is $C^\infty$, $S^1$-invariant, satisfies the Palais-Smale condition, and its critical circles $S^1\cdot\gamma$ are precisely the circles of curves of the form $\gamma(t)=\phi_H^t(\gamma(0))$, where $\phi_H^t$ is the Hamiltonian flow of $H$. However, unlike for the Clarke action functional, the critical circles of $\Phi_H$ have infinite Morse index.

A preliminary step to construct the Ekeland-Hofer capacities consists in defining suitable spectral invariants for $\Phi_H$. Since all Morse indices are infinite, the recipe is not an ordinary min-max, but a more sophisticated linking argument. We write each element of $E$ as $\gamma=\gamma_-+\gamma_0+\gamma_+$ according to the orthogonal decomposition~\eqref{e:E=E-_E0_E+}.
Let $\DD$ be the group of those $S^1$-equivariant homeomorphisms $\Theta:E\to E$ of the form
\begin{align}
\label{e:Theta}
 \Theta(\gamma) 
 = 
 e^{\theta_-(\gamma)}\gamma_-
 +
 \gamma_0
 +
 e^{\theta_+(\gamma)}\gamma_+ + \KK(\gamma),
\end{align}
where $\theta_\pm:E\to\R$ and $\KK:E\to E$ are continuous $S^1$-invariant maps that send bounded sets to precompact sets, vanish on the sublevel set $\{a\leq0\}$, and vanish as well outside a ball of sufficiently large radius in $E$. Let $S_+$ be the unit-sphere of the Hilbert subspace $E_+\subset E$. The \emph{Ekeland-Hofer index} of an $S^1$-invariant subset $\VV\subset E$ is defined as
\begin{align*}
 \EH(\VV)=\inf_{\Theta\in\DD}\ \inf_\UU\ \FR(\UU),
\end{align*}
where the inner infimum ranges over all $S^1$-invariant open subsets $\UU\subset E$ containing $\VV\cap\Theta(S_+)$. For each integer $k\geq1$, we define the $k$-th spectral invariant associated with the Ekeland-Hofer index as
\begin{align*}
 \cEH k(H) := \inf\big\{ c\in\R\ \big|\ \EH(\{\Phi_H<c\})\geq k\big\}.
\end{align*}
These values satisfy the following properties:
\begin{itemize}
\setlength\itemsep{5pt}

\item \textbf{(Finiteness)} We have $\cEH1(H)\leq \cEH2(H)\leq...\leq \cEH{nk}(H)<\infty$ for all Hamiltonians $H\in\FF(B)$ such that $H(z)>k\pi\|z\|^2$ outside a compact set. From now on, whenever we write $\cEH k(H)$ we implicitly assume that $H$ satisfies this latter condition.

\item \textbf{(Positivity)} $\cEH1(H)>0$.

\item \textbf{(Spectrality)} Every $\cEH k(H)$ is a critical value of the Hamiltonian action functional $\Phi_H$.

\item \textbf{(Monotonicity)} If $H_1\leq H_2$ pointwise, then $\cEH k(H_1)\geq \cEH k(H_2)$.

\end{itemize}

For a bounded subset $B\subset\C^n$, we denote by $\FF(B)$ the family of admissible Hamiltonians $H:\C^n\to[0,\infty)$ vanishing on some neighborhood of the closure $\overline B$. The \emph{$k$-th Ekeland-Hofer capacity} of $B$ is defined as 
\begin{align*}
\cEH k(B) = \inf_{H\in\FF(B)} \cEH k(H).
\end{align*}
For an unbounded subset $U\subset\C^n$, the $k$-th Ekeland-Hofer capacity $\cEH k(U)$ is defined as the supremum of $\cEH k(B)$ over all bounded subsets $B\subset U$. The $\cEH k$'s satisfy the monotonicity, conformality, and normalization properties of symplectic $k$-capacities (see Section~\ref{ss:result}).

Throughout this section, in order to simplify the notation, we shall denote the Ekeland-Hofer spectral invariants and capacities $\cEH k$ simply by $c_k$.

\subsection{Capacities vs spectral invariants}

We say that a closed hypersurface $N\subset\C^n$ is of \emph{restricted contact type} when there exists a primitive $\nu$ of the standard symplectic form $\omega$ of $\C^n$ such that $\nu|_{N}$ is a contact form. Notice that the action spectrum $\sigma(N)$, which is the collection of the periods of the closed Reeb orbits of $(N,\nu)$, is independent of the specific choice of the primitive $\nu$. Ekeland and Hofer \cite[Prop.~2]{Ekeland:1990aa} established the following remarkable facts.

\begin{itemize}
\setlength\itemsep{5pt}

\item \textbf{(Hausdorff continuity)} The functions $B\mapsto c_k(B)$ are continuous over the space of connected compact domains $B\subset\C^n$ with smooth restricted contact type boundary, endowed with the topology induced by the Hausdorff distance.
 
\item \textbf{(Spectrality)} For each connected compact subset $B\subset\C^n$ with smooth restricted contact type boundary, we have $c_k(B)\in\sigma(\partial B)$.

\end{itemize}

Notice in particular that, if $B\subset\C^n$ is a smooth convex body, both the Ekeland-Hofer capacities $c_k(B)$ and the spectral invariants $s_k(B)$ (see Section~\ref{ss:Clarke_spectral_invariants}) are elements of the action spectrum $\sigma(\partial B)$. A result of Sikorav \cite[Section 6.5]{Sikorav:1990aa} shows that these invariants coincide for $k=1$, i.e.
\begin{align}
\label{e:c1_systole}
c_1(B)=s_1(B)=\min\sigma(\partial B). 
\end{align}
It is not known whether $c_k(B)=s_k(B)$ for $k\geq2$ as well. In this section, we prove that there is at least an inequality.

\begin{Prop}
\label{p:capacity<spectral}
For each smooth convex body $B\subset\C^n$, we have 
\[c_k(B)\leq s_k(B),\qquad\forall k\geq2.\]
\end{Prop}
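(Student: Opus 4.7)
The plan is to show, for every $\epsilon > 0$, that $c_k(B) \leq s_k(B) + \epsilon$, which on letting $\epsilon \downarrow 0$ yields the claim. Setting $s := s_k(B) + \epsilon$ so that $\FR(\{\Psi < s\}) \geq k$ by definition, and recalling that $c_k(B) = \inf_{H \in \FF(B)} c_k(H)$, it suffices to produce an admissible $H \in \FF(B)$ with $c_k(H) \leq s$. I would choose $H$ to vanish on a thin inner neighborhood of $B$, be strictly convex with positive-definite Hessian outside it, coincide with a large multiple of the $2$-homogeneous Hamiltonian $h$ (with $h^{-1}(1) = \partial B$) on a large compact set containing all low-action $1$-periodic orbits of $\Phi_H$, and reduce to $r \|z\|^2$ outside a still larger ball, for some non-resonant $r > k\pi$.

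The heart of the argument is Clarke duality for this convex $H$. Via the Legendre transform $H^*$, one associates to $\Phi_H$ a dual action functional on $L_0^2(S^1,\C^n)$ whose critical points correspond bijectively, with matching positive action values, to the nontrivial critical points of $\Phi_H$; this correspondence extends, as established by Ekeland and Hofer in \cite{Ekeland:1987aa}, to an $S^1$-equivariant homotopy equivalence between suitable sublevel sets of $\Phi_H|_{E_+}$ and sublevel sets of the dual action. The inequality $H \geq h$ on the relevant compact region yields $H^* \leq h^*$ there, which (after restriction to the sphere $\Lambda$ and use of $0$-homogeneity of the ratio $\HH/\AAA$) translates into an $S^1$-equivariant inclusion of $\{\Psi < s\}$ into the corresponding Clarke-side sublevel for $H$. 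Monotonicity of the Fadell-Rabinowitz index (Appendix~\ref{a:Fadell_Rabinowitz}) then gives an index $\geq k$ for that sublevel.

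It remains to lift this Clarke-side index bound to the Ekeland-Hofer index bound $\EH(\{\Phi_H < s\}) \geq k$, which by the definition of $\EH$ requires, for every $\Theta \in \DD$ of the form \eqref{e:Theta} and every $S^1$-invariant open neighborhood $\UU$ of $\{\Phi_H < s\} \cap \Theta(S_+)$, that $\FR(\UU) \geq k$. I would compose the Clarke-duality homotopy equivalence above with the natural retraction of $\Theta(S_+)$ along the deformations $e^{\theta_\pm}$ and $\KK$ (which vanish on $\{a \leq 0\}$ and outside a large ball), producing an $S^1$-equivariant map from a $\FR$-index-$\geq k$ subset of the Clarke-side sublevel into $\UU$; monotonicity of $\FR$ then concludes. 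The principal obstacle is the uniformity of this transfer in $\Theta \in \DD$: the general distortions $\theta_\pm, \KK$ may reshape the linking in ways invisible on the Clarke side, and handling them rigorously is exactly where the subtler $\FR$-index properties collected in Appendix~\ref{a:Fadell_Rabinowitz} are needed, in particular the stability of $\FR$ under $S^1$-equivariant deformations and passage to open neighborhoods. This technical uniformity is precisely what is not required in Sikorav's proof of the $k=1$ equality \eqref{e:c1_systole}, where the minimax reduces to an absolute infimum and no genuine linking intervenes.
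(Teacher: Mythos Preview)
Your proposal has two genuine gaps, and the paper takes a substantially different route.

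First, the Hamiltonian $H$ you describe cannot carry a Clarke duality in the way you need. An admissible $H\in\FF(B)$ must vanish on an open neighborhood of $\overline B$, so it is never convex, and the Legendre dual $H^*$ is ill-defined on a large region. The equivariant homotopy equivalence you cite from \cite{Ekeland:1987aa} between sublevels of $\Phi_H|_{E_+}$ and dual-action sublevels is established for strictly convex $2$-homogeneous Hamiltonians, not for admissible ones; patching this by making $H$ convex ``on the relevant compact region'' does not produce a dual functional whose sublevel sets model those of $\Phi_H$.

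Second, and more seriously, even granting some Clarke-side map with index $\geq k$, your lifting step is not an argument. To obtain $\EH(\{\Phi_H<s\})\geq k$ you must show, for every $\Theta\in\DD$, that any neighborhood $\UU$ of $\{\Phi_H<s\}\cap\Theta(S_+)$ has $\FR(\UU)\geq k$. You propose to compose a Clarke-duality map with ``the natural retraction of $\Theta(S_+)$ along the deformations $e^{\theta_\pm}$ and $\KK$'', but there is no such retraction: $\Theta$ is a homeomorphism of $E$, not a deformation of $S_+$ onto anything Clarke-related, and the compact perturbation $\KK$ can move points of $S_+$ essentially anywhere inside $\{a>0\}$. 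Nothing in your sketch forces the image of your map to lie in $\Theta(S_+)$, let alone in $\{\Phi_H<s\}\cap\Theta(S_+)$.

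The paper avoids both issues by never invoking Clarke duality for $H$. Instead, it pushes the Clarke sublevel for the $2$-homogeneous $h$ into $E$ via the integration map $i:L^2_0\to E_-\oplus E_+$, sets $\WW_c=\R_+\,i(\KK_c)$ with $\KK_c$ a compact model of $\{\widetilde\Psi\leq c\}$, and works with the \emph{product} set $E_-\oplus E_0\oplus\WW_c$. Sikorav's Fenchel-type inequality (Lemma~\ref{l:Sikorav}) yields $\sup_{E_-\oplus E_0\oplus\WW_c}\Phi_H\leq c+\epsilon$ for a suitably chosen $H\in\FF(B)$ with $H\geq ch-c-\epsilon$; no convexity of $H$ is needed here. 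The hard part is then purely topological: Lemma~\ref{l:E-_E0_X} shows $\EH(E_-\oplus E_0\oplus\WW_c)\geq\FR(\WW_c)\geq k$ by a finite-dimensional Borsuk--Ulam argument (Proposition~\ref{p:Borsuk_Ulam}), generalizing Ekeland--Hofer's computation $\EH(E_-\oplus E_0\oplus V)=\dim_\C V$ from linear subspaces $V\subset E_+$ to the $\C_*$-invariant set $\WW_c$. This is where the intersection with $\Theta(S_+)$ is actually analyzed, via projection to $F_N$ and a degree-type argument, rather than by any retraction.
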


The proof of this proposition is technical, and will take the rest of this subsection. A reader mainly interested in the application may skip it and proceed directly to the next Section~\ref{ss:local_max_chat_k}.

Let $B\subset\C^n$ be a smooth convex body, $h:\C^n\to[0,\infty)$ be the 2-homogeneous Hamiltonian such that $h^{-1}(1)=\partial B$,  $\widetilde\Psi=\HH/\AAA:\AAA^{-1}(0,\infty)\to(0,\infty)$ the associated (unrestricted) Clarke action functional, and $\Psi=\widetilde\Psi|_\Lambda$ its restriction. The following lemma is  extracted from the arguments in \cite[Sec.~6.5]{Sikorav:1990aa}.

\begin{Lemma}
\label{l:Sikorav}
For each $\gamma\in W^{1,2}(S^1,\C^n)\cap a^{-1}(0,\infty)$ and $\zeta\in E_-\oplus E_0\oplus\R\gamma$, we have
\begin{align*}
a(\zeta)\leq \widetilde\Psi(\dot\gamma)\int_{S^1} h(\zeta(t))\,dt.
\end{align*}
\end{Lemma}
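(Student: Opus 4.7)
The plan is to combine a Fenchel–Young estimate exploiting the positive $2$-homogeneity of $h$ and $h^*$ with a reverse Cauchy–Schwarz inequality coming from a signature computation on the subspace $W:=E_-\oplus E_0\oplus\R\gamma$. Polarizing $a$ and integrating by parts, the associated symmetric bilinear form reads
\[
2\,\hat a(\gamma,\zeta)=\int_{S^1}\langle -J\dot\gamma(t),\zeta(t)\rangle\,dt,
\]
an identity valid for $W^{1,2}$ curves and extending by $L^2$-density.

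For the Fenchel–Young step I would apply the inequality $\langle w,z\rangle\le h(z)+h^*(w)$ at the rescaled pair $(\lambda\zeta(t),\lambda^{-1}(-J\dot\gamma(t)))$ with $\lambda>0$, using $h(\lambda z)=\lambda^2 h(z)$ and $h^*(\lambda^{-1}w)=\lambda^{-2}h^*(w)$ to obtain the pointwise bound $\langle -J\dot\gamma,\zeta\rangle\le\lambda^2 h(\zeta)+\lambda^{-2}h^*(-J\dot\gamma)$. Integrating over $S^1$ and optimizing over $\lambda>0$ yields
\[
\int_{S^1}\langle -J\dot\gamma,\zeta\rangle\,dt\ \le\ 2\sqrt{\HH(\dot\gamma)\cdot\textstyle\int_{S^1} h(\zeta)\,dt}\,,
\]
and hence $\hat a(\gamma,\zeta)^2\le\HH(\dot\gamma)\int_{S^1} h(\zeta)\,dt$ whenever $\hat a(\gamma,\zeta)\ge 0$.

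The key reverse Cauchy–Schwarz on $W$ comes from a signature argument. Decomposing $\gamma=\gamma_-+\gamma_0+\gamma_+$, the hypothesis $a(\gamma)>0$ and $a|_{E_-\oplus E_0}\le 0$ force $\gamma_+\ne 0$; consequently the projection $\pi_+:W\to E_+$ sending $\eta_-+\eta_0+s\gamma$ to $s\gamma_+$ has one-dimensional image $\R\gamma_+$ and kernel $E_-\oplus E_0$, on which $a\le 0$. Any subspace of $W$ on which $a$ is positive-definite must therefore inject into $\R\gamma_+$ and be at most one-dimensional. Applied to $\mathrm{span}(\gamma,\zeta)\subset W$, this forces the $2\times 2$ Gram matrix of $a$ on that plane to fail to be positive-definite, and since $a(\gamma)>0$ its determinant is non-positive, yielding
\[
a(\gamma)\,a(\zeta)\ \le\ \hat a(\gamma,\zeta)^2.
\]

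To conclude, if $a(\zeta)\le 0$ the lemma is trivial since the right-hand side is non-negative; otherwise the reverse Cauchy–Schwarz forces $\hat a(\gamma,\zeta)^2>0$, and in the case $\hat a(\gamma,\zeta)\ge 0$ the two bounds chain to give $a(\gamma)\,a(\zeta)\le\HH(\dot\gamma)\int_{S^1} h(\zeta)\,dt$, whence dividing by $a(\gamma)$ produces the claim. I expect the signature computation of step three to be the main conceptual point, as it depends essentially on the specific form of $W=E_-\oplus E_0\oplus\R\gamma$ (a generic line in place of $\R\gamma$ would raise the positive index of $a|_W$ above $1$ and destroy the reverse inequality); the residual case $\hat a(\gamma,\zeta)<0$ cannot be reduced to $\hat a\ge 0$ by simply negating $\zeta$, since $h$ is only positively $2$-homogeneous, and should be handled by an auxiliary Fenchel estimate exploiting the convexity of $h$ together with $a|_{E_-\oplus E_0}\le 0$.
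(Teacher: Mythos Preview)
Your approach is essentially the paper's. The paper obtains the reverse Cauchy--Schwarz $a(\gamma)\,a(\zeta)\le \hat a(\gamma,\zeta)^2$ by a discriminant argument: writing $\zeta=\eta+s\gamma$ with $\eta\in E_-\oplus E_0$, the quadratic $c\mapsto a(\zeta+c\gamma)$ takes the non-positive value $a(\eta)$ at $c=-s$, hence has non-negative discriminant. This is exactly your signature observation that $a|_W$ has positive index at most one, rephrased. The Fenchel--Young step with the scaling parameter and the subsequent optimization are the same (the paper writes the parameter as $c>0$ rather than $\lambda$, but the resulting bound $\tfrac14\langle J\zeta,\dot\gamma\rangle^2\le \HH(\dot\gamma)\int h(\zeta)$ is identical to your $\hat a(\gamma,\zeta)^2\le \HH(\dot\gamma)\int h(\zeta)$).

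The residual case $\hat a(\gamma,\zeta)<0$ with $a(\zeta)>0$ that you flag is also not handled in the paper's proof: there the optimal $c=\tfrac12\langle J\zeta,\dot\gamma\rangle\,\HH(\dot\gamma)^{-1}$ is simply plugged in without checking its sign. So this is a shared minor lacuna rather than a defect of your proposal relative to the paper.
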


\begin{proof}
Since $a(\gamma)>0$ and $a|_{E_-\oplus E_0}\leq0$, the polynomial 
\[
c\mapsto a(\zeta+c \gamma) = a(\zeta) +  c\, \langle J\zeta,\dot \gamma\rangle_{L^2} + c^2 a(\gamma)
\] 
has at least one root. Therefore its discriminant is non-negative, i.e.
\begin{align}
\label{e:Sikorav}
a(\gamma)a(\zeta)
\leq
\frac{\langle J\zeta,\dot \gamma\rangle_{L^2}^2}{4}
\end{align}
The dual Hamiltonian $h^*$, defined by~\eqref{e:dual_Hamiltonian}, satisfies the Fenchel inequality 
\[h(z)+h^*(w)\geq \langle w,z\rangle.\]
Therefore, for each $c>0$, we have
\begin{align*}
c^2\HH(\dot\gamma) = 
\HH(c\dot\gamma) = 
\int_{S^1} h^*(-J c\dot \gamma)\,dt \geq 
c\,\langle J\zeta,\dot\gamma\rangle_{L^2} - \int_{S^1} h(\zeta)\,dt,
\end{align*}
that is,
\begin{align*}
c\,\langle J\zeta,\dot\gamma\rangle_{L^2} - c^2\HH(\dot\gamma)
\leq
\int_{S^1} h(\zeta)\,dt.
\end{align*}
The left-hand side is maximal for $c=\tfrac12\langle J\zeta,\dot\gamma\rangle_{L^2}\HH(\dot\gamma)^{-1}$, and we obtain the estimate
\begin{align*}
 \frac{\langle J\zeta,\dot\gamma\rangle_{L^2}^2}{4}
\leq
\HH(\dot\gamma)
\int_{S^1} h(\zeta)\,dt.
\end{align*}
This, together with \eqref{e:Sikorav}, implies the lemma.
\end{proof}

The Hilbert spaces $L^2_0(S^1,\C)$ and $E$ admit the $S^1$ action by time-translations, and the $\R_+=(0,\infty)$ action by scalar multiplication. Overall, they thus admit an action of $\C_*\equiv\R_+\times S^1$. We introduce the $\C_*$-equivariant injective continuous map
\begin{align*}
 i:L^2_0(S^1,\C^n)\to  E_-\oplus E_+,
 \qquad
 i(\dot\gamma)=\gamma-\int_{S^1}\gamma(t)\,dt.
\end{align*}
For each $c>\min\sigma(\partial B)$ we consider the $c$-sublevel sets of the unrestricted and restricted Clarke action functionals, which are related by
\begin{align*}
 \{\widetilde\Psi\leq c\}=\R_+\{\Psi\leq c\}.
\end{align*}
The inclusion $\{\Psi\leq c\}\hookrightarrow\{\widetilde\Psi\leq c\}$ is an $S^1$-equivariant homotopy equivalence, and therefore
\[\FR(\{\widetilde\Psi\leq c\})=\FR(\{\Psi\leq c\}).\]
We set
\[
\UU_c:=i\big(\{\widetilde\Psi\leq c\}\big)
\subset
(E_-\oplus E_+)\setminus E_-.
\]

\begin{Lemma}\label{l:Sikorav2}
For all $\epsilon>0$ there exists an admissible Hamiltonian $H\in\FF(B)$ such that
\begin{align*}
 \sup_{E_-\oplus E_0\oplus \UU_c} \Phi_H\leq c+\epsilon.
\end{align*}
\end{Lemma}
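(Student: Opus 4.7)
The plan is to combine the algebraic inequality of Lemma~\ref{l:Sikorav} with an explicit construction of an admissible Hamiltonian that dominates $c\,h$ up to the additive error $c+\epsilon$.

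First, I observe that every $\zeta\in E_-\oplus E_0\oplus\UU_c$ can be decomposed as $\zeta=\zeta_-+\zeta_0+i(\dot\gamma)$ with $\zeta_-\in E_-$, $\zeta_0\in E_0$, and $\gamma\in W^{1,2}(S^1,\C^n)\cap a^{-1}(0,\infty)$ satisfying $\widetilde\Psi(\dot\gamma)\leq c$. Since $i(\dot\gamma)=\gamma-\gamma_0$, I may rewrite $\zeta=\zeta_-+(\zeta_0-\gamma_0)+\gamma$, which lies in $E_-\oplus E_0\oplus\R\gamma$ (with coefficient one on $\gamma$). Lemma~\ref{l:Sikorav} then yields
\[
a(\zeta)\leq\widetilde\Psi(\dot\gamma)\int_{S^1}h(\zeta(t))\,dt\leq c\int_{S^1}h(\zeta(t))\,dt,
\]
so that
\[
\Phi_H(\zeta)\leq\int_{S^1}\bigl(c\,h(\zeta(t))-H(\zeta(t))\bigr)\,dt.
\]
Hence it suffices to construct some $H\in\FF(B)$ with the pointwise bound $c\,h(z)-H(z)\leq c+\epsilon$ for every $z\in\C^n$.

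Next, I build such an $H$ by a cutoff procedure. Fix $\delta\in(0,\epsilon/c)$ and let $\phi:[0,\infty)\to[0,\infty)$ be a smooth nondecreasing function such that $\phi\equiv 0$ on $[0,1+\delta]$ and $\phi(s)\geq cs-c-\epsilon$ for every $s\geq 0$; this is possible since $cs-c-\epsilon$ is nonpositive on $[0,1+\epsilon/c]$, and a suitable smooth interpolation from value $0$ with slope $0$ at $s=1+\delta$ to value $0$ with slope $c$ at $s=1+\epsilon/c$ (followed by $\phi(s)=cs-c-\epsilon$) realizes the remaining conditions. Set $H_0(z):=\phi(h(z))$, which is smooth because $\phi$ vanishes in a neighborhood of $0$. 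Next, choose $r>\pi$ with $r>c\sup_{\|z\|=1}h(z)$ and $r$ not a multiple of $\pi$, so that $r\|z\|^2\geq c\,h(z)\geq c\,h(z)-(c+\epsilon)$ for every $z$. Finally, let $\chi:\C^n\to[0,1]$ be a smooth cutoff vanishing on a large ball and equal to $1$ outside a larger ball, and define
\[
H:=(1-\chi)H_0+\chi\cdot r\|\cdot\|^2.
\]
Then $H$ is smooth and nonnegative, vanishes on the neighborhood $\{h<1+\delta\}$ of $\overline B$, and equals $r\|z\|^2$ outside a compact set, so $H\in\FF(B)$. The pointwise bound $c\,h(z)-H(z)\leq c+\epsilon$ is preserved under the convex combination because it holds separately for $H_0$ and for $r\|\cdot\|^2$.

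The main obstacle is reconciling two competing requirements on $H$: the inequality $c\,h-H\leq c+\epsilon$ calls for $H$ to grow like $c\,h$, while Ekeland-Hofer admissibility demands $H(z)=r\|z\|^2$ outside a compact set, and $h$ is in general only equivalent to, not proportional to, $\|z\|^2$. The cutoff $\chi$ bridges this gap; the quantitative choice $r>c\sup_{\|z\|=1}h(z)$ is precisely what guarantees that the outer quadratic piece continues to dominate $c\,h-c-\epsilon$ throughout the transition annulus, so that no pointwise bound is lost when switching regimes.
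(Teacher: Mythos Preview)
Your proof is correct and follows essentially the same approach as the paper's: both reduce the claim to the pointwise inequality $H\geq c\,h-c-\epsilon$ via Lemma~\ref{l:Sikorav}, and then invoke an admissible Hamiltonian satisfying it. The paper's proof is terser, simply asserting that such an $H\in\FF(B)$ exists and applying Lemma~\ref{l:Sikorav} in one line, whereas you spell out both the decomposition $\zeta=\zeta_-+(\zeta_0-\gamma_0)+\gamma\in E_-\oplus E_0\oplus\R\gamma$ needed to invoke that lemma and an explicit cutoff construction of $H$.
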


\begin{proof}
Since $B=h^{-1}[0,1]$, we can choose an admissible Hamiltonian $H\in\FF(B)$ such that $H\geq c\,h-c-\epsilon$. For each $\zeta\in E_-\oplus E_0\oplus \UU_c$, Lemma~\ref{l:Sikorav} implies
\[
\Phi_H(\zeta) 
=
a(\zeta) - \int_{S^1} H(\zeta)\,dt
<
c \int_{S^1} h(\zeta)\,dt - \int_{S^1} H(\zeta)\,dt
\leq
c+\epsilon.
\qedhere
\]
\end{proof}

For any integer $N\geq0$, we introduce the finite dimensional Hilbert subspace $F_N\subset E$ of all periodic curves $\gamma\in E$ of the form
\begin{align}
\label{e:Fourier}
\gamma(t)=\sum_{|k|\leq N} e^{i2\pi kt}\gamma_k.
\end{align}
We denote by $\Pi_N:E\to F_N$ the orthogonal projection.

\begin{Lemma}
\label{l:projection_Pi_N}
For each $c>\min\sigma(\partial B)$ there exists $N_c\geq1$ such that
\begin{align*}
\Pi_N(\UU_c)\cap E_-=\varnothing,
\qquad
\forall N\geq N_c.
\end{align*}
\end{Lemma}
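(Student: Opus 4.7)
The plan is to combine a coercivity estimate on $h^{*}$ with a Fourier computation. Since $h$ is $2$-homogeneous with positive-definite Hessian outside the origin, its Legendre dual $h^{*}$ is itself $2$-homogeneous and strictly positive on $\C^{n}\setminus\{0\}$, and a compactness argument on the unit sphere of $\C^{n}$ produces a constant $\alpha=\alpha(B)>0$ with $h^{*}(w)\geq\alpha\|w\|^{2}$ for every $w\in\C^{n}$. Integrating this pointwise bound over $S^{1}$ yields $\HH(\dot\gamma)\geq\alpha\,\|\dot\gamma\|_{L^{2}}^{2}$ for every $\dot\gamma\in L^{2}_{0}(S^{1},\C^{n})$.

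Next I would expand any $\gamma\in W^{1,2}(S^{1},\C^{n})$ as $\gamma(t)=\sum_{k\in\Z} e^{i2\pi kt}\gamma_{k}$ and use Parseval, together with the identity $\AAA(\dot\gamma)=a(\gamma)$, to read
\[
\|\dot\gamma\|_{L^{2}}^{2}=4\pi^{2}\sum_{k\in\Z} k^{2}\|\gamma_{k}\|^{2},
\qquad
a(\gamma)=\pi\sum_{k\in\Z} k\,\|\gamma_{k}\|^{2}.
\]
Combining the coercivity estimate with the assumption $\widetilde\Psi(\dot\gamma)\leq c$, i.e.\ $\HH(\dot\gamma)\leq c\,a(\gamma)$, produces
\[
\sum_{k\in\Z} k\,(4\pi\alpha k-c)\,\|\gamma_{k}\|^{2}\leq 0.
\]
The coefficient $k(4\pi\alpha k-c)$ is strictly positive for every integer $k<0$ and for every integer $k>c/(4\pi\alpha)$, and is nonpositive only when $0\leq k\leq c/(4\pi\alpha)$.

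I would then choose $N_{c}$ to be any integer strictly larger than $c/(4\pi\alpha)$ and argue by contradiction. Suppose some $\zeta=i(\dot\gamma)\in\UU_{c}$ and some $N\geq N_{c}$ satisfy $\Pi_{N}(\zeta)\in E_{-}$. Since $\zeta_{0}=0$ by definition of $i$ and $\Pi_{N}(\zeta)\in E_{-}$ forces $\zeta_{k}=0$ for $0<k\leq N$, the Fourier coefficients $\gamma_{k}=\zeta_{k}$ vanish on exactly the range where $k(4\pi\alpha k-c)$ could be nonpositive. What remains in the displayed inequality is therefore a sum of nonnegative terms bounded above by $0$, forcing $\gamma_{k}=0$ for every $k\neq 0$. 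Hence $\dot\gamma\equiv 0$, contradicting $\dot\gamma\in\AAA^{-1}(0,\infty)$.

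The only genuinely nontrivial ingredient is the coercivity bound $h^{*}(w)\geq\alpha\|w\|^{2}$: this is the step where the convexity of $B$ enters, and it pins down the effective threshold $N_{c}\sim c/\alpha$ in terms of the geometry of $B$. Once this bound is in place, the rest is a direct Fourier computation, and I do not anticipate any further obstacles.
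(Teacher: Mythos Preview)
Your argument is correct and essentially the same as the paper's. Both proofs rest on the coercivity bound $h^{*}(w)\geq \alpha\|w\|^{2}$ (the paper writes $r$ for your $\alpha$) together with the Fourier identities for $\HH$ and $a$, and both arrive at the same threshold $N_{c}=c/(4\pi\alpha)$; the only cosmetic difference is that the paper bounds the ratio $\widetilde\Psi(\dot\gamma)\geq 4\pi\alpha N$ directly from $a(\gamma)\leq\pi\sum_{k>N}k\|\gamma_{k}\|^{2}$ and $\HH(\dot\gamma)\geq 4\pi^{2}\alpha N\sum_{k>N}k\|\gamma_{k}\|^{2}$, whereas you package the same information into the single inequality $\sum_{k}k(4\pi\alpha k-c)\|\gamma_{k}\|^{2}\leq 0$ and analyse the sign of the coefficients.
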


\begin{proof}
Consider an arbitrary $\gamma\in W^{1,2}(S^1,\C^n)$ such that $\gamma_0=0$, $a(\gamma)>0$ and $\Pi_N\gamma\in E_-$. This latter condition implies
\begin{align}
\label{e:a_upper}
 a(\gamma) 
 = 
 \underbrace{a(\Pi_N\gamma)}_{\leq0} + a((\id-\Pi_N)\gamma)
 \leq
 a((\id-\Pi_N)\gamma)
 \leq
 \pi \sum_{k>N} k\|\gamma_k\|^2.
\end{align}
Consider the constant
\begin{align*}
 r:= \inf_{w\in\C^n\setminus\{0\}} \frac{h^*(w)}{\|w\|^2}>0,
\end{align*}
which we employ to bound
\begin{align}
\label{e:H_lower}
 \HH(\dot\gamma) 
 \geq 
 r \|\dot\gamma\|_{L^2}^2
 \geq
 r
 \sum_{k>N}
 4\pi^2 k^2 \|\gamma_k\|^2
 \geq
 4\pi^2 r N
 \sum_{k>N}
 k \|\gamma_k\|^2.
\end{align}
The estimates \eqref{e:a_upper} and \eqref{e:H_lower} imply
\begin{align*}
\widetilde\Psi(\dot\gamma)
=
\frac{\HH(\dot\gamma)}{a(\gamma)}
\geq
4\pi r N,
\end{align*}
and the desired quantity of the lemma can be set to $N_c:=c/(4\pi r)$.
\end{proof}

For technical reasons, it will be useful to consider a compact $S^1$-invariant subset of $\UU_c$. 

\begin{Lemma}
\label{l:compactness_EH}
There exists an $S^1$-invariant compact subset $\VV_c\subset\UU_c$ such that
\begin{align*}
 \FR(\R_+\VV_c) \geq \FR(\VV_c) \geq \FR(\{\Psi\leq c\}).
\end{align*}
\end{Lemma}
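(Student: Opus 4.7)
My plan is to transport a compact sublevel set of the finite-dimensional reduction into $\UU_c$ via the composition $i\circ\iota$, and to show that the result captures the full $S^1$-equivariant Fadell--Rabinowitz index of $\{\Psi\leq c\}$. First, I would choose $c'>c$ with $(c,c']\cap\sigma(\partial B)=\varnothing$, and set up a finite-dimensional reduction $\psi:U\to(0,b)$ with $b>c'$ and associated $S^1$-equivariant embedding $\iota:U\hookrightarrow\Lambda$ satisfying $\psi=\Psi\circ\iota$. By the one-to-one correspondence of critical points from Section~\ref{ss:reduction}, both $c$ and $c'$ are regular values of $\psi$. Anti-gradient deformation arguments---applied to the rescaled gradient $V$ for $\widetilde\Psi$ on $\Lambda$ (using the Palais--Smale property on $L^2$-spheres) and directly to $\psi$ on $U$---would then produce $S^1$-equivariant deformation retractions $\{\Psi<c'\}\simeq\{\Psi\leq c\}$ and $\{\psi<c'\}\simeq\{\psi\leq c\}$. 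Combined with the approximation property $\iota:\{\psi<c'\}\xrightarrow{\simeq}\{\Psi<c'\}$, these yield
\[
\FR(\{\Psi\leq c\})=\FR(\{\Psi<c'\})=\FR(\{\psi<c'\})=\FR(\{\psi\leq c\}).
\]

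Next, I would define $\VV_c:=(i\circ\iota)(\{\psi\leq c\})$. The sublevel set $\{\psi\leq c\}$ is compact by the compactness property of the reduction, and $i\circ\iota$ is continuous and $S^1$-equivariant, so $\VV_c$ is a compact $S^1$-invariant subset of $E$. The inclusions $\iota(\{\psi\leq c\})\subset\{\Psi\leq c\}\subset\{\widetilde\Psi\leq c\}$ give $\VV_c\subset i(\{\widetilde\Psi\leq c\})=\UU_c$ as required.

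The key step is then to verify that $i\circ\iota$ is injective. The map $\iota$ is injective because $\iota(u_1)=\iota(u_2)$ forces $u_1+\nu(u_1)$ and $u_2+\nu(u_2)$ to be positive multiples of one another; projecting to $F$ (where $\nu(u_j)\in F^\perp$) and using $\|u_1\|_{L^2}=\|u_2\|_{L^2}=1$ forces $u_1=u_2$. The map $i$ is injective since it recovers any $\dot\gamma$ as its unique mean-zero primitive. Therefore $i\circ\iota$ is a continuous $S^1$-equivariant bijection from the compact space $\{\psi\leq c\}$ onto $\VV_c\subset E$, hence an $S^1$-equivariant homeomorphism, which gives $\FR(\VV_c)=\FR(\{\psi\leq c\})=\FR(\{\Psi\leq c\})$. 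The remaining inequality $\FR(\R_+\VV_c)\geq\FR(\VV_c)$ is immediate from monotonicity of the Fadell--Rabinowitz index under the $S^1$-equivariant inclusion $\VV_c\hookrightarrow\R_+\VV_c$.

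The hard part will be carrying out the deformation retraction $\{\Psi<c'\}\simeq\{\Psi\leq c\}$ given only the $C^{1,1}$-regularity of $\Psi$: one cannot use a standard smooth negative gradient flow, and must instead work with the rescaled field $V$ on $\Lambda$, invoking the Palais--Smale property recalled in Section~\ref{ss:convex_contact_spheres}. The analogous finite-dimensional retraction for $\psi$ is routine, and the rest of the argument is essentially structural bookkeeping built on the finite-dimensional reduction.
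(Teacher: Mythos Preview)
Your proposal is correct and follows essentially the same route as the paper: push the compact finite-dimensional sublevel set $\{\psi\leq c\}$ into $E$ via the reduction embedding and $i$, and compare indices through the approximation property. The paper works with the unnormalized set $\KK_c=\{u+\nu(u)\mid\psi(u)\leq c\}\subset\{\widetilde\Psi\leq c\}$ rather than your normalized $\iota(\{\psi\leq c\})\subset\{\Psi\leq c\}$, and simply asserts that $\KK_c\hookrightarrow\{\widetilde\Psi\leq c\}$ is an $S^1$-equivariant homotopy equivalence, but the substance is the same.

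One remark: the step you flag as ``hard'' --- the deformation retraction $\{\Psi<c'\}\simeq\{\Psi\leq c\}$ on the infinite-dimensional side --- is unnecessary. The lemma only asks for the inequality $\FR(\VV_c)\geq\FR(\{\Psi\leq c\})$, and monotonicity under the inclusion $\{\Psi\leq c\}\hookrightarrow\{\Psi<c'\}$ already gives $\FR(\{\Psi\leq c\})\leq\FR(\{\Psi<c'\})$. Chaining this with the approximation property and the (easy, finite-dimensional) retraction $\{\psi<c'\}\simeq\{\psi\leq c\}$ yields
\[
\FR(\{\Psi\leq c\})\leq\FR(\{\Psi<c'\})=\FR(\{\psi<c'\})=\FR(\{\psi\leq c\})=\FR(\VV_c),
\]
so you never need to run a gradient-type flow for the $C^{1,1}$ functional $\Psi$.
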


\begin{proof}
Consider the reduced Clarke action functional $\psi:U\to(0,c+1)$ introduced in Section~\ref{ss:reduction}. We recall that $U$ is an open subset of the unit-sphere $S\subset F$, where $F=F_N$ is the finite dimensional Hilbert space introduced above, and $N$ is sufficiently large. The function $\psi$ has the form 
\[\psi(u)=\widetilde\Psi(u+\nu(u)),\] 
where $\nu:U\to L^2_0(S^1,\C^n)$ is an $S^1$-equivariant $C^{1,1}$ map. The sublevel set
\begin{align*}
 \KK_c:= \big\{ u+\nu(u)\ \big|\ \psi(u)\leq c \big\}
\end{align*}
is compact in $L^2_0(S^1,\C^n)$, and the inclusion $\KK_c\hookrightarrow\{\widetilde\Psi\leq c\}$ is an $S^1$-equivariant homotopy equivalence. In particular
\begin{align*}
 \FR(\KK_c)=\FR(\{\widetilde\Psi\leq c\})=\FR(\{\Psi\leq c\}).
\end{align*}
The image $\VV_c=i(\KK_c)$ is an $S^1$-invariant compact subset of $\UU_c$, and by the monotonicity property of the Fadell-Rabinowitz index (see Appendix~\ref{a:Fadell_Rabinowitz}) we conclude
\[
 \FR(\R_+\VV_c)\geq \FR(\VV_c)\geq \FR(\KK_c).
\qedhere
\]
\end{proof}

In \cite[Prop.~1]{Ekeland:1990aa}, Ekeland and Hofer proved that 
\[\EH(E_-\oplus E_0\oplus V)=\dim_\C(V)=\FR(V\setminus\{0\})\] for every finite dimensional $S^1$-invariant vector subspace $V\subset E_+$. Building on Ekeland and Hofer arguments, we need to prove the following analogous statement for the $\C_*$-invariant set $\WW_c:=\R_+\VV_c$. We shall employ the same notation as in Appendix~\ref{a:Fadell_Rabinowitz}: if $W$ is a subset of a vector space, we set $W_*:=W\setminus\{0\}$.

\begin{Lemma}
\label{l:E-_E0_X}
For each $c>\min\sigma(\partial B)$, we have
$\EH(E_-\oplus E_0\oplus \WW_c)\geq\FR(\WW_c)$.
\end{Lemma}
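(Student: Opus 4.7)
The plan is to adapt Ekeland-Hofer's computation $\EH(E_- \oplus E_0 \oplus V) = \dim_\C V$ for finite-dimensional $S^1$-invariant subspaces $V \subset E_+$ (see \cite[Prop.~1]{Ekeland:1990aa}) to the present setting of the compact $\C_*$-cone $\WW_c = \R_+ \VV_c$, with the compactness of $\VV_c$ (granted by Lemma~\ref{l:compactness_EH}) playing the role that finite-dimensionality of $V$ plays in the original argument.

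First, I would fix an arbitrary $\Theta \in \DD$ of the form~\eqref{e:Theta} and an $S^1$-invariant open neighborhood $\UU$ of $(E_- \oplus E_0 \oplus \WW_c) \cap \Theta(S_+)$; by monotonicity of $\FR$, it suffices to show
\[
\FR\big((E_- \oplus E_0 \oplus \WW_c) \cap \Theta(S_+)\big) \geq \FR(\WW_c).
\]
For $\gamma \in S_+$, expanding $\Theta(\gamma)$ via~\eqref{e:Theta} (with $\gamma_- = \gamma_0 = 0$) yields $\Theta(\gamma) = e^{\theta_+(\gamma)}\gamma + \KK(\gamma)$. Since $\WW_c \subset E_- \oplus E_+$, the condition $\Theta(\gamma) \in E_- + E_0 + \WW_c$ is equivalent to $\psi(\gamma) \in \WW_c^+ := P_+(\WW_c)$, where $\psi(\gamma) := e^{\theta_+(\gamma)}\gamma + \KK_+(\gamma)$ and $P_+: E \to E_+$ is the orthogonal projection. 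The homeomorphism $\Theta$ thus identifies $(E_- \oplus E_0 \oplus \WW_c) \cap \Theta(S_+)$ with $\Sigma := \psi^{-1}(\WW_c^+) \subset S_+$, and the goal becomes $\FR(\Sigma) \geq \FR(\WW_c)$.

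Next, I would exploit the path-connectedness of $\DD$ via the isotopy $\Theta_s(\gamma) := e^{s\theta_-(\gamma)}\gamma_- + \gamma_0 + e^{s\theta_+(\gamma)}\gamma_+ + s\KK(\gamma)$, $s \in [0,1]$, which stays in $\DD$ and connects $\id$ to $\Theta$. The induced family $\psi_s(\gamma) := e^{s\theta_+(\gamma)}\gamma + s\KK_+(\gamma)$ is a continuous family of compact $S^1$-equivariant perturbations of the inclusion $S_+ \hookrightarrow E_+$, none of which vanishes on $S_+$ (by the structural properties of $\DD$). An $S^1$-equivariant Leray-Schauder-type argument, modelled on the degree-theoretic proof of \cite[Prop.~1]{Ekeland:1990aa} and using the compactness of $\VV_c$ to approximate $\WW_c^+$ by finite-dimensional slices, shows that the equivariant Euler class of $\psi_s^{-1}(\WW_c^+)$ is preserved along the homotopy, yielding
\[
\FR(\Sigma) = \FR(\psi_1^{-1}(\WW_c^+)) \geq \FR(\psi_0^{-1}(\WW_c^+)) = \FR(S_+ \cap \WW_c^+).
\]

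To conclude, since $\WW_c \subset (E_- \oplus E_+) \setminus E_-$, one has $0 \notin \WW_c^+$, and radial projection is an $S^1$-equivariant deformation retract of $\WW_c^+$ onto $S_+ \cap \WW_c^+$, so $\FR(S_+ \cap \WW_c^+) = \FR(\WW_c^+)$; the continuous $S^1$-equivariant map $P_+: \WW_c \to \WW_c^+$ then yields $\FR(\WW_c) \leq \FR(\WW_c^+)$ by monotonicity of $\FR$, and we obtain
\[
\FR(\UU) \geq \FR(\Sigma) \geq \FR(S_+ \cap \WW_c^+) = \FR(\WW_c^+) \geq \FR(\WW_c).
\]
The step I expect to be the hardest is the homotopy invariance asserted in the third paragraph: unlike in \cite[Prop.~1]{Ekeland:1990aa}, the target $\WW_c^+$ is not finite-dimensional, so one must invoke equivariant Leray-Schauder degree for compact perturbations of the identity (or run an explicit Galerkin approximation), carefully using the compactness of $\VV_c$ to ensure that the equivariant degree is well-defined and invariant under the homotopy.
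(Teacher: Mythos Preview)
Your setup in the first two paragraphs is correct: reducing to $\Sigma=\psi^{-1}(\WW_c^+)\subset S_+$ with $\psi=P_+\circ\Theta|_{S_+}$ is a valid reformulation, and the final chain of inequalities (radial retraction of $\WW_c^+$ onto $S_+\cap\WW_c^+$, monotonicity under $P_+$) is fine.

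The genuine gap is exactly where you locate it. The Fadell--Rabinowitz index of a preimage is not a degree and has no homotopy-invariance property of the kind you assert; invoking ``equivariant Leray--Schauder degree'' does not by itself produce a statement about $\FR(\psi_s^{-1}(\WW_c^+))$, because translating degree information into cohomological-index information \emph{is} the hard content here. Your fallback suggestion (``or run an explicit Galerkin approximation'') is in fact what the paper does, and it requires machinery your sketch does not supply.

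The paper's argument is structurally different from yours. Rather than deforming $\Theta$ to the identity on $S_+$, it passes to the finite-dimensional subspace $F_N$ for $N$ large, sets $W_*=\Pi_{N,+}(\WW_c)$ (which avoids the origin by Lemma~\ref{l:projection_Pi_N}) and $Y=F_{N,-}\oplus W$, and uses the stabilization Lemma~\ref{l:stabilization} to obtain $\FR(Y_*)\geq\FR(\WW_c)+nN$. A compactness argument, exploiting that $\VV_c$ is compact and that $\theta_\pm,\KK$ are compact and vanish outside a bounded set, shows $\big(\Pi_N\circ\Theta(Y\oplus F_0)\big)\cap S_+\subset\UU$ for $N$ large. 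The decisive input is then the Borsuk--Ulam-type Proposition~\ref{p:Borsuk_Ulam} from the appendix, applied to the finite-dimensional equivariant map $(\id-\Pi_+)\circ\Pi_N\circ\Theta|_{Y\oplus F_0}:Y\oplus F_0\to F_{N,-}\oplus F_0$: it gives $\FR(U)\geq\FR(Y_*)-nN\geq\FR(\WW_c)$ for a small open $U$ around $\psi^{-1}(0)\cap\partial B$, which maps into $\UU$. That proposition is what encodes, in index-theoretic terms, the ``linking'' your degree heuristic points toward; without it (or an equivalent), the middle step of your proposal does not go through.
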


\begin{proof}
Fix an arbitrary $\Theta\in\DD$, and an arbitrary $S^1$-equivariant open neighborhood $\UU\subset E$ of $\Theta(E_-\oplus E_0\oplus\WW_c) \cap S_+$. We need to prove 
\begin{align*}
 \FR(\UU)\geq\FR(\WW_c).
\end{align*}
We consider the finite dimensional Hilbert subspace $F=F_N\subset E$ introduced above, for a sufficiently large $N$ that we will fix later. This Hilbert subspace splits as a direct sum $F = F_- \oplus F_0\oplus F_+$, where $F_0=E_0$ and
$F_\pm =F_{N,\pm} :=E_{\pm}\cap F$.
We denote by $\Pi_\pm=\Pi_{N,\pm}:E\to F_\pm$ and $\Pi=\Pi_N:E\to F$ the orthogonal projections.  
We require $N\geq N_c$, so that Lemma~\ref{l:projection_Pi_N} implies that $\Pi_+(\WW_c)$ does not contain the origin. We set
\[W:=\Pi_+(\WW_c)\cup\{0\},\qquad W_*:=\Pi_+(\WW_c)=W\setminus\{0\},\] so that
\begin{align*}
 \Pi(E_-\oplus E_0\oplus\WW_c)=F_-\oplus F_0\oplus W_*.
\end{align*}
The monotonicity property of the Fadell-Rabinowitz index (see Appendix~\ref{a:Fadell_Rabinowitz}) implies that 
\begin{align*}
 \FR(W_*)\geq\FR(\WW_c).
\end{align*}
The vector space $F_-$ has complex dimension $nN$, and the circle $S^1$ acts on it by rotations without fixed points. 
We set $Y=Y_N:=F_-\oplus W$.
Lemma~\ref{l:stabilization} and $\WW_c \neq \varnothing$ implies 
\begin{align*}
 \FR(Y_*)=\FR(W_*)+nN\geq\FR(\WW_c)+nN > nN.
\end{align*}

We claim that, if we fix $N\geq N_c$ large enough, we have
\begin{align}
\label{e:EH_hard_inclusion}
\big(\Pi\circ\Theta(Y\oplus F_0)\big) \cap S_+ \subset \UU.
\end{align}
Indeed, assume by contradiction that there exists a sequence $\gamma_N\in Y_N\oplus F_0$ such that $\Pi_N\circ\Theta(\gamma_N)\in S_+\setminus\UU$ for arbitrarily large $N\geq N_c$. We write \[\gamma_N=\gamma_{N,-}+\gamma_{N,0}+\gamma_{N,+},\] where $\gamma_{N,\pm}=\Pi_{N,\pm}(\gamma_N)$.
Since $(\Pi_{N,-}+\Pi_{N,0})\circ\Theta(\gamma_N)=0$ and $\Theta$ is of the form \eqref{e:Theta}, we have
\begin{align}\label{e:bounded_sequence_EH}
 \gamma_{N,-}+\gamma_{N,0}+ \big(e^{-\theta_-(\gamma_N)}\Pi_{N,-} + \Pi_0 \big)\KK(\gamma_N)=0.
\end{align}
Since $\theta_-$ and $\KK$ maps bounded sets to precompact sets and vanish outside a bounded subset of $E$, Equation~\eqref{e:bounded_sequence_EH} implies that the sequence $\gamma_{N,-}+\gamma_{N,0}$ is precompact. Since $\Theta$ is the identity outside a bounded set of $E$, the sequence $\gamma_{N,+}$ is also bounded; otherwise, after extracting a subsequence we would have $\|\gamma_{N,+}\|_E\to\infty$ as $N\to\infty$, and for $N$ large enough we would have $\gamma_{N}=\Theta(\gamma_N)$ and obtain the contradiction
\begin{align*}
 1
 =
 \|\Pi_N\circ\Theta(\gamma_N)\|_E
 =
 \|\gamma_N\|_E
 \geq
 \|\gamma_{N,+}\|_E\toup_{N\to\infty} \infty.
\end{align*}
Notice that $\gamma_{N,+}$ is contained in $\Pi_{N,+}(Y_N)=\Pi_{N,+}(\WW_c)=\R_+\Pi_{N,+}(\VV_c)$, and we recall that $\Pi_{N,+}(\VV_c)$ is compact (Lemma~\ref{l:compactness_EH}). Since the sequence $\gamma_{N,+}$ is uniformly bounded, it is precompact in $\Pi_{N,+}(\WW_c)\cup\{0\}$. Overall, we showed that the sequence $\gamma_N$ is precompact, and after extracting a subsequence we have a convergence
\[\lim_{N\to\infty}\gamma_N=\gamma\in \big(E_-\oplus E_0\oplus\WW_c\cup\{0\}\big)\setminus\UU.\] 
But $\Theta(\gamma)\in S_+$, and since $\Theta(0)=0$ we must have $\gamma\neq0$, and therefore $\gamma\in E_-\oplus E_0\oplus\WW_c$. This gives a contradiction, since $\UU$ is an open neighborhood of $\Theta(E_-\oplus E_0\oplus\WW_c)\cap S_+$, and completes the proof of \eqref{e:EH_hard_inclusion}.

We introduce the $S^1$-equivariant continuous map 
$\phi:=\Pi\circ\Theta|_{F} : F\to F$, which satisfies 
\begin{align*}
\phi(Y\oplus F_0) \cap S_+ \subset \UU
\end{align*}
according to \eqref{e:EH_hard_inclusion}. 
We denote by $B'\subset F$ the closed unit ball, which is $S^1$-invariant. Since the homeomorphism $\Theta$ is the identity on $E_-\oplus E_0$ and outside a ball of $E$ of sufficiently large radius, the map $\phi$ is the identity on $F_-\oplus F_0$ and outside a compact set. Therefore $B:=\phi^{-1}(B')\subset F$ is an $S^1$-invariant compact neighborhood of the origin.
We define another $S^1$-equivariant continuous map 
\[\psi:=(\id-\Pi_{+})\circ\phi|_{Y\oplus F_0}:Y\oplus F_0\to  F_-\oplus F_0,\]
which satisfies $\psi(0,x)=(0,x)$ for all $x\in F_0$. The $S^1$-invariant subset
\begin{align*}
Z:= \psi^{-1}(0)\cap\partial B,
\end{align*}
satisfies 
\[\phi(Z)\subset \phi(Y\oplus F_0)\cap S_+\subset\UU.\] Let $U\subset Y\oplus F_0$ be an $S^1$-invariant open subset containing $Z$ that is sufficiently small so that $\phi(U)\subset\UU$. Proposition~\ref{p:Borsuk_Ulam} applied to the map $\psi$ implies that $\FR(U)\geq\FR(Y_*)-nN$. Summing up, we obtained the desired lower bound
\[
 \FR(\UU)
 \geq
 \FR(U)
 \geq
 \FR(Y_*)-nN
 \geq
 \FR(\WW_c).
 \qedhere
\]
\end{proof}

\begin{proof}[Proof of Proposition~\ref{p:capacity<spectral}]
Assume that $s_k(B)<c$, so that 
\[\FR(\WW_c)\geq\FR(\{\Psi\leq c\})\geq k.\] 
By Lemma~\ref{l:Sikorav2}, for each $\epsilon>0$ there exists an admissible Hamiltonian $H\in\FF(B)$ such that
\begin{align*}
 \sup_{E_-\oplus E_0\oplus \WW_c} \Phi_H\leq c+\epsilon.
\end{align*}
Lemma~\ref{l:E-_E0_X} implies 
\[\EH(E_-\oplus E_0\oplus \WW_c)\geq \FR(\WW_c)\geq k.\]
Therefore $c_k(B)\leq c_k(H)\leq c+\epsilon$. Since $\epsilon>0$ can be chosen arbitrarily small, we infer that $c_k(B)\leq c$. Finally, since this latter inequality holds for all $c>s_k(B)$, we conclude $c_k(B)\leq s_k(B)$. 
\end{proof}

\subsection{Local maximizers of the capacity ratio}\label{ss:local_max_chat_k}

Let us extend the notion of Besse convex body to the class of smooth star-shaped domains. A $2n$-dimensional smooth star-shaped domain $B\subset\C^n$ is called \emph{Besse} when its boundary $\partial B$, equipped with the canonical contact form $\lambda|_{\partial B}$ of Equation~\eqref{e:Liouville}, is a Besse contact manifold: all its Reeb orbits are closed, and therefore have a common period according to Wadsley theorem \cite{Wadsley:1975aa}. Besse convex bodies, and in particular rational ellipsoids, are a subclass of Besse star-shaped domains. Actually, in dimension 4, these classes are the same up to symplectomorphisms.

\begin{Prop}\label{p:Besse_ellipsoids}
Let $B$ be a $4$-dimensional smooth star-shaped domain. The following conditions are equivalent:
\begin{itemize}
\setlength\itemsep{3pt}

\item[$(i)$] $B$ is Besse.  

\item[$(ii)$] $\partial B$ is strictly contactomorphic to the boundary of a rational ellipsoid $E$, i.e.~there exists a diffeomorphism $\psi:\partial B\to\partial E$ such that $\psi^*\lambda|_{\partial E}=\lambda|_{\partial B}$.  

\item[$(iii)$] There exists a diffeomorphism $\psi:\C^2\to \C^2$ such that $\psi(B)=E$, $\psi^*\omega=\omega$, and $\psi|_{\partial B}^*\lambda=\lambda|_{\partial B}$.  

\item[$(iv)$] $B$ is symplectomorphic to a rational ellipsoid $E$, i.e.~there exists a diffeomorphism $\psi:B\to E$ such that $\psi^*\omega=\omega$.  

\end{itemize}
\end{Prop}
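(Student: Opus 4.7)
The plan is to handle the easy implications first and then focus on the main one. For $(iii) \Rightarrow (ii)$ and $(iii) \Rightarrow (iv)$ I simply restrict $\psi$. For $(ii) \Rightarrow (i)$ I observe that a strict contactomorphism conjugates Reeb vector fields, so the Besse property of $(\partial E, \lambda|_{\partial E})$, automatic because $E$ is rational, transfers to $(\partial B, \lambda|_{\partial B})$. For $(iv) \Rightarrow (i)$, the symplectomorphism $\psi : B \to E$ will carry the characteristic foliation $\ker(\omega|_{T\partial B})$ onto $\ker(\omega|_{T\partial E})$, and since the leaves of the characteristic foliation are precisely the unparametrized Reeb orbits of any defining contact form, the ``all leaves closed'' property transfers from $\partial E$ to $\partial B$; Wadsley's theorem then supplies the common period.

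The substantive work is the implication $(i) \Rightarrow (iii)$. Assuming $B$ is Besse, I would invoke the classification of Besse contact structures on $S^3$ from \cite[Theorem~1.1]{Mazzucchelli:2023aa} to obtain a rational ellipsoid $E\subset\C^2$ and a strict contactomorphism
\[
\psi_0 : (\partial B, \lambda|_{\partial B}) \longrightarrow (\partial E, \lambda|_{\partial E}).
\]
To globalize, I would use the Liouville vector field $Z = \tfrac{1}{2}\sum_j(x_j\partial_{x_j} + y_j\partial_{y_j})$ of $(\C^2,\omega)$, whose flow $\phi^t(w)=e^{t/2}w$ satisfies $(\phi^t)^*\lambda=e^t\lambda$ and provides, for every $w\in\C^2\setminus\{0\}$, a unique decomposition $w=\phi^{t(w)}(z(w))$ with $z(w)\in\partial B$. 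Setting
\[
\psi(w) := \phi^{t(w)}\bigl(\psi_0(z(w))\bigr), \qquad \psi(0):=0,
\]
a direct computation using $\psi_0^*\lambda|_{\partial E}=\lambda|_{\partial B}$ together with the conformality of $\phi^t$ on $\lambda$ will give $\psi^*\lambda=\lambda$ on $\C^2\setminus\{0\}$. Hence $\psi$ will be a smooth symplectomorphism of $\C^2\setminus\{0\}$ sending $B\setminus\{0\}$ to $E\setminus\{0\}$, restricting to $\psi_0$ on $\partial B$, and continuous at the origin.

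The main obstacle will be smoothness at the origin: the map $\psi$ is positively $1$-homogeneous by construction, whereas any smooth positively $1$-homogeneous self-map of $\C^2$ must be linear, which $\psi$ generically is not. I plan to handle this final smoothing step by a straightforward generalization of \cite[Prop.~4.3]{Abbondandolo:2018aa}, modifying $\psi$ inside a thin Liouville shell around the origin via a compactly supported Hamiltonian isotopy built by Moser's trick (the relevant cohomology of the shell vanishes), so as to splice the radial extension to a smooth symplectic model near $0$, without changing $\psi$ on $\partial B$ or outside a small neighborhood of the origin. The resulting diffeomorphism of $\C^2$ will satisfy all the requirements of $(iii)$.
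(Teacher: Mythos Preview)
Your proposal is correct and follows essentially the same route as the paper: the easy implications are handled exactly as you describe (the paper's treatment of $(iv)\Rightarrow(i)$ via characteristic foliations is identical to yours), and for the substantive step the paper also invokes \cite[Theorem~1.1]{Mazzucchelli:2023aa} to produce the strict contactomorphism and then cites a straightforward generalization of \cite[Prop.~4.3]{Abbondandolo:2018aa} to extend it to a global symplectomorphism of $\C^2$. The paper simply quotes that proposition as a black box, whereas you spell out the radial Liouville extension and the smoothing near the origin, but the content is the same.
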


\begin{proof}
Assume that $B$ is a $4$-dimensional Besse star-shaped domain. As it was pointed out in \cite[Theorem~1.1]{Mazzucchelli:2023aa}, the contact classification of Seifert fibrations \cite{Geiges:2018aa, Cristofaro-Gardiner:2020aa} implies that the boundary of $B$ is strictly contactomorphic to the boundary of a rational ellipsoid $E$: there exists a diffeomorphism $\psi:\partial B\to\partial E$ such that $\psi^*\lambda|_{\partial E}=\lambda|_{\partial B}$. A straightforward generalization of \cite[Prop.~4.3]{Abbondandolo:2018aa} implies that $\psi$ can be extended to a symplectomorphism $\psi\in\Symp(\C^2,\omega)$. Therefore conditions (i) and (ii) are equivalent, and  imply condition (iii).

Condition (iii) immediately implies condition (iv). Assume now that a 4-di\-men\-sion\-al smooth star-shaped domain $B$ is symplectomorphic to a rational ellipsoid $E$ via a symplectomorphism $\psi:B\to E$. The Reeb orbits on $\partial B$ and $\partial E$ are leaves of the characteristic foliations $\ker(\omega|_{\partial B})$ and $\ker(\omega|_{\partial E})$ respectively. Since  $\psi^*\omega=\omega$, the restriction $\psi|_{\partial B}$ maps Reeb orbits of $\partial B$ to time-reparametrized Reeb orbits of $\partial E$. Since all Reeb orbits of $\partial E$ are closed, all Reeb orbits of $\partial B$ are closed as well. Therefore condition (iv) implies condition (i).
\end{proof}

Let $B$ be a $4$-dimensional Besse star-shaped domain, and $\psi\in\Symp(\C^2,\omega)$ a symplectomorphism mapping $B$ to a rational ellipsoid $E:=\psi(B)$, and whose restriction $\psi|_{\partial B}:\partial B\to\partial E$ is a strict contactomorphism. Since $\psi|_{\partial B}$ conjugates the Reeb flows on $\partial B$ and $\partial E$, in particular $\sigma(\partial B)=\sigma(\partial E)$.
The monotonicity property of the Ekeland-Hofer capacities implies that $c_k(E)=c_k(B)$. In particular, the capacities $c_k(B)$ cover the whole spectrum $\sigma(\partial B)$, i.e.
\begin{align*}
 \sigma(\partial B)=\big\{c_k(B)\ \big|\ k\geq1\big\},
\end{align*}
since the same holds for the ellipsoid $E$. 
We denote by $\tau(B)$ the minimal common period of the closed Reeb orbit on $\partial B$. For all integers $m\geq1$, we define
\begin{align*}
k_m(B) & := \min\big\{ k\geq1\ \big|\  c_k(B)=m\,\tau(B)\big\},\\
K(B) & :=\big\{k_m(B)\ \big|\ m\geq1  \big\}.
\end{align*}
Clearly, $\tau(E)=\tau(B)$ and $K(E)=K(B)$.
These definitions of $k_m(B)$ and $K(B)$ agree with the ones in Section~\ref{ss:result} in the special case of rational ellipsoids. They also agree with the one given in Section~\ref{ss:local_max_sk} for Besse convex domains, thanks to the following lemma.

\begin{Lemma}
\label{l:ck=sk_Besse}
For each $4$-dimensional Besse convex body $B$, we have $c_k(B)=s_k(B)$ for all $k\geq 1$.
\end{Lemma}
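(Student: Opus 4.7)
The plan is to derive $c_k(B)=s_k(B)$ from the symplectic invariance of the Ekeland-Hofer capacities and the already established identity $s_k(E)=c_k(E)$ for rational ellipsoids, transferring the result back to $B$ via the symplectomorphism produced by Proposition~\ref{p:Besse_ellipsoids}.

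Concretely, by Proposition~\ref{p:Besse_ellipsoids}(iii) there exists $\psi\in\Symp(\C^2,\omega)$ such that $E:=\psi(B)$ is a rational ellipsoid and $\psi|_{\partial B}:\partial B\to\partial E$ is a strict contactomorphism. Applying the monotonicity axiom of $c_k$ to $\psi$ and to $\psi^{-1}$ yields $c_k(B)=c_k(E)$, and Remark~\ref{r:s_k_ellipsoids} gives $c_k(E)=s_k(E)$. Combining these with Proposition~\ref{p:capacity<spectral} (for $k\geq 2$) and with \eqref{e:c1_systole} (for $k=1$), we obtain
\begin{align*}
 s_k(E)=c_k(E)=c_k(B)\leq s_k(B),\qquad\forall k\geq 1.
\end{align*}
The reverse inequality $s_k(B)\leq s_k(E)$ is what one must still prove, and I would obtain it by comparing the $S^1$-equivariant Morse theory of $\Psi_B$ and $\Psi_E$. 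Both are perfect by the Besse property (see Section~\ref{ss:Besse}), so the $S^1$-equivariant cohomology $H^{*}_{S^1}(\{\Psi\leq T\})$ splits as a direct sum of local contributions from the critical manifolds at levels $\leq T$. By Clarke's variational principle and the Morse indices property recalled in Section~\ref{ss:convex_contact_spheres}, each such contribution is determined by the Maslov index of the associated closed Reeb orbit and by the kernel of its linearized return map. Since $\psi|_{\partial B}$ conjugates the Reeb flows on $\partial B$ and $\partial E$, it induces a period-, Maslov- and nullity-preserving bijection between the critical manifolds of $\Psi_B$ and $\Psi_E$; the Fadell-Rabinowitz indices of $\{\Psi_B<c\}$ and $\{\Psi_E<c\}$ therefore agree for every $c$, which forces $s_k(B)=s_k(E)$ for all $k\geq 1$.

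The main obstacle is the invariance of the spectral numbers $s_k$ under a strict contactomorphism of the boundary: a priori $\Psi_B$ depends on the 2-homogeneous Hamiltonian defined on all of $\C^2$, not solely on the Reeb flow on $\partial B$. The perfectness of the Clarke action functional in the Besse case is precisely what allows one to reduce the computation of $H^{*}_{S^1}(\{\Psi\leq T\})$, and hence of the $s_k$, to purely boundary data (Reeb periods, Maslov indices, and dimensions of kernels of linearized return maps), thereby yielding $s_k(B)=s_k(E)=c_k(B)$.
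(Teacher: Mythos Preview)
Your argument is essentially correct but takes a genuinely different route from the paper. The paper never attempts to show $s_k(B)=s_k(E)$. Instead it argues purely combinatorially: both non-decreasing sequences $(s_k(B))_{k\geq1}$ and $(c_k(E))_{k\geq1}$ take values in the same spectrum $\sigma=\sigma(\partial B)=\sigma(\partial E)$, both start at $\min\sigma$, and in both sequences a value equals a multiple of $\tau=\tau(B)=\tau(E)$ if and only if it coincides with its neighbour (this is the spectral Besse characterisation for $s_k$, and the normalisation axiom for $c_k$). These three properties pin the sequence down uniquely, hence $s_k(B)=c_k(E)=c_k(B)$. This avoids any appeal to Proposition~\ref{p:capacity<spectral} or to Remark~\ref{r:s_k_ellipsoids}, and in particular avoids the question of whether $s_k$ is a strict-contactomorphism invariant.

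Your route---transporting the equivariant Morse theory of $\Psi_B$ to that of $\Psi_E$ via $\psi|_{\partial B}$---does work, but the step you flag as the ``main obstacle'' deserves more than you give it. Perfectness lets you compute $\dim H^{2k-2}_{S^1}(\{\Psi<c\})$ from the list of critical manifolds and Morse indices, and since $\dim H^{2j}_{S^1}(\Lambda)=1$ for all $j$ this dimension determines whether $e^{k-1}$ survives. What still needs an argument is that the Morse index of the Clarke functional is a \emph{boundary} invariant preserved by the strict contactomorphism $\psi|_{\partial B}$. The paper only says the Morse index equals the Maslov index of $t\mapsto d\phi_h^t(z)$ in $\mathrm{Sp}(2n)$, which a~priori uses the ambient trivialisation of $T\C^2$. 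One has to observe that along a closed Reeb orbit the loop $t\mapsto d\psi(\phi_{h_B}^t(z))$ in $\mathrm{Sp}(4)$ is contractible (because $\partial B\cong S^3$ is simply connected), so conjugating by it does not change the Maslov index. With that check your approach goes through; it is more conceptual, but the paper's combinatorial argument is shorter and needs no index-theoretic input beyond what is already quoted from \cite{Ginzburg:2021aa}.
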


\begin{proof}
Let $\psi\in\Symp(\C^2,\omega)$ be a symplectomorphism mapping $B$ to a rational ellipsoid $E:=\psi(B)$, so that $\tau=\tau(B)=\tau(E)$.  We already know that both the spectral invariants $s_k(B)$ and the Ekeland-Hofer capacities $c_k(B)=c_k(E)$ cover the whole action spectrum $\sigma=\sigma(\partial B)=\sigma(\partial E)$, i.e.
\begin{align*}
 \sigma = \{s_k(B)\ |\ k\geq1\} = \{c_k(E)\ |\ k\geq1\}.
\end{align*}
Moreover $s_1(B)=c_1(E)=\min\sigma$, $s_k(B)\leq s_{k+1}(B)$, and $c_k(E)\leq c_{k+1}(E)$.
The spectral Besse characterization for the Clarke action functional  (Section~\ref{ss:Besse}) implies that $s_1(B)=\tau$ if and only if $s_1(B)=s_{2}(B)$; moreover, for any $k\geq2$, it implies that $s_k(B)$ is a multiple of $\tau$ if and only if either $s_{k-1}(B)=s_k(B)$ or $s_{k}(B)=s_{k+1}(B)$. These properties fully determine $s_k(B)$ for the Besse convex body $B$. The normalization property of the symplectic $k$-th capacities implies that  $c_k(E)$ satisfy analogous properties: $c_1(E)=\tau$ if and only if $c_1(E)=c_{2}(E)$; for any $k\geq2$,  $c_k(E)$ is a multiple of $\tau$ if and only if either $c_{k-1}(E)=c_k(E)$ or $c_{k}(E)=c_{k+1}(E)$. We conclude that $s_k(B)=c_k(E)$.
\end{proof}

We infer the first half of Theorem~\ref{mt:local_max} from Theorem~\ref{t:main_Clarke}.

\begin{Lemma}
\label{l:star_Besse_loc_max}
Let $B_0$ be a $4$-dimensional Besse star-shaped domain. For each integer $k\in K(B_0)$, any smooth star-shaped domain $B_1$ that is sufficiently $C^3$-close to $B_0$ satisfies $\chat_k(B_1)\leq \chat_k(B_0)$.
\end{Lemma}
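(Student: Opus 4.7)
The plan is to reduce the statement to Theorem~\ref{t:main_Clarke} via a global symplectomorphism, handling the fact that $B_0$ is only star-shaped (not necessarily convex) by straightening it to an ellipsoid first.

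\textbf{Step 1: Straightening.} By Proposition~\ref{p:Besse_ellipsoids}(iii), since $B_0$ is a $4$-dimensional Besse star-shaped domain, there exists a global symplectomorphism $\psi\in\Symp(\C^2,\omega)$ such that $E:=\psi(B_0)$ is a rational ellipsoid. Set $B_1':=\psi(B_1)$. Since $\psi$ is a diffeomorphism of $\C^2$ that is fixed independently of $B_1$, whenever $B_1$ is $C^3$-close to $B_0$, the domain $B_1'$ is $C^3$-close to $E$. As $E$ is strictly convex (positive definite second fundamental form), this $C^2$-closeness forces $B_1'$ to be a smooth convex body as well.

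\textbf{Step 2: Invariance under $\psi$.} Because $\psi^*\omega=\omega$, we have $\vol(B_1')=\vol(B_1)$ and $\vol(E)=\vol(B_0)$. The monotonicity property of the Ekeland-Hofer $k$-capacity under symplectomorphisms gives $\cEH k(B_1')=\cEH k(B_1)$ and $\cEH k(E)=\cEH k(B_0)$. Consequently
\begin{align*}
\chatEH k(B_1)=\chatEH k(B_1'),\qquad \chatEH k(B_0)=\chatEH k(E).
\end{align*}
Moreover, by the definition of $K$, we have $K(E)=K(B_0)$, so the hypothesis $k\in K(B_0)$ is equivalent to $k\in K(E)$.

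\textbf{Step 3: Comparing capacity and spectral ratios.} By Proposition~\ref{p:capacity<spectral} when $k\geq 2$, and by Sikorav's equality \eqref{e:c1_systole} when $k=1$, we have $\cEH k(B_1')\leq s_k(B_1')$. On the Besse convex body $E$, Lemma~\ref{l:ck=sk_Besse} yields $\cEH k(E)=s_k(E)$. Dividing by the (positive) common square root of the volume gives
\begin{align*}
\chatEH k(B_1')\leq \widehat s_k(B_1'),\qquad \chatEH k(E)=\widehat s_k(E).
\end{align*}

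\textbf{Step 4: Applying the spectral result.} Since $E$ is a $4$-dimensional Besse convex body and $k\in K(E)$, Theorem~\ref{t:main_Clarke} applied to the convex perturbation $B_1'$ (which is $C^3$-close to $E$) gives $\widehat s_k(B_1')\leq \widehat s_k(E)$. Chaining the inequalities from Steps 2, 3, and 4 yields
\begin{align*}
\chatEH k(B_1)=\chatEH k(B_1')\leq\widehat s_k(B_1')\leq\widehat s_k(E)=\chatEH k(E)=\chatEH k(B_0),
\end{align*}
which is the desired conclusion.

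The main conceptual point is Step~1: the results of Section~\ref{s:Clarke}, in particular Theorem~\ref{t:main_Clarke}, are only available in the convex category because the Clarke action functional requires convexity. Proposition~\ref{p:Besse_ellipsoids} gives us the freedom to move the (possibly non-convex) Besse star-shaped domain $B_0$ onto a rational ellipsoid $E$ by a global symplectomorphism of $\C^2$, after which any sufficiently $C^3$-small perturbation is automatically convex. The remaining steps are bookkeeping combining the comparison $\cEH k\leq s_k$ (Proposition~\ref{p:capacity<spectral}) with the equality $\cEH k=s_k$ on Besse convex bodies (Lemma~\ref{l:ck=sk_Besse}).
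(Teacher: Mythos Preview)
Your proof is correct and follows essentially the same approach as the paper: straighten $B_0$ to a rational ellipsoid via the global symplectomorphism from Proposition~\ref{p:Besse_ellipsoids}, observe that the perturbation becomes convex, then combine Theorem~\ref{t:main_Clarke} with the comparison $\cEH k\leq s_k$ (Proposition~\ref{p:capacity<spectral}) and the equality on Besse bodies (Lemma~\ref{l:ck=sk_Besse}). Your treatment is slightly more careful in separating the case $k=1$ (where Sikorav's equality~\eqref{e:c1_systole} gives $\cEH 1=s_1$ directly, since Proposition~\ref{p:capacity<spectral} is stated only for $k\geq 2$), but otherwise the argument is the same.
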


\begin{proof}
Fix $k\in K(B_0)$, and a symplectomorphism $\psi\in\Symp(\C^2,\omega)$ mapping $B_0$ to a rational ellipsoid. If a smooth star-shaped domain $B_1$ is $C^3$-close to $B_0$, then $\psi(B_1)$ is $C^3$-close to $\psi(B_0)$. In particular, $\psi(B_1)$ is a smooth convex body, and Theorem~\ref{t:main_Clarke} implies $\widehat s_k(\psi(B_1))\leq \widehat s_k(\psi(B_0))$. Proposition~\ref{p:capacity<spectral} implies $c_k(\psi(B_1))
 \leq 
 s_k(\psi(B_1))$, and Lemma~\ref{l:ck=sk_Besse} implies $c_k(\psi(B_0))
 = 
 s_k(\psi(B_0))$. Therefore we conclude
\[
 \chat_k(B_1)
 =
 \chat_k(\psi(B_1))
 \leq 
 \widehat s_k(\psi(B_1))
 \leq
 \widehat s_k(\psi(B_0))
 =
 \chat_k(\psi(B_0))
 =
 \chat_k(B_0).
\qedhere
\]
\end{proof}

Next, we address the opposite implication in Theorem~\ref{mt:local_max}. The following lemma holds in arbitrary dimension.

\begin{Lemma}
\label{l:max_are_Besse}
Let $B_0$ be a local maximizer of the capacity ratio $\chat_k$ over the space of $2n$-dimensional smooth star-shaped domains endowed with the $C^\infty$ topology. Then $B_0$ is a Besse star-shaped domain and $c_k(B_0)$ is a common period for the Reeb orbits on $\partial B_0$.
\end{Lemma}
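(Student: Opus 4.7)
My plan is to apply an Alvarez Paiva--Balacheff style perturbation argument. Arguing by contradiction, suppose $B_0$ is a local $C^\infty$-maximizer of $\chat_k$ while either $B_0$ fails to be Besse, or $B_0$ is Besse with $c_k(B_0)$ not a common period of its Reeb flow. The goal is to construct an arbitrarily $C^\infty$-small inward deformation $B_s \subsetneq B_0$ satisfying $c_k(B_s) = c_k(B_0)$ and $\vol(B_s) < \vol(B_0)$, which forces $\chat_k(B_s) > \chat_k(B_0)$ and contradicts local maximality.

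First, I would isolate a region to perturb. Let $\mathcal{C} \subset \partial B_0$ denote the union of the traces of all closed Reeb orbits whose period divides $c_k(B_0)$. If $\mathcal{C} = \partial B_0$, then every Reeb orbit is closed, so $B_0$ is Besse; moreover $c_k(B_0)$ is then a period of every Reeb orbit and hence, by Wadsley's theorem, a common period, contradicting our hypothesis. An Arzel\`a--Ascoli argument applied to Reeb trajectories of length at most $c_k(B_0)$ shows $\mathcal{C}$ is closed, so one can fix $p \in \partial B_0 \setminus \mathcal{C}$ and a small open neighborhood $V$ of $p$ in $\partial B_0$ with $\overline V \cap \mathcal{C} = \varnothing$. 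Using a smooth radial cutoff supported in $V$, I would then construct a family $\{B_s\}_{s \in [0, \varepsilon)}$ of smooth star-shaped domains, converging to $B_0$ in $C^\infty$, with $B_s \subsetneq B_0$ for $s > 0$ and $\partial B_s \setminus V = \partial B_0 \setminus V$, arranged so that $\vol(B_0) - \vol(B_s) \geq c\, s$ for some $c > 0$. Since $\lambda|_{\partial B_s} = \lambda|_{\partial B_0}$ on the common part $\partial B_0 \setminus V$, the two Reeb vector fields agree there, and hence every closed Reeb orbit of $\partial B_0$ whose trace avoids $V$---in particular every carrier of $c_k(B_0)$, which lies in $\mathcal{C}$---persists in $\partial B_s$ with the same action. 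Therefore $c_k(B_0) \in \sigma(\partial B_s)$ for every $s \in [0, \varepsilon)$.

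The crux is to upgrade the easy inequality $c_k(B_s) \leq c_k(B_0)$ (from monotonicity of $c_k$ under the inclusion $B_s \hookrightarrow B_0$), combined with the Hausdorff continuity $c_k(B_s) \to c_k(B_0)$, to the equality $c_k(B_s) = c_k(B_0)$ for all small $s > 0$. If this failed along some $s_n \to 0$, spectrality would furnish closed Reeb orbits $\gamma_n \subset \partial B_{s_n}$ of period $T_n := c_k(B_{s_n}) < c_k(B_0)$ with $T_n \to c_k(B_0)$. Arzel\`a--Ascoli extracts a subsequential limit $\gamma_\infty \subset \partial B_0$, a closed Reeb orbit of period $c_k(B_0)$; necessarily $\gamma_\infty \subset \mathcal{C}$ is disjoint from $\overline V$, so for $n$ large the trace of $\gamma_n$ avoids the (shrinking) perturbation region. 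This places $\gamma_n$ inside $\partial B_0$ itself, yielding closed Reeb orbits of $\partial B_0$ with periods $T_n < c_k(B_0)$ accumulating at $c_k(B_0)$. A further Arzel\`a--Ascoli limit combined with a local isolation analysis at the carrier $\gamma_\infty$---either via non-degeneracy or, in the Besse case, via Wadsley rigidity of the return map---rules out such an accumulation from below and yields the desired contradiction.

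Once $c_k(B_s) = c_k(B_0)$ is secured, the strict inequality $\vol(B_s) < \vol(B_0)$ immediately gives $\chat_k(B_s) > \chat_k(B_0)$ for small $s > 0$, contradicting local maximality. The principal obstacle is the $c_k$-preservation step: ruling out short carriers being created in the perturbed boundary. The above limit argument reduces this to isolation of $c_k(B_0)$ from below in $\sigma(\partial B_0)$ near the carrier orbits in $\mathcal{C}$, which is the genuinely delicate point that any rigorous write-up must make precise, e.g.\ by a normal form analysis of the Poincar\'e return map along carrier orbits.
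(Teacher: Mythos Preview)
Your approach shares the overall Alvarez Paiva--Balacheff strategy with the paper, but there is a genuine gap in your $c_k$-preservation step, and the fix is not the ``isolation analysis'' you suggest. You acknowledge that your argument reduces to showing $c_k(B_0)$ is isolated from below in $\sigma(\partial B_0)$ near the carriers, and propose to handle this via non-degeneracy or Wadsley rigidity. But neither tool is available: $B_0$ is a completely arbitrary star-shaped domain (in particular you have no control on the return maps along carriers), and you cannot assume $B_0$ is Besse since that is exactly what you are trying to prove. For a general star-shaped boundary the action spectrum, while nowhere dense, can certainly accumulate on $c_k(B_0)$ from below, so the sequence of short orbits you construct does not yield a contradiction.

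The paper sidesteps this difficulty by a sharper choice of the perturbation region. Rather than taking $V$ disjoint from the set $\mathcal C=\fix(\phi^{c_k(B_0)})$ of points with period exactly $c_k(B_0)$, one picks $\delta>0$ and an open $U\subset\partial B_0$ such that $\phi^t(z)\neq z$ for \emph{every} $z\in U$ and \emph{every} $t\in[c_k(B_0)-\delta,\,c_k(B_0)+\delta]$; this exists simply by continuity, since some $z_0$ has $\phi^{c_k(B_0)}(z_0)\neq z_0$. Perturbing inward with support in $U$, a compactness argument gives $\fix(\phi_{h_\epsilon}^t)=\fix(\phi_{h_0}^t)$ for all $t$ in that interval and all small $\epsilon$, so $\sigma(\partial B_\epsilon)\cap[c_k(B_0)-\delta,c_k(B_0)+\delta]=\sigma(\partial B_0)\cap[c_k(B_0)-\delta,c_k(B_0)+\delta]$. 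The continuous function $\epsilon\mapsto c_k(B_\epsilon)$ then takes values in this \emph{fixed} nowhere-dense set, hence is locally constant. No isolation hypothesis on the spectrum is needed; the only input beyond continuity and spectrality is the standard fact that $\sigma(\partial B_0)$ is nowhere dense.
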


\begin{proof}
The proof builds on an argument originally due to Alvarez Paiva and Balacheff \cite{Alvarez-Paiva:2014aa}.
Let $B_0$ be a $2n$-dimensional smooth star-shaped domain, and $h_0:\C^n\to[0,\infty)$  the 2-homogeneous Hamiltonian such that $h_0^{-1}(1)=\partial B_0$. The restriction of its Hamiltonian flow $\phi_{h_0}^t|_{\partial B_0}$ is the Reeb flow of $\partial B_0$. Assume by contradiction that $c_k(B_0)$ is not a common period for the Reeb orbits on the boundary $\partial B_0$. Namely, there exists $\delta>0$ and a non-empty open subset $U\subset\partial B_0$ such that 
\[\phi_h^{t}(z)\neq z,\qquad\forall z\in U,\ t\in[c_k(B_0)-\delta,c_k(B_0)+\delta].\] 
Let $\chi:\partial B_0\to[0,\infty)$ be a smooth function supported in $U$ and not identically zero. For $\epsilon>0$, let $h_\epsilon:\C^n\to[0,\infty)$ be the 2-homogeneous Hamiltonian such that $h_\epsilon|_{\partial B_0}=h_0|_{\partial B_0}+\epsilon\chi$. For all $\epsilon>0$ small enough, the sublevel set $B_\epsilon:=h_\epsilon^{-1}[0,1]$ is a smooth star-shaped domain strictly contained in $B_0$, and in particular 
\[\vol(B_\epsilon)<\vol(B_0).\] 
By our choice of $\chi$, for all $\epsilon>0$ small enough we have
\begin{align*}
 \fix(\phi_{h_\epsilon}^t)= \fix(\phi_{h_0}^t),\qquad\forall t\in[c_k(B_0)-\delta,c_k(B_0)+\delta].
\end{align*}
Therefore the continuous function $\epsilon\mapsto c_k(B_\epsilon)$ takes values into the intersection $\sigma(\partial B_0)\cap[c_k(B_0)-\delta,c_k(B_0)+\delta]$ for all $\epsilon\geq 0$ small enough. Since the action spectrum $\sigma(\partial B_0)$ is  nowhere dense (see, e.g., \cite[Prop.~7.4]{Sikorav:1990aa}), we conclude that $c_k(B_\epsilon)=c_k(B_0)$, and therefore $\chat_k(B_\epsilon)>\chat_k(B_0)$, for all $\epsilon>0$ small enough.
\end{proof}

\begin{proof}[Proof of Theorem~\ref{mt:local_max}]
We consider the capacity ratio $\chat_k$ over the space of 4-dimen\-sional smooth star-shaped domains, endowed with the $C^3$ topology. Let $B$ be a 4-dimensional smooth star-shaped domain symplectomorphic to a rational ellipsoid $E$. Proposition~\ref{p:Besse_ellipsoids} implies that $B$ is Besse, and Lemma~\ref{l:star_Besse_loc_max} asserts that  $B$ is a local maximizer of $\chat_k$ for all $k\in K(B)=K(E)$. 

Conversely, assume that a 4-dimensional smooth star-shaped domains $B_0$ is a local maximizer of $\chat_k$. Lemma~\ref{l:max_are_Besse} implies that $B_0$ is Besse, and therefore Proposition~\ref{p:Besse_ellipsoids} implies that there exists $\psi\in\Symp(\C^2,\omega)$ mapping $B_0$ to a rational ellipsoid $E(\aaa)$. For each $\bbb$ sufficiently close to $\aaa$, the preimage $B_1:=\psi^{-1}(E(\bbb))$ is a smooth star-shaped domain $C^3$-close to $B_0$, and therefore
\[
\chat_k(E(\bbb)) = \chat_k(B_1) \leq \chat_k(B_0) = \chat(E(\aaa)).
\]
Namely, $E(\aaa)$ is a local maximizer of $\chat_k$ over the space of 4-dimensional ellipsoids, and Proposition~\ref{mp:ellipsoids}(i) implies that $k\in K(E(\aaa))=K(B_0)$.
\end{proof}

We can also complete Theorem~\ref{t:main_Clarke}, fully characterizing the local maximizers of the spectral ratios $\widehat s_k$ over the space of 4-dimentional smooth convex domains.

\begin{Thm}\label{t:final_Clarke}
 On the space of $4$-dimensional smooth convex domains endowed with the $C^3$ topology, the local maximizers of $B\mapsto \widehat s_k(B)$ are precisely those domains symplectomorphic to a $2n$-dimensional rational ellipsoid $E$ with $k\in K(E)$.
\end{Thm}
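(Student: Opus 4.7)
The proof will combine Theorem \ref{t:main_Clarke} for the ``if'' direction with an adaptation, for the ``only if'' direction, of the argument used in Section \ref{ss:local_max_chat_k} to establish Theorem \ref{mt:local_max}.

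For the ``if'' direction, suppose $B$ is a smooth convex body symplectomorphic to a rational ellipsoid $E$ with $k\in K(E)$. Proposition \ref{p:Besse_ellipsoids} guarantees that $B$ is Besse, and the compatibility of the two definitions of $K$ for Besse convex bodies noted in Section \ref{ss:local_max_chat_k} yields $K(B)=K(E)$, so $k\in K(B)$. Theorem \ref{t:main_Clarke} then directly gives $\widehat s_k(B_1) \leq \widehat s_k(B)$ for every smooth convex body $B_1$ sufficiently $C^3$-close to $B$.

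For the ``only if'' direction, assume $B_0$ is a local maximizer of $\widehat s_k$. I would first establish a spectral analogue of Lemma \ref{l:max_are_Besse}, asserting that $B_0$ must be Besse with $s_k(B_0)$ a common Reeb period on $\partial B_0$. The Alvarez Paiva--Balacheff argument carries over verbatim with $c_k$ replaced by $s_k$: if $s_k(B_0)$ were not a common period, pick $\delta>0$ and a non-empty open subset $U\subset\partial B_0$ free of points lying on Reeb orbits with periods in $[s_k(B_0)-\delta,s_k(B_0)+\delta]$, a nonnegative bump function $\chi$ supported in $U$, and set $h_\epsilon:=h_0+\epsilon\chi$ extended $2$-homogeneously. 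For small $\epsilon>0$, the body $B_\epsilon:=h_\epsilon^{-1}[0,1]$ remains smoothly convex (convexity is a $C^2$-open condition, since smooth convex bodies are strictly convex) with $\vol(B_\epsilon)<\vol(B_0)$, while the action spectrum near $s_k(B_0)$ is unchanged; the spectrality and $C^0$-continuity of $s_k$ then force $s_k(B_\epsilon)=s_k(B_0)$, giving $\widehat s_k(B_\epsilon)>\widehat s_k(B_0)$ and contradicting local maximality.

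Once $B_0$ is Besse, Proposition \ref{p:Besse_ellipsoids} supplies $\psi\in\Symp(\C^2,\omega)$ mapping $B_0$ to a rational ellipsoid $E=E(\bm a)$. To conclude $k\in K(E)$, I would show that $E$ is itself a local maximizer of $\widehat s_k$ over the space of ellipsoids, and then invoke Proposition \ref{mp:ellipsoids}(i) together with the identification $\widehat s_k=\chat_k$ on ellipsoids granted by Remark \ref{r:s_k_ellipsoids}. For $\bm b$ close to $\bm a$, the preimage $B_1:=\psi^{-1}(E(\bm b))$ is $C^\infty$-close to $B_0$ and thus still a smooth convex body, so the chain
\begin{align*}
 \widehat s_k(E(\bm b))
 &= \frac{c_k(E(\bm b))}{\vol(E(\bm b))^{1/2}}
 = \frac{c_k(B_1)}{\vol(B_1)^{1/2}} \\
 &\leq \widehat s_k(B_1)
 \leq \widehat s_k(B_0)
 = \widehat s_k(E)
\end{align*}
is valid, its successive steps using Remark \ref{r:s_k_ellipsoids}, symplectic invariance of $c_k$ and of $\vol$, Proposition \ref{p:capacity<spectral}, the local-maximality hypothesis, and Lemma \ref{l:ck=sk_Besse} at the final endpoint. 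The main delicate point throughout both directions is keeping all perturbations inside the space of smooth convex bodies, which ultimately rests on the strict convexity built into the definition and the $C^2$-openness of convexity; beyond that, the proof is a careful coordination of symplectic invariance with the inequality $c_k\leq s_k$ and its saturation on Besse bodies.
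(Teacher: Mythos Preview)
Your proof is correct and follows essentially the same approach as the paper: the ``if'' direction is Theorem~\ref{t:main_Clarke} plus Proposition~\ref{p:Besse_ellipsoids}, and the ``only if'' direction runs the Alvarez Paiva--Balacheff argument for $s_k$ (with the convexity-preservation caveat you note), then pulls back nearby ellipsoids via the symplectomorphism and combines $c_k\leq s_k$ with $c_k=s_k$ on Besse bodies to reduce to Proposition~\ref{mp:ellipsoids}(i). The only cosmetic difference is that you phrase the final chain in terms of $\widehat s_k$ and Remark~\ref{r:s_k_ellipsoids}, whereas the paper writes it with $\chat_k$; the content is identical.
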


\begin{proof}
Theorem~\ref{t:main_Clarke}, together with Proposition~\ref{p:Besse_ellipsoids}, provides one implication: the smooth convex bodies $B$ symplectomorphic to a $4$-dimensional rational ellipsoid $E$ are local maximizers of $\widehat s_k$ for all $k\in K(B)=K(E)$.

Conversely, let $B_0$ be a smooth convex domain that is a local maximizer of $\widehat s_k$. An analogous argument to the one in the proof of Lemma~\ref{l:max_are_Besse} implies that $B_0$ is Besse, and Proposition~\ref{p:Besse_ellipsoids} provides a symplectomorphism $\psi\in\Symp(\C^2,\omega)$ mapping $B_0$ to a rational ellipsoid $E(\aaa)$. For each $\bbb$ sufficiently close to $\aaa$, the preimage $B_1:=\psi^{-1}(E(\bbb))$ is a smooth convex body $C^3$-close to $B_0$, and therefore $\widehat s_k(B_1)\leq \widehat s_k(B_0)$.  Proposition~\ref{p:capacity<spectral} implies $c_k(B_1)\leq s_k(B_1)$, and Lemma~\ref{l:ck=sk_Besse} implies $s_k(B_0) = c_k(B_0)$. Overall, we obtained
\begin{align*}
\chat_k(E(\bbb))
=
\chat_k(B_1)
\leq
\widehat s_k(B_1)
\leq
\widehat s_k(B_0)
=
\chat_k(B_0)
=
\chat_k(E(\aaa)).
\end{align*}
Namely, $E(\aaa)$ is a local maximizer of $\chat_k$ over the space of 4-dimensional ellipsoids.  Proposition~\ref{mp:ellipsoids}(i) implies that $k\in K(E(\aaa))=K(B_0)$.
\end{proof}

\section{Maximizers of the capacity ratios among ellipsoids}\label{s:ellipsoids}

We recall that, throughout this paper, we compute the volumes of domains in $\C^n$ by integrating the volume form $\omega^n$; namely, for a $2n$-dimensional domain $K$, the volume $\vol(K)=\vol(K,\omega)$ is $n!$ times the usual $2n$-dimensional Euclidean volume. For a $2n$-dimensional ellipsoid $E(\aaa)$, with $\aaa=(a_1,...,a_n)$, we have
\begin{align*}
\vol(E(\aaa))=a_1...a_n
\end{align*}
All symplectic $k$-capacities coincide over the space of ellipsoids, according to their normalization property (see Section~\ref{ss:result}). Throughout this section, we denote by $c_k$ any symplectic $k$-capacity.

\begin{proof}[Proof of Proposition~\ref{mp:ellipsoids}\,$(i)$]
Let $\aaa=(a_1,...,a_n)$ be a local maximizer of the function $\aaa\mapsto\chat_k(E(\aaa))$, and assume without loss of generality that $0<a_1\leq...\leq a_n<\infty$. We first prove that every $a_i$ divides $c_k(E(\aaa))$. Indeed, if we had $r a_i< c_k(E(\aaa))< (r+1)a_i$ for some integer $r\geq0$ and some minimal $i\in\{1,...,n\}$, then for any $t<1$ sufficiently close to $1$ the configuration $\bbb(t)=(b_1(t),...,b_n(t))$ given by
\begin{align*}
b_j(t)
= 
\left\{
  \begin{array}{@{}ll}
    t\,a_i, & \mbox{ if }  j=i, \vspace{5pt} \\
    a_j, & \mbox{ if } j\neq i \\ 
  \end{array}
\right.
\end{align*}
would satisfy $c_k(E(\bbb(t)))=c_k(E(\aaa))$ and $\vol(E(\bbb(t)))<\vol(E(\aaa))$, contradicting the maximality of $\aaa$. 

Therefore, $c_k(E(\aaa)) = r_1 a_1 = ... = r_n a_n$
for some integers $r_1\geq r_2\geq ... \geq r_n\geq 1$. We now prove that \[c_k(\aaa)=c_{k+n-1}(\aaa),\]
which readily implies $r_1+ ... + r_n = k+n-1$ and therefore $k\in K(E(\aaa))$.
Assume by contradiction that $c_k(\aaa)<c_{k+n-1}(\aaa)$. This implies that $c_k(E(\aaa))=c_{k-1}(E(\aaa))$. For any $t<1$ sufficiently close to $1$, the configuration $\bm b(t)=(ta_1,a_2,...,a_n))$ has capacity $c_k(E(\bbb(t)))=c_k(E(\aaa))$, but lower volume $\vol(E(\bbb(t)))<\vol(E(\aaa))$, contradicting the maximality of $\aaa$.

We are left to prove the converse implication. Let $E(\aaa)$ be a rational ellipsoid. As usual, assume without loss of generality that $a_1\leq...\leq a_n$. Fix any integer $k\in K(E(\aaa))$. There exist integers $r_i\geq1$ such that $r_1a_1=...=r_na_n$ and $k=r_1+...+r_n-n+1$. For all $\bbb$ sufficiently close to $\aaa$, we have $\chat_k(E(\bbb))=f(\bbb)$, where $f$ is the $0$-homogeneous function
\begin{align*}
 f(\bbb)=\frac{\min\{ r_1b_1,...,r_nb_n \}}{(b_1...b_n)^{1/n}}.
\end{align*}
As usual, assume without loss of generality that $b_1\leq...\leq b_n$.
We set
\[t := \frac{r_1a_1}{\min\{ r_1b_1,...,r_nb_n \}}.\]
Notice that $t\,b_i\geq a_i$. Therefore, for all $\bbb$ sufficiently close to $\aaa$, we have
\[
\chat_k(E(\bbb))
=
f(\bbb)
=
f(t\,\bbb)
=
\frac{r_1a_1}{(t\,b_1...t\,b_n)^{1/n}}
\leq
\frac{r_1a_1}{(a_1...a_n)^{1/n}}
=
f(\aaa)=\chat_k(E(\aaa)).
\qedhere
\]
\end{proof}

\begin{proof}[Proof of Proposition~\ref{mp:ellipsoids}\,$(ii)$]
For configurations $\aaa=(a_1,...,a_n)$ with 
\begin{align}
\label{e:ordered_configuration}
0<a_1\leq...\leq a_n<\infty, 
\end{align}
we have
\[\chat_k(E(\aaa))\leq\frac{ka_1}{(a_1...a_n)^{1/n}}\ttoup_{a_n\to\infty}0. \]
Therefore, the function $\aaa\mapsto\chat_k(E(\aaa))$ achieves its maximum at some $\aaa$, which we assume to satisfy~\eqref{e:ordered_configuration} without loss of generality. By Proposition~\ref{mp:ellipsoids}(i), we have $k\in K(E(\aaa))$. Namely
\begin{align*}
c_k(E(\aaa)) = c_{k+n-1}(E(\aaa)) = s_1 a_1 = ... = s_n a_n
\end{align*}
for some integers $s_1\geq s_2\geq ... \geq s_n\geq 1$. 

We claim that 
\begin{align*}
s_i-s_j\leq 1,
\qquad
\forall 1\leq i<j\leq n.
\end{align*}
Otherwise, we can find $i<j$ such that $s_i-s_j\geq 2$, $s_i-s_{i+1}\geq1$, and $s_{j-1}-s_j\geq1$ (the case $i=j-1$ is allowed). The configuration $\bm b=(b_1,...,b_n)$ given by
\begin{align*}
b_l
= 
\left\{
  \begin{array}{@{}ll}
    \frac{s_i}{s_i-1}a_i, & \mbox{ if }  l=i, \vspace{5pt} \\
    \frac{s_j}{s_j+1}a_j, & \mbox{ if }  l=j, \vspace{5pt} \\
    a_l, & \mbox{ if } l\neq i,j \\ 
  \end{array}
\right.
\end{align*}
has the same value $c_k(E(\bbb))=c_k(E(\aaa))$, but 
\begin{align*}
\vol(E(\bbb))
= 
\frac{s_i s_j}{(s_i-1)(s_j+1)} \vol(E(\aaa))
=
\frac{s_i s_j}{s_is_j + \underbrace{s_i -s_j -1}_{\geq1}} \vol(E(\aaa))
<
\vol(E(\aaa)),
\end{align*}
contradicting the maximality of $\aaa$.

Summing up, we proved that
\begin{align*}
c_k(E(\aaa)) 
& =
c_{k+n-1}(E(\aaa)) 
= (s+2) a_1 
= (s+2) a_2
= ... 
= (s+2) a_{i-1}\\
& = 
(s+1) a_i 
= (s+1) a_{i+1}
=...
= (s+1) a_n
\end{align*}
for some integers $s\geq 0$ and $i\in\{1,...,n\}$. Since $c_k(E(\aaa))=c_{k+n-1}(E(\aaa))$, we have 
\begin{align*}
k+n-1= (s+2)(i-1) + (s+1)(n-i+1),
\end{align*}
that is,
$k = sn + i$,
which implies that $q=s$ and $r=i$.
\end{proof}

\section{Maximizers of the capacity ratios among toric domains}\label{s:toric}

Throughout this section, we denote by $c_k$ any symplectic $k$-capacity satisfying the extra ``closed Reeb orbits'' assumption (see Section~\ref{ss:result}). We shall consider 4-dimensional toric domains $X_\Omega$ whose profile $\Omega$ is the subgraph of monotone non-increasing functions $f:[0,x_0]\to[0,\infty)$ such that $f|_{[0,x_0)}>0$ and $f(x_0)=0$, i.e.
\begin{align*}
\Omega=\Omega_f:=\big\{ (x,y) \in \R^2\ \big|\ x\in[0,x_0],\ y\in[0,f(x)] \big\}.
\end{align*}

\subsection{Concave toric domains}
Let $X_\Omega$ be a 4-dimensional concave toric domain. Its profile $\Omega=\Omega_f\subset[0,\infty)^2$ must be the subgraph of a (monotone decreasing) convex function $f:[0,x_0]\to[0,\infty)$ such that $f|_{[0,x_0)}>0$ and $f(x_0)=0$.
As usual, we denote by $\vol(X_\Omega)=\vol(X_\Omega,\omega)$ the volume of $X_\Omega$ computed by integrating the volume form $\omega^2$. This volume $\vol(X_\Omega)$ is twice the Euclidean area of $\Omega$.

Gutt and Hutchings \cite[Theorem~1.14]{Gutt:2018aa} provided a formula for the higher capacities of $2n$-dimensional concave toric domains. For our 4-dimensional $X_\Omega$ the formula reads:
\begin{align}
\label{e:GH_concave}
c_k(X_\Omega)
=
\max\Big\{ 
 [v]_\Omega\ 
\Big|\ v=(v_1,v_2)\in\{1,...,k\}^2,\ v_1+v_2=k+1
\Big\},
\end{align}
where 
\begin{align*}
[v]_\Omega=\min\big\{\langle v,w\rangle\ \big|\ w\in\mathrm{graph}(f)\big\}. 
\end{align*}
Proposition~\ref{mp:toric}(i) is a direct consequence of this formula and of the properties of symplectic $k$-capacities.

\begin{proof}[Proof of Proposition~\ref{mp:toric}$(i)$]
Let $v\in\{1,...,k\}^2$ be a vector such that $c_k(X_\Omega)=[v]_\Omega$, and $w=(w_1,w_2)\in\mathrm{graph}(f)$ be a vector such that $[v]_\Omega=\langle v,w\rangle$. We denote by $g:\R\to\R$ an affine function such that $g(w_1)=w_2$ and $g\leq f$. We set 
$a_2:=g(0)$ and $a_1:=a_2/g'(0)$, so that $g(a_1)=0$.
The ellipsoid $E(a_1,a_2)=X_{\Upsilon}$ is a concave toric domain whose associated profile $\Upsilon=\Omega_g$ satisfies $\Upsilon\subseteq\Omega$, and therefore
\begin{align*}
\vol(E(a_1,a_2))=2\, \area(\Upsilon)\leq2\,\area(\Omega)=\vol(X_\Omega).
\end{align*}
Moreover, by the formula~\eqref{e:GH_concave} applied to $E(a_1,a_2)$, we have
\begin{align*}
c_k(E(a_1,a_2))
\geq
[v]_\Upsilon = [v]_\Omega = c_k(X_\Omega).
\end{align*}
Overall, we obtained
$\chat_k(E(a_1,a_2))\geq\chat_k(X_\Omega)$. Finally, by Proposition~\ref{mp:ellipsoids}, the function $(a_1,a_2)\mapsto\chat_k(E(a_1,a_2))$ achieves its maximum at \[(a_1,a_2)=(\big\lceil\tfrac k2\big\rceil,\big\lceil\tfrac {k+1}2\big\rceil\big).
\qedhere\]
\end{proof}

\subsection{Convex toric domains}
Let $X_\Omega$ be a 4-dimensional convex toric domain. Its profile $\Omega=\Omega_f\subset[0,\infty)^2$ must be the subgraph of a concave and non-increasing function $f:[0,x_0]\to[0,y_0]$ such that $f(0)=y_0>0$ and $f(x_0)=0$. We consider the convex body 
\[\widehat\Omega:=\big\{(x_1,x_2)\in\R^2\ \big|\ (|x_1|,|x_2|)\in\Omega\big\}.\]
We denote by $\|\cdot\|_\Omega$ the norm on $\R^2$ whose unit ball is the polar of $\widehat\Omega$, i.e.
\begin{align*}
 \|v\|_\Omega := \max_{w\in\Omega}\ \langle v,w\rangle,\qquad\forall v\in[0,\infty)^2.
\end{align*}
We set $V_k:=\big\{v_0,...,v_k\big\}$, where $v_j=(j,k-j)\in[0,k]^2$. 
Gutt and Hutchings \cite[Theorem~1.16]{Gutt:2018aa} provided a formula for the higher capacities of $2n$-dimensional convex toric domains, which in the special case of our 4-dimensional $X_\Omega$ reads:
\begin{align*}
c_k(\Omega)
=
\min_{v\in V_k}
\ \|v\|_\Omega.
\end{align*}

\begin{Example}\label{ex:polydisk}
For each $a,b>0$, the polydisk $P(a,b):=E(a)\times E(b)=X_\Omega$ is a convex toric domain with profile $\Omega=[0,a]\times[0,b]$. Therefore $c_k(P(a,b))=k\min\{a,b\}$, $\vol(P(a,b))=2ab$, and
\begin{align*}
 \chat_k(P(a,b))
 =
 k\frac{\min\{a,b\}}{\sqrt{2ab}}.
\end{align*}
The polydisk that maximizes $\chat_k$ is (up to rescaling) $P(1,1)$, for which $\chat_k(P(1,1))=k/\sqrt{2}$.
\hfill\qed
\end{Example}

The proof of Proposition~\ref{mp:toric}(ii) requires several lemmas.

\begin{Lemma}
Over the space of $4$-dimensional convex toric domains, the capacity ratios $\chat_k$ admit a global maximizer.
\end{Lemma}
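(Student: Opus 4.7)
The plan is to extract a Hausdorff-convergent subsequence from a maximizing sequence of profiles via Blaschke's selection theorem, and then to argue that the Hausdorff limit is itself the profile of a convex toric domain at which the supremum of $\chat_k$ is attained.

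First, I would use conformality to normalize. Since $c_k$ scales quadratically under $\Omega\mapsto \lambda\Omega$ while $\vol(X_\Omega)$ scales by $\lambda^2$ as well (in the sense appropriate for the symplectic volume), the ratio $\chat_k$ is scale-invariant, and we may restrict attention to profiles with $\area(\Omega)=1/2$, i.e.\ $\vol(X_\Omega)=1$. The main step is then to establish uniform two-sided a priori bounds on the intercepts $x_0=x_0(\Omega)$ (the largest $x$ with $(x,0)\in\Omega$) and $y_0=y_0(\Omega)$ (the largest $y$ with $(0,y)\in\Omega$) along a maximizing sequence. Since $(x_0,0),(0,y_0)\in\Omega$ and $(k,0),(0,k)\in V_k$, one has $\|(k,0)\|_\Omega\geq k x_0$ and $\|(0,k)\|_\Omega\geq k y_0$, so
\begin{align*}
c_k(X_\Omega)\leq\min\big\{\|(k,0)\|_\Omega,\ \|(0,k)\|_\Omega\big\}=k\min\{x_0,y_0\}.
\end{align*}
Since $\sup\chat_k>0$ (as witnessed e.g.\ by $P(1,1)$ from Example~\ref{ex:polydisk}), any maximizing sequence $\Omega_n$ satisfies $\min\{x_0(n),y_0(n)\}\geq\delta$ for some $\delta>0$. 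For the complementary upper bound, the concavity of the profile function $f$ forces its graph to lie above the chord from $(0,y_0)$ to $(x_0,0)$, giving $\area(\Omega)\geq x_0 y_0/2$ and hence $x_0 y_0\leq 1$ under the normalization; combined with the previous estimate this yields $\max\{x_0(n),y_0(n)\}\leq 1/\delta$. Thus all profiles in a maximizing sequence lie inside the fixed box $[0,1/\delta]^2$.

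Next I would apply Blaschke's selection theorem to extract a subsequence $\Omega_n\to\Omega_\infty$ in Hausdorff distance, with $\Omega_\infty$ a compact convex subset of $[0,1/\delta]^2$. The three defining features of a convex toric profile---convexity, containment in $[0,\infty)^2$, and the lower-closedness $(x,y)\in\Omega\Rightarrow[0,x]\times[0,y]\subseteq\Omega$, which is equivalent to $\Omega$ being the subgraph of a concave non-increasing function---all pass to Hausdorff limits, the last one by a straightforward coordinatewise truncation argument applied to approximating sequences in $\Omega_n$. Continuity of Lebesgue measure on convex bodies with non-empty interior gives $\area(\Omega_\infty)=1/2$, so $\Omega_\infty$ is a non-degenerate profile of the required form.

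Finally, I would invoke Hausdorff continuity of the capacity ratio. The support function $\Omega\mapsto\|v\|_\Omega$ is $\|v\|$-Lipschitz in the Hausdorff metric for each fixed $v$, and since $V_k$ is a fixed finite set the Gutt--Hutchings formula displays $c_k(X_\Omega)=\min_{v\in V_k}\|v\|_\Omega$ as a continuous function of $\Omega$. Combined with continuity of volume this gives $\chat_k(X_{\Omega_\infty})=\lim_n\chat_k(X_{\Omega_n})=\sup\chat_k$, exhibiting $\Omega_\infty$ as the desired global maximizer. The main technical obstacle I expect is precisely the derivation of the two-sided a priori control on the intercepts: without the capacity lower bound combined with the concavity estimate $x_0 y_0\leq 2\area(\Omega)$, a maximizing sequence could in principle degenerate, either collapsing against an axis or escaping to infinity; everything else then reduces to standard convex-geometric compactness and continuity tools.
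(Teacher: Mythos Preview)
Your proof is correct and follows essentially the same compactness-plus-continuity strategy as the paper: the paper normalizes $y_0=1$ and bounds $x_0\in[1/2,2]$ by direct comparison with $P(1,1)$, while you normalize the area and bound both intercepts via $x_0 y_0\leq 2\,\area(\Omega)$, but the endgame (Hausdorff compactness of the admissible profiles and continuity of $\chat_k$ via the Gutt--Hutchings formula) is identical. One cosmetic slip: the inequalities $\|(k,0)\|_\Omega\geq kx_0$ and $\|(0,k)\|_\Omega\geq ky_0$ you record point the wrong way for the displayed conclusion; what you need (and what holds, since $x_0,y_0$ are the coordinate maxima over $\Omega$) is the equality $\|(k,0)\|_\Omega=kx_0$, $\|(0,k)\|_\Omega=ky_0$.
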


\begin{proof}
Consider a profile $\Omega=\Omega_f$ that is the subgraph of a concave and non-increasing function $f:[0,x_0]\to[0,y_0]$ such that $f(0)=y_0$ and $f(x_0)=0$. Since the capacity ratios are scale invariant, we can always assume that $y_0=1$. 
Assume in addition that $x_0\leq1$. The volume of the associated convex toric domain $X_\Omega$ is bounded from below as 
\begin{align*}
 \vol(X_\Omega)=2\,\area(\Omega)\geq x_0.
\end{align*}
The $k$-th capacity of $X_\Omega$ is bounded from above as
\begin{align*}
c_k(X_\Omega)
=
\min_{j\in\{0,...,k\}}\max_{w\in\Omega}\ \langle v_j,w\rangle
\leq
\min_{j\in\{0,...,k\}} \big( j x_0 + (k-j) \big)
\leq k x_0.
\end{align*}
Therefore $\chat_k(X_\Omega)\leq k\sqrt{x_0}<\chat_k(P(1,1))$ if $x_0<1/2$. Analogously, $\chat_k(X_\Omega)\leq k/\sqrt 2$ if $x_0>2$.

Let $\mathcal{T}$ be the space of 4-dimensional convex toric domains, and $\mathcal{T'}\subset\mathcal{T}$ the subspace of those $X_{\Omega_f}$ such that $f:[0,x_0]\to[0,1]$ is concave, non-increasing, and satisfies $f(0)=1$ and $x_0\in[1/2,2]$. The conclusion of the previous paragraph implies
\begin{align*}
\sup_{\mathcal{T}}\ \chat_k
=
\sup_{\mathcal{T}'}\ \chat_k.
\end{align*}
The space $\mathcal{T}'$ is compact for the Hausdorff topology, and since $\chat_k|_{\mathcal{T}}$ is continuous, its restriction to $\mathcal{T}'$ achieves its maximum.
\end{proof}

We set
\begin{align*}
\Delta_\pm := \big\{ (x,y)\in(0,\infty)^2\ \big|\ \pm x<\pm y \big\},\qquad
\Delta := \big\{ (x,x)\in\R^2\ \big|\ x>0 \big\},
\end{align*}
and
\begin{align*}
\partial_\pm\Omega :=\Delta_\pm\cap\partial\Omega,\qquad
 \partial_0\Omega :=\Delta\cap\partial\Omega,\qquad
 \partial_*\Omega :=\partial_+\Omega\cup\partial_0\Omega\cup\partial_-\Omega.
\end{align*}
Notice that $\partial_0\Omega$ is a singleton, and $\Omega$ is the convex set enclosed by $\partial_*\Omega$ together with two segments on the $x$ and $y$ axes.

\begin{Lemma}\label{l:bound_corners}
Assume that $X_\Omega$ maximizes $\chat_k$ over the space of convex toric domains. Then $\partial_*\Omega$ is piecewise linear with at most $k-1$ corners.
\end{Lemma}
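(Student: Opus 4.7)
The plan is to construct an auxiliary convex toric domain $X_{\Omega'}$ with $\Omega'\subseteq\Omega$ satisfying $c_k(X_{\Omega'})\geq c_k(X_\Omega)$, such that $\partial_*\Omega'$ is piecewise linear with at most $k-1$ corners; maximality of $\chat_k$ at $X_\Omega$ will then force $\Omega'=\Omega$, transferring the corner bound to $\partial_*\Omega$.

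For the construction, for each $j\in\{1,\dots,k-1\}$ I first pick a point $w^{(j)}\in\Omega$ realizing $\|v_j\|_\Omega=\max_{w\in\Omega}\langle v_j,w\rangle$ (which exists by compactness of $\Omega$, the coefficients of $v_j$ being strictly positive for these indices). I then set
\[
V:=\{(0,0),(0,y_0),(x_0,0),w^{(1)},\dots,w^{(k-1)}\},
\]
and define $\Omega'$ as the convex hull of $V$. Each generator lies in the convex set $\Omega$, so $\Omega'\subseteq\Omega$. A short verification will show that $\Omega'$ is automatically down-closed: every $(x,y)\in\Omega'$ has $y\leq y_0$, so $(0,y)$ lies on the left edge of $\Omega'$ and convexity gives $[0,x]\times\{y\}\subset\Omega'$; the argument for the vertical direction is symmetric. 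Consequently $\Omega'=\Omega_{f'}$ for a concave non-increasing $f'$, and $X_{\Omega'}$ is a bona fide convex toric domain.

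The capacity comparison will come from the Gutt-Hutchings formula $c_k(X_{\Omega'})=\min_{v\in V_k}\|v\|_{\Omega'}$: the inclusions $(0,y_0),(x_0,0),w^{(1)},\dots,w^{(k-1)}\in\Omega'$ yield $\|v_0\|_{\Omega'}\geq ky_0=\|v_0\|_\Omega$, $\|v_k\|_{\Omega'}\geq kx_0=\|v_k\|_\Omega$, and $\|v_j\|_{\Omega'}\geq\langle v_j,w^{(j)}\rangle=\|v_j\|_\Omega$ for $1\leq j\leq k-1$, so $c_k(X_{\Omega'})\geq c_k(X_\Omega)$. Combined with $\vol(X_{\Omega'})=2\area(\Omega')\leq\vol(X_\Omega)$, this gives $\chat_k(X_{\Omega'})\geq\chat_k(X_\Omega)$. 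Global maximality of $X_\Omega$ will upgrade these inequalities to equalities, and the sandwich $\Omega'\subseteq\Omega$ with $\area(\Omega')=\area(\Omega)$ will force $\Omega'=\Omega$.

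Finally, $\Omega=\Omega'$ is the convex hull of a finite point set, so $\partial_*\Omega$ is piecewise linear; its corners, being the non-smooth points in the open positive quadrant, form a subset of the vertices of $\Omega'$ lying in $(0,\infty)^2$. Among the $k+2$ generators in $V$, only $w^{(1)},\dots,w^{(k-1)}$ can have both coordinates strictly positive, bounding the corner count by $k-1$. I anticipate no serious obstacle; the only care points will be the verification of down-closedness of $\Omega'$ and the bookkeeping in degenerate configurations (a $w^{(j)}$ lying on a coordinate axis, coinciding with another generator, or being collinear with its neighbors), all of which can only decrease the count.
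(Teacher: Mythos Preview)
Your proposal is correct and follows essentially the same approach as the paper: both construct $\Omega'$ as the convex hull of the origin, the two axis endpoints $(0,y_0)$, $(x_0,0)$, and one maximizer of $\langle v_j,\cdot\rangle$ for each $v_j\in V_k$ with $1\leq j\leq k-1$, then use $c_k(X_{\Omega'})=c_k(X_\Omega)$ together with $\vol(X_{\Omega'})\leq\vol(X_\Omega)$ and maximality to force $\Omega'=\Omega$. Your additional verifications (down-closedness of $\Omega'$, the termwise norm comparison) are sound and merely fill in details the paper leaves implicit.
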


\begin{proof}
For each $v\in V_k$ there exists $w_v\in\partial_*\Omega$ such that $\|v\|_\Omega=\langle v,w_v\rangle$. The point $w_v$ is not necessarily unique, but for $v=(0,k)$ we choose $w_v=(0,y_0)$, and for $v=(k,0)$ we choose $w_v=(x_0,0)$. We denote by $\Omega'\subset\Omega$ the convex hull of the finite set 
\[\big\{(0,0)\big\}\cup\big\{w_v\ \big|\ v\in V_k\big\}\subset\partial\Omega.\]
Notice that $X_{\Omega'}$ is a convex toric domain with $c_k(\Omega')= c_k(\Omega)$. Therefore, $\chat_k(\Omega')\geq\chat_k(\Omega)$, with equality if and only if $\Omega=\Omega'$. This shows that $\partial_*\Omega$ is piecewise linear with at most $k-1$ corners.
\end{proof}

Lemma~\ref{l:bound_corners}, together with Proposition~\ref{mp:ellipsoids}, implies the claim of Proposition~\ref{mp:toric}(ii) in the case $k=1$. Hence, from now on we consider $k\geq 2$. Moreover, in view of Lemma~\ref{l:bound_corners}, from now on we only need to consider convex domains $X_\Omega$ whose profile $\Omega$ has piecewise linear boundary.

\begin{Lemma}
\label{l:v_w}
Assume that $X_\Omega$ maximizes $\chat_k$ over the space of convex toric domains. For each corner $w$ of $\partial_*\Omega$, there exists $v\in V_k$ such that $c_k(X_\Omega)=\|v\|_\Omega=\langle v,w\rangle$. 
\end{Lemma}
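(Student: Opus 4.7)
The plan is to argue by contradiction. Suppose some corner $w_0$ of $\partial_*\Omega$ fails the conclusion, and set $c:=c_k(X_\Omega)$. Split $V_k$ into
\begin{align*}
V_k^*:=\{v\in V_k\mid \|v\|_\Omega=c\},\qquad V_k^\sharp:=V_k\setminus V_k^*.
\end{align*}
Under our contradiction hypothesis, for every $v\in V_k^*$ the corner $w_0$ fails to lie in the support face $F_v:=\{w\in\Omega\mid \langle v,w\rangle=\|v\|_\Omega\}$, so $\langle v,w_0\rangle<c$ strictly. Since $V_k^*$ is finite, we even get a uniform gap $\langle v,w_0\rangle\leq c-\delta$ for some $\delta>0$ and all $v\in V_k^*$.

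The second step is to locally shrink $\Omega$ only near $w_0$. By Lemma~\ref{l:bound_corners} the boundary $\partial_*\Omega$ is piecewise linear and $w_0$ is an interior vertex of it, meeting two edges with slopes $m_-<m_+\leq 0$. For $\varepsilon>0$ pick the points $w_0^\pm$ at distance $\varepsilon$ from $w_0$ along these two edges, and let $\Omega_\varepsilon$ be obtained by replacing the corner $w_0$ with the straight segment from $w_0^-$ to $w_0^+$. Since the slope of this new segment lies in $(m_-,m_+)$, the resulting $\Omega_\varepsilon$ is still the subgraph of a concave, non-increasing function, so $X_{\Omega_\varepsilon}$ is a convex toric domain with $X_{\Omega_\varepsilon}\subsetneq X_\Omega$.

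Next, I would check that for $\varepsilon>0$ small enough the new capacity is unchanged. For $v\in V_k^*$, the uniform gap $\delta$ guarantees that $F_v$ lies at positive distance from $w_0$, hence $F_v\subset\Omega_\varepsilon$ and $\|v\|_{\Omega_\varepsilon}=\|v\|_\Omega=c$. For $v\in V_k^\sharp$, Hausdorff continuity of $\Omega'\mapsto \|v\|_{\Omega'}$ together with the strict inequality $\|v\|_\Omega>c$ implies $\|v\|_{\Omega_\varepsilon}>c$ for all sufficiently small $\varepsilon$. Combining the two,
\begin{align*}
c_k(X_{\Omega_\varepsilon})=\min_{v\in V_k}\|v\|_{\Omega_\varepsilon}=c=c_k(X_\Omega).
\end{align*}
Since a genuine triangle of positive area has been removed, $\vol(X_{\Omega_\varepsilon})<\vol(X_\Omega)$, so $\chat_k(X_{\Omega_\varepsilon})>\chat_k(X_\Omega)$, contradicting maximality.

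The only genuinely delicate point is the preservation of the convex-toric-domain property after the cut: one must know both that $\Omega_\varepsilon$ is still convex (clear, since we replaced a vertex by a chord of $\Omega$) and that it remains the subgraph of a concave, non-increasing function so that $\widehat{\Omega_\varepsilon}$ is convex. The latter is precisely the slope condition $m_-<\text{slope of the cut}<m_+\leq 0$, which is automatic from the construction. Every other step, in particular the continuity argument for $v\in V_k^\sharp$ and the strict area loss, is straightforward.
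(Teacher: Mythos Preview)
Your argument is correct and follows exactly the same idea as the paper's proof, which is a one-sentence sketch: chamfer $\Omega$ near the offending corner to produce a strictly smaller profile with the same $k$-th capacity, contradicting maximality. You have simply filled in the details of that sketch, including the split into $V_k^*$ and $V_k^\sharp$ and the continuity argument, none of which the paper spells out.
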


\begin{proof}
If such a $v$ does not exist, by chamfering $\Omega$ near $w$ we would produce $\Omega'\subsetneq\Omega$ still defining a convex toric domain $X_{\Omega'}$ with the same capacity $c_k(X_{\Omega'})=c_k(X_{\Omega})$ but smaller volume $\vol(X_{\Omega'})<\vol(X_\Omega)$, in contradiction to the fact that $X_\Omega$ maximizes $\chat_k$.
\end{proof}

\begin{Lemma}
\label{l:corners}
Assume that $X_\Omega$ maximizes $\chat_k$ over the space of convex toric domains. Then $\partial_*\Omega$ is piecewise linear with at most two corners. Moreover:
\begin{itemize}

\item[$(i)$] If $\partial_*\Omega$ has no corners, then $X_\Omega$ is  an ellipsoid.

\item[$(ii)$] If $w=(x,x)\in\partial_0\Omega$ is a corner of $\partial_*\Omega$, then it is its only corner and $X_\Omega=P(x,x)$.

\item[$(iii)$] $\partial_+\Omega$ contains at most one corner $w=(x,y)$ of $\partial_*\Omega$, and if such a corner exists then $y=y_0$.

\item[$(iv)$] $\partial_-\Omega$ contains at most one corner $w=(x,y)$ of $\partial_*\Omega$, and if such a corner exists then $x=x_0$.
 
\end{itemize}
The possible situations allowed by the last two points are described by the following pictures:
\begin{center}
\scriptsize
\begingroup%
  \makeatletter%
  \providecommand\color[2][]{%
    \errmessage{(Inkscape) Color is used for the text in Inkscape, but the package 'color.sty' is not loaded}%
    \renewcommand\color[2][]{}%
  }%
  \providecommand\transparent[1]{%
    \errmessage{(Inkscape) Transparency is used (non-zero) for the text in Inkscape, but the package 'transparent.sty' is not loaded}%
    \renewcommand\transparent[1]{}%
  }%
  \providecommand\rotatebox[2]{#2}%
  \newcommand*\fsize{\dimexpr\f@size pt\relax}%
  \newcommand*\lineheight[1]{\fontsize{\fsize}{#1\fsize}\selectfont}%
  \ifx\svgwidth\undefined%
    \setlength{\unitlength}{380.69870068bp}%
    \ifx\svgscale\undefined%
      \relax%
    \else%
      \setlength{\unitlength}{\unitlength * \real{\svgscale}}%
    \fi%
  \else%
    \setlength{\unitlength}{\svgwidth}%
  \fi%
  \global\let\svgwidth\undefined%
  \global\let\svgscale\undefined%
  \makeatother%
  \begin{picture}(1,0.1938822)%
    \lineheight{1}%
    \setlength\tabcolsep{0pt}%
    \put(0,0){\includegraphics[width=\unitlength,page=1]{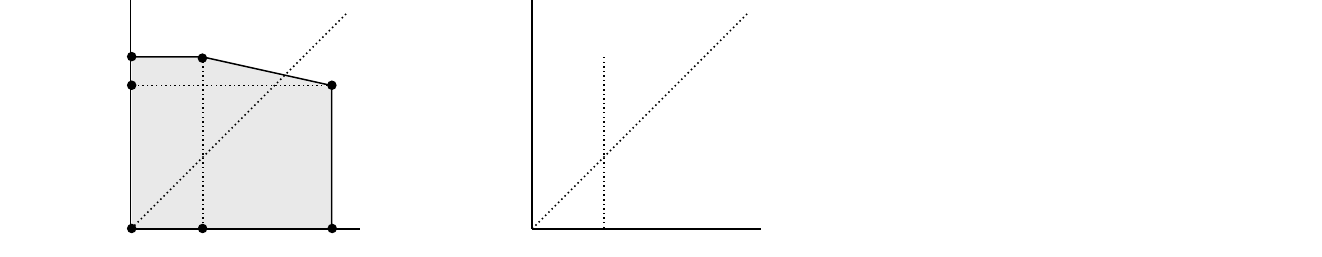}}%
    \put(0.45211597,0.1571409){\color[rgb]{0,0,0}\makebox(0,0)[lt]{\lineheight{1.25}\smash{\begin{tabular}[t]{l}$w$\end{tabular}}}}%
    \put(0,0){\includegraphics[width=\unitlength,page=2]{lemma.pdf}}%
    \put(0.24407755,0.00505208){\color[rgb]{0,0,0}\makebox(0,0)[lt]{\lineheight{1.25}\smash{\begin{tabular}[t]{l}$x_0$\end{tabular}}}}%
    \put(0.07359104,0.14697583){\color[rgb]{0,0,0}\makebox(0,0)[lt]{\lineheight{1.25}\smash{\begin{tabular}[t]{l}$y_0$\end{tabular}}}}%
    \put(0.08690408,0.00407453){\color[rgb]{0,0,0}\makebox(0,0)[lt]{\lineheight{1.25}\smash{\begin{tabular}[t]{l}$0$\end{tabular}}}}%
    \put(0.17965244,0.06888043){\color[rgb]{0,0,0}\makebox(0,0)[lt]{\lineheight{1.25}\smash{\begin{tabular}[t]{l}\normalsize$\Omega$\end{tabular}}}}%
    \put(0.14872654,0.00505208){\color[rgb]{0,0,0}\makebox(0,0)[lt]{\lineheight{1.25}\smash{\begin{tabular}[t]{l}$x_1$\end{tabular}}}}%
    \put(0.07359104,0.12530513){\color[rgb]{0,0,0}\makebox(0,0)[lt]{\lineheight{1.25}\smash{\begin{tabular}[t]{l}$y_2$\end{tabular}}}}%
    \put(0.14872654,0.1571409){\color[rgb]{0,0,0}\makebox(0,0)[lt]{\lineheight{1.25}\smash{\begin{tabular}[t]{l}$w_1$\end{tabular}}}}%
    \put(0.24915776,0.13621699){\color[rgb]{0,0,0}\makebox(0,0)[lt]{\lineheight{1.25}\smash{\begin{tabular}[t]{l}$w_2$\end{tabular}}}}%
    \put(0.54746695,0.00505208){\color[rgb]{0,0,0}\makebox(0,0)[lt]{\lineheight{1.25}\smash{\begin{tabular}[t]{l}$x_0$\end{tabular}}}}%
    \put(0.37698058,0.14697583){\color[rgb]{0,0,0}\makebox(0,0)[lt]{\lineheight{1.25}\smash{\begin{tabular}[t]{l}$y_0$\end{tabular}}}}%
    \put(0.39028739,0.00407453){\color[rgb]{0,0,0}\makebox(0,0)[lt]{\lineheight{1.25}\smash{\begin{tabular}[t]{l}$0$\end{tabular}}}}%
    \put(0.43970052,0.06888043){\color[rgb]{0,0,0}\makebox(0,0)[lt]{\lineheight{1.25}\smash{\begin{tabular}[t]{l}\normalsize$\Omega$\end{tabular}}}}%
    \put(0.45211597,0.00505208){\color[rgb]{0,0,0}\makebox(0,0)[lt]{\lineheight{1.25}\smash{\begin{tabular}[t]{l}$x$\end{tabular}}}}%
    \put(0.82918598,0.00505208){\color[rgb]{0,0,0}\makebox(0,0)[lt]{\lineheight{1.25}\smash{\begin{tabular}[t]{l}$x_0$\end{tabular}}}}%
    \put(0.6586995,0.14697583){\color[rgb]{0,0,0}\makebox(0,0)[lt]{\lineheight{1.25}\smash{\begin{tabular}[t]{l}$y_0$\end{tabular}}}}%
    \put(0.67201058,0.00407453){\color[rgb]{0,0,0}\makebox(0,0)[lt]{\lineheight{1.25}\smash{\begin{tabular}[t]{l}$0$\end{tabular}}}}%
    \put(0.66657981,0.12530513){\color[rgb]{0,0,0}\makebox(0,0)[lt]{\lineheight{1.25}\smash{\begin{tabular}[t]{l}$y$\end{tabular}}}}%
    \put(0.83500889,0.13696157){\color[rgb]{0,0,0}\makebox(0,0)[lt]{\lineheight{1.25}\smash{\begin{tabular}[t]{l}$w$\end{tabular}}}}%
    \put(0.74308995,0.06888043){\color[rgb]{0,0,0}\makebox(0,0)[lt]{\lineheight{1.25}\smash{\begin{tabular}[t]{l}\normalsize$\Omega$\end{tabular}}}}%
    \put(-0.00159041,0.08386349){\color[rgb]{0,0,0}\makebox(0,0)[lt]{\lineheight{1.25}\smash{\begin{tabular}[t]{l}$ $\end{tabular}}}}%
  \end{picture}%
\endgroup%
 
\end{center}

\end{Lemma}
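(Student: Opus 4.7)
The plan is to combine Lemma~\ref{l:v_w}, which forces every corner of $\partial_*\Omega$ to lie on some line $\ell_j := \{jx+(k-j)y = c\}$ with $j \in J := \{j : \|v_j\|_\Omega = c\}$, with carefully chosen volume-reducing perturbations of $\Omega$ that preserve the capacity $c = c_k(X_\Omega)$.

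\emph{Parts (i) and (ii).} Part (i) is immediate: in the absence of corners, Lemma~\ref{l:bound_corners} makes $\partial_*\Omega$ a single segment from $(0,y_0)$ to $(x_0,0)$, so $\Omega$ is a triangle and $X_\Omega = E(x_0,y_0)$. For (ii), at a corner $w=(x,x) \in \partial_0\Omega$ one has $\langle v_j, w\rangle = kx$ for every $j$, and because the two-dimensional normal cone at a corner meets $V_k$, we get $c \leq kx$. Conversely the decreasing-set property yields $[0,x]^2 \subseteq \Omega$, whose direct computation gives $c_k(X_{[0,x]^2}) = kx$, so $c = kx$. Then $\chat_k(X_{[0,x]^2}) = k/\sqrt{2} \geq \chat_k(X_\Omega)$, and maximality forces $\Omega = [0,x]^2$; the only corner of $\partial_*[0,x]^2$ is $(x,x)$.

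\emph{Parts (iii)/(iv).} The reflection $(x,y) \mapsto (y,x)$ preserves the class of convex toric profiles and sends $v_j \to v_{k-j}$, so it suffices to prove (iii). The uniqueness statement will follow automatically from the height claim: if $w_a, w_b \in \partial_+\Omega$ were two corners both at height $y_0$, then with $w_0 = (0, y_0)$ they would be collinear on $\{y = y_0\}$, forcing both to lie on a common edge of $\Omega$ and contradicting them being corners. Thus the content of (iii) reduces to showing that any corner $w = (x^*, y^*) \in \partial_+\Omega$ must satisfy $y^* = y_0$.

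For this claim, suppose toward contradiction that $y^* < y_0$, and let $j^* \in J$ be given by Lemma~\ref{l:v_w}. Since $y^* > x^*$, the case $j^* = 0$ would force $c = ky^*$ while $\|v_0\|_\Omega = ky_0 \geq c$ implies $y_0 \leq y^*$, a contradiction; hence $j^* \geq 1$, which gives $c = j^*x^* + (k-j^*)y^* < ky^*$, so $y^* > c/k$ and therefore $0 \notin J$. I would then produce a convex decreasing profile $\Omega'$ with $c_k(X_{\Omega'}) = c$ but $\area(\Omega') < \area(\Omega)$. The natural candidate is to translate $w_0 = (0, y_0)$ downward to $(0, y_0 - \epsilon)$ while keeping all other vertices fixed; for small $\epsilon > 0$ this remains a valid convex profile (the slope of the new first edge still fits between those of its neighbors) and the area decreases by $\tfrac{1}{2}\epsilon \cdot x(w_1)$. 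The capacity is preserved provided that for every $j \in J$ the face $F_j = \Omega \cap \ell_j$ does not reduce to the single vertex $\{w_0\}$, because then the maximum of $\langle v_j, \cdot\rangle$ on $\Omega'$ is still attained away from $w_0$ with value $c$.

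The hard part will be the degenerate case where some $j_0 \in J$ has $F_{j_0} = \{w_0\}$, i.e.\ $v_{j_0}$ lies strictly inside the normal cone at $w_0$. In that situation the pure vertical translation decreases $\|v_{j_0}\|_{\Omega'}$ below $c$, and one must use a two-parameter perturbation: simultaneously move $w_0$ downward by $\epsilon$ and slide $w_1$ along the line $\ell_{j_0}$ by some $\delta(\epsilon)$ so that $\|v_{j_0}\|_{\Omega'} = c$ is preserved, then verify by a first-order expansion that the area decreases while no other constraint $\|v_j\|_{\Omega'} \geq c$ is violated. The geometric input that makes this work is that having a corner $w$ strictly in $\partial_+\Omega$ with $y^* < y_0$ and simultaneously $F_{j_0} = \{w_0\}$ over-determines the configuration: the normal cone conditions at $w_0$ and at $w$, together with the activity of the lines $\ell_{j^*}$ and $\ell_{j_0}$, leave a genuine direction of perturbation that lowers the area at first order while preserving all active constraints, contradicting maximality and completing the proof.
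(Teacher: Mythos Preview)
Your treatment of (i), (ii), and the reflection reducing (iv) to (iii) is essentially the paper's. The gap is in (iii), where your strategy---translating $w_0=(0,y_0)$ downward---diverges from the paper's and is left incomplete in two places.

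First, your ``hard case'' (some $v_{j_0}$ with $j_0\in J$ uniquely maximized at $w_0$) is only sketched: you promise a two-parameter perturbation but give no computation, closing with a heuristic about the configuration being ``over-determined.'' Second, even in the easy case your perturbation can leave the class of admissible profiles: if the first corner $w_1$ of $\partial_*\Omega$ already sits at height $y_0$ (so the first edge of $\partial_*\Omega$ is horizontal), then lowering $w_0$ gives the new first edge positive slope, and $\Omega'$ is no longer the subgraph of a non-increasing function. Your parenthetical ``the slope of the new first edge still fits between those of its neighbors'' is simply false in that situation, and you would still need to manufacture a contradiction there (noting that such a horizontal top edge is not a priori excluded when a later corner in $\partial_+\Omega$ has height $<y_0$).

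The paper bypasses both issues by perturbing at $w$ rather than at $w_0$. Given a corner $w=(x,y)\in\partial_+\Omega$ with $c_k(X_\Omega)=\langle v_j,w\rangle$, one computes $\langle v_h,w\rangle=c_k(X_\Omega)+(h-j)(x-y)$, so $\langle v_h,w\rangle>c_k(X_\Omega)$ for $h<j$ and $\langle v_h,w\rangle<c_k(X_\Omega)$ for $h>j$. Hence truncating to $\Omega':=\Omega\cap(\mathds R\times[0,y])$ preserves $c_k$ (for $h\leq j$ the point $w\in\Omega'$ already realizes value $\geq c_k(X_\Omega)$; for $h>j$ the normal-cone ordering forces the maximizer of $v_h$ to lie at or after $w$, hence at height $\leq y$), and maximality gives $\Omega'=\Omega$, i.e.\ $y=y_0$. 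This single truncation handles all configurations uniformly and needs no case analysis or first-order perturbation.
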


\begin{proof}
Assume that $w=(x,x)\in\partial_0\Omega$ is a corner of $\partial_*\Omega$. Then, by Lemma~\ref{l:v_w}, there exists $v\in V_k$ such that $c_k(X_\Omega)=\|v\|_{\Omega}=\langle v,w\rangle$. The square $\Omega':=[0,x]^2$ is contained in $\Omega$, and satisfies $c_k(X_{\Omega'})=\|v\|_\Omega=c_k(X_\Omega)$. Since $X_\Omega$ maximizes $\chat_k$, we infer $X_\Omega=X_{\Omega'}=P(x,x)$.

Assume instead that $\partial_+\Omega$ contains a corner $w=(x,y)$. We claim that $y=y_0$ (and, in particular, that there are no other corners $w'=(x',y')\in\partial_+\Omega$ with $x'<x$). Indeed, once again by Lemma~\ref{l:v_w}, there exists $v_j=(j,k-j)\in V_k$ such that $c_k(X_\Omega)=\|v_j\|_{\Omega}=\langle v_j,w\rangle$. Since $x<y$, we have
\begin{align*}
&\langle v_h,w \rangle >c_k(X_\Omega),\quad\mbox{if }\ 0\leq h<j,\\
&\langle v_h,w \rangle < c_k(X_\Omega),\quad\mbox{if }\ j<h\leq k.
\end{align*}
This implies that the convex domain $\Omega':=(\R\times[0,y])\cap\Omega$, which is contained in $\Omega$, has the same capacity $c_k(X_{\Omega'})=c_k(X_{\Omega})$. Therefore, since $X_\Omega$ is a maximizer of $\chat_k$, we infer $\Omega'=\Omega$, and so $y=y_0$.

An analogous argument shows that, if $\partial_-\Omega$ contains a corner $w=(x,y)$, then $x=x_0$ (and then, in particular, there are no other corners $w'=(x',y')\in\partial_+\Omega$ with $x'>x$).
\end{proof}

By Lemma \ref{l:corners} it suffices to consider regions of the form
\[
\Omega=\Omega(\slp,s,t):=\{0 \leq y\leq 1+s \} \cap \{0 \leq x \leq 1+t \} \cap \{ 1- \slp (x-1) \leq y \}
\]
for $\slp \in [0,\infty]$, $s\in [0,\slp]$ and $t\in [0,1/\slp]$. The corners of $\partial_*\Omega$ are
\[
v^+:=(1-\tfrac{s}{\slp}, 1+s),\qquad v^-:=(1+t,1-\slp t).
\]
It will be convenient to parametrize $\slp$ as
\[
		\slp=\slp(i,r)=\frac{i+r}{k-(i+r)}
\]
where $(i,r) \in  \{0,1,\ldots,k-1\} \times [0,1) \cup \{(k,0)\}$. As above we set $v_j = (j,k-j)$, $j=1,\ldots,k$.

\begin{Lemma}\label{lem:maximizer} We have
\begin{align*}
\left\|v_j\right\|_{\Omega} & = \langle v_j,v^+  \rangle,
\qquad
\forall j\leq i,\\
\left\|v_{j}\right\|_{\Omega} & = \langle v_{j},v^-  \rangle,
\qquad
\forall j\geq i+1.
\end{align*}
\end{Lemma}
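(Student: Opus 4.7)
The plan is to reduce the computation of $\|v_j\|_{\Omega}$ to a comparison between the values $\langle v_j, v^+\rangle$ and $\langle v_j, v^-\rangle$, and then identify the sign of the difference explicitly in terms of $i, r, j$.

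First I would list the five vertices of the convex polygon $\Omega$:  the origin $(0,0)$, the axis points $(1+t, 0)$ and $(0, 1+s)$, and the two upper corners $v^+=(1-s/\alpha, 1+s)$ and $v^-=(1+t, 1-\alpha t)$. Since $\Omega$ is a convex polygon and $v_j\in [0,\infty)^2$, the maximum $\|v_j\|_\Omega = \max_{w\in\Omega}\langle v_j,w\rangle$ is attained at one of these vertices. Next I would eliminate the three ``lower'' candidates. Because $j\ge 1$ and $s\le\alpha$, we have $1-s/\alpha\ge 0$, hence
\[
\langle v_j, (0,1+s)\rangle = (k-j)(1+s) \le j(1-s/\alpha) + (k-j)(1+s) = \langle v_j, v^+\rangle.
\]
Because $k-j\ge 0$ and $1-\alpha t\ge 0$ (as $t\le 1/\alpha$),
\[
\langle v_j, (1+t,0)\rangle = j(1+t) \le j(1+t) + (k-j)(1-\alpha t) = \langle v_j, v^-\rangle.
\]
The origin gives $0$, which is also dominated. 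Thus $\|v_j\|_\Omega = \max\{\langle v_j, v^+\rangle, \langle v_j, v^-\rangle\}$.

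The remaining step is a direct computation. One finds
\begin{align*}
\langle v_j, v^+\rangle - \langle v_j, v^-\rangle
&= -\tfrac{js}{\alpha} + (k-j)s - jt + (k-j)\alpha t\\
&= \bigl[(k-j)\alpha - j\bigr]\!\left(\tfrac{s}{\alpha} + t\right).
\end{align*}
Since $s/\alpha + t \ge 0$, the sign of the difference is controlled by $(k-j)\alpha - j$. Substituting $\alpha = (i+r)/(k-i-r)$ and using that $0<k-i-r$ and $0\le k-j$, the inequality $(k-j)\alpha \ge j$ is equivalent to $(k-j)(i+r)\ge j(k-i-r)$, which simplifies to $(i+r)k \ge jk$, i.e.\ $j \le i+r$. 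Because $j\in\Z$ and $r\in[0,1)$, this is exactly the condition $j\le i$.

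Consequently $\langle v_j, v^+\rangle \ge \langle v_j, v^-\rangle$ for $j\le i$ and the reverse inequality holds for $j\ge i+1$, which gives the two formulas claimed in the lemma. The only mild obstacle is keeping track of the signs and the special parametrization of $\alpha$, but once the inner-product difference is factored as above, the conclusion is immediate. No separate argument is needed for the boundary case $(i,r)=(k,0)$ (where $\alpha=\infty$ and only $v^+$ is relevant) or $r=0, i<k$ (where equality holds at $j=i$ and either choice gives the same value).
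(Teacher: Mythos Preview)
Your proof is correct and essentially equivalent to the paper's, just phrased computationally rather than geometrically. The paper observes that whether the maximum is attained at $v^+$ or $v^-$ depends on which side of the line through the origin perpendicular to the segment $v^+v^-$ the point $v_j$ lies, and since that perpendicular has slope $1/\alpha$ the conclusion follows; your factorization $\langle v_j,v^+\rangle-\langle v_j,v^-\rangle=[(k-j)\alpha-j](s/\alpha+t)$ is exactly the algebraic content of that geometric statement. One tiny slip: you write ``because $j\ge 1$'' when eliminating the vertex $(0,1+s)$, but $j$ ranges over $\{0,\dots,k\}$; the inequality still holds for $j=0$ (with equality), so nothing is lost.
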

\begin{proof}
The maximum $\left\|v_j\right\|_{\Omega} = \max_{w\in \Omega}$ is attained at $v^+$ or $v^-$. Whether it is attained at $v^+$ or $v^-$ if $v^+\neq v^-$ depends on which side of the line $l$ through the origin perpendicular to the line through $v^+$ and $v^-$ the point $v_j$ lies. Since the slope of $l$ is $1/\slp$, the claim follows.
\end{proof}

\begin{Lemma} \label{lem:min_maximizer} We have
\begin{align*}
 \left\|v_j\right\|_{\Omega} & \geq \left\|v_i\right\|_{\Omega} = k \left( 1+ \frac{sr}{i+r}  \right), && \forall j\leq i,\\
 \left\|v_j\right\|_{\Omega} & \geq \left\|v_{i+1}\right\|_{\Omega} = k \left( 1+  \frac{t(1-r)}{k-(i+r)}  \right), && \forall j\geq i+1.
\end{align*}
\end{Lemma}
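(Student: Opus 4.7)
The plan is to compute the two quantities on the right-hand sides directly, and then observe that $j \mapsto \langle v_j, v^+\rangle$ and $j \mapsto \langle v_j, v^-\rangle$ are monotonic linear functions of the integer variable $j$, so the minima over the two ranges are attained at the claimed endpoints.

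First I would compute $\|v_i\|_\Omega$. By Lemma~\ref{lem:maximizer}, $\|v_i\|_\Omega = \langle v_i, v^+\rangle = i(1 - s/\slp) + (k-i)(1+s)$. Writing $\slp = (i+r)/(k-(i+r))$ and expanding, the terms of order $0$ in $s$ sum to $k$, while the coefficient of $s$ equals $(k-i) - i/\slp = (k-i) - i(k-(i+r))/(i+r) = kr/(i+r)$, yielding $\|v_i\|_\Omega = k(1 + sr/(i+r))$. An analogous computation for $\|v_{i+1}\|_\Omega = \langle v_{i+1}, v^-\rangle$ gives $k + t((i+1) - (k-i-1)\slp)$, and inserting $\slp = (i+r)/(k-(i+r))$ collapses the bracket to $k(1-r)/(k-(i+r))$, hence $\|v_{i+1}\|_\Omega = k(1 + t(1-r)/(k-(i+r)))$, as claimed.

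Next I would establish the two inequalities. For $j \leq i$, Lemma~\ref{lem:maximizer} gives $\|v_j\|_\Omega = \langle v_j, v^+\rangle = k + s\bigl((k-j) - j/\slp\bigr)$, which is a linear function of $j$ with slope $-s(1+1/\slp) \leq 0$; therefore it is non-increasing in $j$, and its minimum over $j \in \{1,\ldots,i\}$ is attained at $j=i$. Similarly, for $j \geq i+1$, we have $\|v_j\|_\Omega = \langle v_j, v^-\rangle = k + t\bigl(j - (k-j)\slp\bigr)$, which is linear in $j$ with slope $t(1+\slp) \geq 0$, hence non-decreasing in $j$, and its minimum over $j \in \{i+1,\ldots,k\}$ is attained at $j = i+1$.

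The only mildly delicate point is handling the degenerate boundary values of the parameters ($\slp \in \{0,\infty\}$, $s = 0$, $t = 0$, or $i+r \in \{0,k\}$), where some of the expressions above must be interpreted as limits; in each such case either $\Omega$ degenerates to a rectangle or the corresponding range of $j$ is empty, and both inequalities are trivially satisfied. No substantial obstacle is expected: the argument is an elementary manipulation of the explicit formulas provided by Lemma~\ref{lem:maximizer}, together with the monotonicity of the two affine functions of $j$.
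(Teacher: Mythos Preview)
Your proof is correct and essentially identical to the paper's: the equalities are obtained by the same direct computation via Lemma~\ref{lem:maximizer}, and your monotonicity argument for the inequalities is just a rephrasing of the paper's observation that the $x$-coordinate of $v^+$ (resp.\ $v^-$) is not larger (resp.\ not smaller) than its $y$-coordinate while the coordinates of $v_j$ sum to $k$. The degenerate cases you worry about in the last paragraph are excluded in the paper's setup (rectangles are handled separately and $0<i+r<k$ is assumed), so that paragraph is unnecessary.
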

\begin{proof} The inequalities follow from the facts that the coordinates of $v_j$ sum up to $k$ and that the $x$-coordinate of $v^+$ ($v^-$) is not larger (smaller) than its $y$-coordinate.
For the equalities we compute using Lemma \ref{lem:maximizer}
\begin{align*} 
\left\|v_i\right\|_{\Omega} &= \left\langle v_i,v^+  \right\rangle =i-\frac s \slp i +k - i+ks -is =k \left( 1+ s\left(1- i  \frac{\slp+1}{k\slp}  \right) \right) \\
					&= k \left( 1+ s\left(1- i  \frac{1}{i+r}  \right) \right) = k \left( 1+ \frac{sr}{i+r}  \right) \\
\end{align*}
and
\begin{align*} 
\left\|v_{i+1}\right\|_{\Omega} &= \left\langle v_{i+1},v^+  \right\rangle = i+it+1+t+k-i-1-\slp tk+\slp it+\slp t \\
					&= k + t \left( 1+i-\slp(k-i-1) \right)  \\
					& = k+t  \frac{(1+i)\left(k-(i+r)\right)-\left(k-(i+1)\right)(i+r)}{k-(i+r)} \\
					& = k \left( 1+  \frac{t(1-r)}{k-(i+r)}  \right).
\qedhere
\end{align*}
\end{proof}
Using elementary geometry we compute the Euclidean area of $\Omega$ to be
\[
\area(\Omega) = 1+t+s-\frac{s^2}{2\alpha} - \frac{t^2\alpha}{2}.
\]
Hence, with $\vol(X_\Omega)= 2\, \area(\Omega)$ and Lemma \ref{lem:min_maximizer} we obtain
\begin{equation}\label{eqn:cond-2}
2\,\chat_k(X_\Omega)^2
=
\frac {c_k(X_\Omega)^2}{\area(\Omega)} 
= 
\frac{k^2 \left(1+ \min \left\{ \frac{sr}{i+r}, \frac{t(1-r)}{k-(i+r)} \right\}  \right)^2}{1+t+s-\frac{s^2}{2\alpha} - \frac{t^2\alpha}{2} }
\end{equation}
We claim that $2\,\chat_k(X_\Omega)^2 \leq k^2$ with equality if and only if $s=t=0$ (in which case $\Omega$ is independent of $\slp$), or $k=2$ and $X_\Omega=E(2,1)$. Showing this claim will complete the proof of Proposition \ref{mp:toric}(ii). The claim is equivalent to 
\begin{equation}\label{eqn:cond-1}
			\left(1+ \min \left\{ \frac{sr}{i+r}, \frac{t(1-r)}{k-(i+r)} \right\}  \right)^2\leq 1+t+s-\frac{s^2}{2\slp} - \frac{t^2\slp}{2} 
\end{equation}
with equality if and only if $s=t=0$, or $k=2$ and $X_\Omega=E(2,1)$. 

Since we have already treated the case of rectangles in Example \ref{ex:polydisk}, we assume that $0<i+r<k$, i.e.\ $\slp \in (0,\infty)$. In particular, we have $r<1$. Moreover, we can assume that $\frac{sr}{i+r}= \frac{t(1-r)}{k-(i+r)}$, i.e. 
\begin{equation}\label{eqn:cond0}
			t= \frac{r}{1-r} \frac{s}{\slp};
\end{equation}
indeed, otherwise we could decrease the volume $\vol(X_\Omega)$ by decreasing $t$ or $s$ without changing $c_k(X_\Omega)$. It suffices to show that we have a strict inequality in (\ref{eqn:cond-1}) in the case $s \neq 0$ unless $k=2$ and $X_\Omega=E(2,1)$. For $s\neq 0$ a strict inequality in (\ref{eqn:cond-1}) is (using (\ref{eqn:cond0})) equivalent to
\begin{align} \label{eqn:cond1}
					& 2s \frac{r}{i+r}+s^2\frac{r^2}{(i+r)^2} < s\left( 1+\frac{r}{(1-r)\slp} \right) - \frac{s^2}{2\slp} \left( 1+ \frac{r^2}{(1-r)^2}\right) \nonumber \\
					 \Leftrightarrow \, \, &\frac{s}{\slp} \left( \frac{r^2\slp}{(i+r)^2}+ \frac{(1-r)^2+r^2}{2(1-r)^2} \right) < 1+ \frac{r}{(1-r)\slp} - \frac{2r}{i+r}.
\end{align}
Note that $\frac{s}{\slp} \leq \min \left\{  1,\frac{1-r}{r\slp}\right\}$. Therefore, the sharp inequality (\ref{eqn:cond1}) is implied both by
\begin{equation}\label{eqn:cond2}
		 \frac{r^2\slp}{(i+r)^2}+ \frac{(1-r)^2+r^2}{2(1-r)^2}  < 1+ \frac{r}{(1-r)\slp} - \frac{2r}{i+r}
\end{equation}
and by
\begin{equation}\label{eqn:cond2b}
		 \frac{(1-r)r}{(i+r)^2}+ \frac{(1-r)^2+r^2}{2(1-r)r\slp}  < 1+ \frac{r}{(1-r)\slp} - \frac{2r}{i+r}.
\end{equation}
Let us first look at inequality (\ref{eqn:cond2}). 
\begin{Lemma}\label{lem:first_ineq} For $0<i+r<k$ inequality $(\ref{eqn:cond2})$ is equivalent to
\begin{equation}\label{eqn:cond3}
		 0<\left( (k-i)(k-i-2)+r  \right) r +(i+r)(k-i-r) \frac{1-2r}{2(1-r)}.
\end{equation}
\end{Lemma}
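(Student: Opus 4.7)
The plan is a direct algebraic manipulation: I would substitute $\alpha = (i+r)/(k-(i+r))$ into (\ref{eqn:cond2}), clear denominators, and match the resulting polynomial inequality with a rescaled version of (\ref{eqn:cond3}). Setting $A := i+r$ and $B := k-(i+r)$, the hypothesis $0 < i+r < k$ guarantees $A, B > 0$ and $\alpha = A/B$, so every multiplication carried out below is by a strictly positive quantity and preserves the sense of the inequality.

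First I would use the substitutions $\tfrac{r^2 \alpha}{(i+r)^2} = \tfrac{r^2}{AB}$, $\tfrac{r}{(1-r)\alpha} = \tfrac{rB}{(1-r)A}$, and $\tfrac{2r}{i+r} = \tfrac{2r}{A}$, together with the identity $1 - \tfrac{(1-r)^2 + r^2}{2(1-r)^2} = \tfrac{1-2r}{2(1-r)^2}$, to rewrite (\ref{eqn:cond2}) in the equivalent form
$$0 < \tfrac{1-2r}{2(1-r)^2} + \tfrac{rB}{(1-r)A} - \tfrac{2r}{A} - \tfrac{r^2}{AB}.$$
Multiplying through by the positive factor $2(1-r)^2 AB$ would then yield
$$0 < (1-2r)AB + 2r(1-r)B^2 - 4r(1-r)^2 B - 2r^2(1-r)^2. \qquad (\star)$$

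Next I would rewrite (\ref{eqn:cond3}) using $k-i = B+r$ and $(i+r)(k-i-r) = AB$. Multiplying the resulting expression by $2(1-r) > 0$ produces
$$0 < 2r(1-r)\bigl((B+r)(B+r-2) + r\bigr) + (1-2r)AB,$$
and expanding $(B+r)(B+r-2) + r = B^2 + 2rB + r^2 - 2B - r$ and distributing $2r(1-r)$ reproduces exactly the three $r$-dependent terms appearing in $(\star)$. Hence (\ref{eqn:cond2}) and (\ref{eqn:cond3}) are equivalent. The calculation is purely algebraic and presents no real obstacle; the only point worth noting is the simplification $1 - \tfrac{(1-r)^2 + r^2}{2(1-r)^2} = \tfrac{1-2r}{2(1-r)^2}$, which is precisely what forces the coefficient of $AB$ on both sides to reduce to $(1-2r)$ and makes the two polynomial expressions coincide.
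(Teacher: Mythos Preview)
Your proof is correct and follows essentially the same direct algebraic route as the paper: substitute $\alpha=(i+r)/(k-(i+r))$, clear denominators (using positivity of the factors), and compare the resulting polynomial with the one obtained by clearing denominators in (\ref{eqn:cond3}). Your use of the abbreviations $A=i+r$, $B=k-i-r$ makes the bookkeeping cleaner than the paper's computation, but the argument is the same.
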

\begin{proof} Indeed, we have
\begin{align*}
				(\ref{eqn:cond2})\Leftrightarrow \, \,	& \frac{r^2}{(i+r)(k-i-r)}+ \frac{2r}{i+r}  <  \frac{r(k-i-r)}{(1-r)(i+r)}  +1- \frac{(1-r)^2+r^2}{2(1-r)^2}  \\
					 \Leftrightarrow \, \, &\left(r^2+2r(k-i-r)\right) (1-r) < r(k-i-r)^2+(i+r)(k-i-k) \frac{1-2r}{2(r-1)^2} \\
					\Leftrightarrow \, \, &0 < r(k^2+i^2-2ik - 2k+2i+r)+(i+r)(k-i-r) \frac{1-2r}{2(r-1)}  \\
						\Leftrightarrow \, \, & (\ref{eqn:cond3}).
\qedhere
\end{align*}
\end{proof}
Let us provide sufficient conditions for inequality (\ref{eqn:cond3}), and hence for (\ref{eqn:cond1}), to hold.
\begin{Lemma}\label{lem:case1}
For $0<i+r<k$ inequality $(\ref{eqn:cond3})$ holds if
\begin{itemize}\setlength\itemsep{2pt}
\item[$(i)$] $i \leq k-2$ and $r \in [0,\frac 1 2]$,
\item[$(ii)$] $i = k-1$ and $r \in [0,u_{k-1}:=\frac{k-1}{2k-1})$,
\item[$(iii)$] $i = 0$ and $r \in [0,u_{0}:=\frac{2k^2-3k}{2k^2-2k-1})$.
\end{itemize}
\end{Lemma}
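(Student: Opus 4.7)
My proof would proceed case by case, exploiting the structure of inequality~(\ref{eqn:cond3}), namely
\[
0<P(i,r):=\bigl((k-i)(k-i-2)+r\bigr)r+(i+r)(k-i-r)\frac{1-2r}{2(1-r)},
\]
where $k\geq 2$ and $0<i+r<k$.

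For case~(i), the idea is to observe that both summands of $P(i,r)$ are manifestly non-negative and to show that they cannot vanish simultaneously. Indeed, when $i\leq k-2$ one has $(k-i)(k-i-2)\geq 0$, so the first summand is non-negative; and when $r\in[0,\tfrac12]$ the factor $1-2r$ is non-negative, so the second summand is non-negative (here $0<i+r<k$ guarantees that both $i+r$ and $k-i-r$ are strictly positive). To rule out both vanishing I would distinguish $r<\tfrac12$ from $r=\tfrac12$: if $r<\tfrac12$ and $i\geq 1$ the second summand is strictly positive, if $r<\tfrac12$ and $i=0$ then $r>0$ and the first summand is strictly positive (since $k(k-2)+r>0$ for $k\geq 2$), and at $r=\tfrac12$ the first summand is at least $\tfrac12\bigl((k-i)(k-i-2)+\tfrac12\bigr)\geq \tfrac14>0$.

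For cases~(ii) and~(iii), the cleanest approach is to substitute the given value of $i$ and simplify $P(i,r)$ to a linear expression in $r$, then read off the threshold. In case~(ii), setting $i=k-1$ gives $(k-i)(k-i-2)=-1$, and a direct computation (after multiplying by $2(1-r)>0$) yields
\[
2(1-r)\,P(k-1,r)=(1-r)\bigl(k-1+(1-2k)r\bigr),
\]
so~(\ref{eqn:cond3}) is equivalent to $k-1+(1-2k)r>0$, i.e.\ $r<\tfrac{k-1}{2k-1}=u_{k-1}$. In case~(iii), setting $i=0$, factoring out the positive factor $r$ and multiplying by $2(1-r)>0$, one obtains after expansion
\[
2(1-r)\,\tfrac{1}{r}P(0,r)=2k^2-3k-(2k^2-2k-1)\,r,
\]
so~(\ref{eqn:cond3}) is equivalent to $r<\tfrac{2k^2-3k}{2k^2-2k-1}=u_0$ (note $2k^2-2k-1>0$ for $k\geq 2$).

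The only non-routine point is case~(i) at the boundary $r=\tfrac12$, where both summands could a priori be small, but the bound $(k-i)(k-i-2)\geq 0$ makes this immediate. The other two cases reduce to the computations sketched above, which are elementary polynomial manipulations; the expressions for $u_{k-1}$ and $u_0$ in the statement are precisely the roots of the resulting linear inequalities, so no further work is needed beyond the algebraic simplification.
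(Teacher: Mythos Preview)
Your proof is correct and follows essentially the same approach as the paper: case~(i) is dispatched by noting both summands are non-negative and cannot vanish simultaneously (the paper simply calls this ``immediate''), while cases~(ii) and~(iii) are handled by substituting $i=k-1$ resp.\ $i=0$ and reducing the inequality to a linear condition in~$r$ whose threshold is exactly $u_{k-1}$ resp.\ $u_0$. Your case~(i) analysis is in fact more explicit than the paper's, carefully distinguishing the boundary $r=\tfrac12$ and the subcase $i=0$.
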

\begin{proof} Statement (i) is immediate from Lemma \ref{lem:first_ineq}. For $i=k-1$, inequality (\ref{eqn:cond3}) is equivalent to
\begin{align*}
				& 0<\left( -1+r  \right) r +(k-1+r)(1-r) \frac{1-2r}{2(1-r)}  \\
					 \Leftrightarrow \, \, & 0< 2r^2-2r+k-1+r-2rk+2r-2r^2 \\
										 \Leftrightarrow \, \, & r < u_{k-1}=\frac{k-1}{2k-1},
\end{align*}
which shows (ii). For $i=0$ we have $r>0$ and inequality (\ref{eqn:cond3}) is equivalent to
\begin{align*}
				& 0<\left( k(k-2)+r  \right) r +r(k-r) \frac{1-2r}{2(1-r)}  \\
					 \Leftrightarrow \, \, & (2r-1)(k-r)<2 (1-r)(k(k-2)+r)\\
										 \Leftrightarrow \, \, & 2rk-2r^2-k+r< 2k^2-4k+2r-2rk^2+4rk-2r^2\\
										\Leftrightarrow \, \, & r< u_0 =\frac{2k^2-3k}{2k^2-2k-1},
\end{align*}
which shows (iii) and completes the proof of the lemma.
\end{proof}

Let us now look at inequality (\ref{eqn:cond2b}).

\begin{Lemma}\label{lem:second_ineq} For $0<i+r<k$ inequality $(\ref{eqn:cond2b})$ is equivalent to
\begin{equation}\label{eqn:cond4}
		 r < i^2 + \frac{(i+r)^2(2r-1)}{2(1-r)r\slp}.
\end{equation}
\end{Lemma}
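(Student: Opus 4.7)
The plan is purely algebraic: I would separate the terms of $(\ref{eqn:cond2b})$ according to whether they carry a factor of $1/\slp$, simplify each side independently, and rearrange to arrive at $(\ref{eqn:cond4})$. There is no substantive obstacle; the hypothesis $0<i+r<k$ guarantees $\slp\in(0,\infty)$, $r<1$, and (implicitly, from the form of $(\ref{eqn:cond2b})$) $r>0$, so every manipulation is reversible.

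Concretely, I would first move $r/((1-r)\slp)$ to the left and $(1-r)r/(i+r)^2$ to the right, rewriting $(\ref{eqn:cond2b})$ as
\[
\frac{(1-r)^2+r^2}{2(1-r)r\slp} - \frac{r}{(1-r)\slp} < 1 - \frac{2r}{i+r} - \frac{(1-r)r}{(i+r)^2}.
\]
Combining fractions on the left over the common denominator $2(1-r)r\slp$ gives numerator $(1-r)^2+r^2-2r^2=(1-r)^2-r^2=1-2r$, so the left-hand side collapses to $(1-2r)/(2(1-r)r\slp)$. On the right, bringing everything over the common denominator $(i+r)^2$ and using $(i+r)^2-2r(i+r)=(i+r)(i-r)=i^2-r^2$, the numerator becomes $i^2-r^2-(1-r)r=i^2-r$, so the right-hand side equals $(i^2-r)/(i+r)^2$.

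Thus $(\ref{eqn:cond2b})$ is equivalent to
\[
\frac{1-2r}{2(1-r)r\slp} < \frac{i^2-r}{(i+r)^2}.
\]
Multiplying through by the positive quantity $(i+r)^2$, noting $1-2r=-(2r-1)$, and adding $r$ to both sides yields exactly $(\ref{eqn:cond4})$. The whole derivation is a chain of equalities plus a single multiplication by a positive number, so the stated equivalence follows with no further work.
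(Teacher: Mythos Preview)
Your proposal is correct and follows essentially the same approach as the paper: both arguments group the $1/\slp$ terms together and the $1/(i+r)^2$ terms together, simplify, and then clear the positive factor $(i+r)^2$; the paper merely places these groupings on the opposite sides of the inequality and multiplies through in a slightly different order, but the algebra is identical.
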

\begin{proof}
Indeed, we have
\begin{align*}
				(\ref{eqn:cond2b})\Leftrightarrow \, \,	&  \frac{r}{(i+r)^2} \left( (1-r)+2(i+r) \right) < 1 + \frac{1}{2(1-r)r\slp} \left(2r^2-(1-r)^2-r^2\right) \\
					 \Leftrightarrow \, \, & r-r^2+2ir+2r^2<i^2+2ir+r^2+ \frac{(i+r)^2(2r-1)}{2(1-r)r\slp} \\
					 \Leftrightarrow \, \, & (\ref{eqn:cond4}).
\qedhere
\end{align*}
\end{proof}
Now we provide sufficient conditions for inequality (\ref{eqn:cond3}) and hence further conditions for (\ref{eqn:cond1}) to hold.
\begin{Lemma}\label{lem:case2}
For $0<i+r<k$, inequality $(\ref{eqn:cond4})$ holds if
\begin{itemize}\setlength\itemsep{2pt}
\item[$(i)$] $i >0 $ and $r \in [\frac 1 2,1)$,
\item[$(ii)$] $i = 0$ and $r \in (l_{0}:=\frac{k}{2k-1},1)$,
\item[$(iii)$] $i = k-1$ and $r \in (l_{k-1}:=\frac{k-1}{2k^2-2k-1},1)$.
\end{itemize}
\end{Lemma}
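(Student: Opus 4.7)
The plan is to substitute $\slp=\slp(i,r)=(i+r)/(k-i-r)$ into inequality~(\ref{eqn:cond4}) to eliminate $\slp$, yielding the equivalent inequality
\begin{align*}
 r < i^{2} + \frac{(i+r)(2r-1)(k-i-r)}{2(1-r)r}.
\end{align*}
All three cases then follow from direct algebraic manipulations on this reformulation. The factor $k-i-r>0$ because $0<i+r<k$, and $1-r>0$ because $r<1$.

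For case $(i)$, when $r\in[\tfrac12,1)$ the factor $(2r-1)$ is non-negative, so the second summand on the right-hand side is non-negative. Since $i\geq1$, one has $i^{2}\geq 1>r$, and the inequality holds trivially. For case $(ii)$, setting $i=0$ simplifies the right-hand side to $(2r-1)(k-r)/(2(1-r))$; after clearing denominators (both sides positive because $r>\tfrac12$ is forced by $r>l_{0}>\tfrac12$), the inequality reduces to $2r(1-r)<(2r-1)(k-r)$, which expands and rearranges to $r(2k-1)>k$, i.e.\ $r>l_{0}=k/(2k-1)$. For case $(iii)$, setting $i=k-1$ gives $k-i-r=1-r$, which cancels against the $1-r$ in the denominator, leaving
\begin{align*}
 r < (k-1)^{2} + \frac{(k-1+r)(2r-1)}{2r}.
\end{align*}
Multiplying by $2r>0$ and expanding yields $0 < 2rk(k-1) - (k-1) - r$, equivalently $r(2k^{2}-2k-1)>k-1$, which is exactly $r>l_{k-1}$.

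I expect no real obstacle: the only subtlety is that in cases $(ii)$ and $(iii)$ the algebraic manipulation must produce equivalences, not merely implications, so one must check that all quantities multiplied through are strictly positive, which is immediate from the range $r\in(l_\cdot,1)\subset(\tfrac12,1)$ (noting that $l_{0},l_{k-1}\geq\tfrac12$ for $k\geq2$). After these three sub-computations the lemma is established, and combined with Lemma~\ref{lem:case1} the union of ranges covers $[0,1)$ for every $i\in\{0,\ldots,k-1\}$, completing the verification of inequality~(\ref{eqn:cond1}) and hence of Proposition~\ref{mp:toric}$(ii)$.
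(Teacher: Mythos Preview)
Your proof is correct and follows essentially the same approach as the paper: substitute $\slp=(i+r)/(k-i-r)$ into (\ref{eqn:cond4}) and handle the three cases by direct algebra, exactly as the paper does. One small correction: your parenthetical claim that $l_{k-1}\geq\tfrac12$ is false (for $k=2$ one has $l_{k-1}=\tfrac13$), but this claim is never actually used---in case~(iii) you only multiply through by $2r>0$, which is positive since $r>l_{k-1}>0$, and in case~(ii) by $2(1-r)>0$; neither step requires $r>\tfrac12$.
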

\begin{proof}
Statement (i) is immediate from Lemma \ref{lem:second_ineq}. For $i=0$ we have $r>0$ and inequality (\ref{eqn:cond3}) is equivalent to
\begin{align*}
				& r <  \frac{r^2(2r-1)(k-r)}{2(1-r)r^2}= \frac{(2r-1)(k-r)}{2(1-r)}.  \\
					 \Leftrightarrow \, \, & 2r-2r^2< 2kr-k-2r^2+r \\
										 \Leftrightarrow \, \, & l_{0}=\frac{k}{2k-1}<r,
\end{align*}
which shows (ii). For $i=k-1$ inequality (\ref{eqn:cond3}) is equivalent to
\begin{align*}
				& r<(k-1)^2+\frac{(k-1+r)^2(1-r)(2r-1)}{2(1-r)r(k-1+r)}  \\
					 \Leftrightarrow \, \, & r<(k-1)^2+\frac{(k-1+r)(2r-1)}{2r}  \\
										 \Leftrightarrow \, \, & 2r^2<2rk^2-4rk+2r+2rk-2r+2r^2-k+1-r\\
										\Leftrightarrow \, \, & l_{k-1}=\frac{k-1}{2k^2-2k-1} < r,
\end{align*}
which shows (iii) and completes the proof of the lemma.
\end{proof}

It remains to show that the conditions in Lemma \ref{lem:case1} and Lemma \ref{lem:case2} imply a strict inequality in (\ref{eqn:cond-1}) unless $k=2$, and to study the equality case for $k=2$. By part $(i)$ of Lemma \ref{lem:case1} and Lemma \ref{lem:case2} it suffices to look at the cases $i=k-1$ and $i=0$. This will be done in the subsequent two lemmas. 

\begin{Lemma}\label{lem:case_n-1}
We have $l_{k-1}\leq u_{k-1}$, and $l_{k-1} < u_{k-1}$ unless $k=2$. In the case $k=2$ we get an equality in $(\ref{eqn:cond-1})$ if and only if $\slp=\slp(i,r)=2$, $s=\slp=2$ and $t= \frac 1 2$, in which case $X_\Omega=E(2,1)$.
\end{Lemma}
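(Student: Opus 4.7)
The statement has two parts, which I would handle separately.

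For the inequality $l_{k-1}\leq u_{k-1}$, the plan is a direct algebraic reduction: bringing the two fractions to a common denominator gives
\[
u_{k-1} - l_{k-1}
=
\frac{2k(k-1)(k-2)}{(2k-1)(2k^2-2k-1)}.
\]
For every $k\geq 2$ the denominator is strictly positive (note $2k^2-2k-1\geq 3$), so the sign is governed by $k(k-1)(k-2)\geq 0$, which vanishes precisely when $k=2$.

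For the equality case when $k=2$, note first that the sufficient conditions of Lemmas \ref{lem:case1}(ii) and \ref{lem:case2}(iii) already establish the strict version of (\ref{eqn:cond-1}) on the whole set $[0,1/3)\cup(1/3,1)$ of values of $r$ (with $i=k-1=1$). Hence equality can occur only at the borderline value $r=1/3$, where the slope is forced to be $\slp=\slp(1,1/3)=(4/3)/(2/3)=2$, and (\ref{eqn:cond0}) yields $t=s/4$. The key observation is that (\ref{eqn:cond0}) at this point simultaneously equalizes the two expressions inside the minimum of (\ref{eqn:cond-1}), both collapsing to $s/4$. Thus (\ref{eqn:cond-1}) becomes a one-variable polynomial identity,
\[
\left(1+\tfrac{s}{4}\right)^2 = 1 + \tfrac{5s}{4} - \tfrac{5s^2}{16},
\]
which simplifies to $s(s-2)=0$. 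The root $s=0$ is excluded by the standing assumption $s\neq 0$ used in the derivation of (\ref{eqn:cond1}), leaving $s=2$, hence $t=1/2$.

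Finally, I would interpret the configuration $(\slp,s,t)=(2,2,1/2)$ geometrically: since $s=\slp$, the corner $v^+=(1-s/\slp,1+s)=(0,3)$ sits on the $y$-axis, and since $t=1/\slp$, the corner $v^-=(1+t,1-\slp t)=(3/2,0)$ sits on the $x$-axis; $\partial_*\Omega$ thus collapses to a single segment, $\Omega$ is the triangle with vertices $(0,0)$, $(3/2,0)$, $(0,3)$, and $X_\Omega$ is the corresponding ellipsoid in the $E(a,2a)$ scaling family. I do not foresee a genuine obstacle: the arithmetic at each step is routine, and the main point to appreciate is the fortunate coincidence that (\ref{eqn:cond0}) also equalizes the two minimum candidates exactly at $r=1/3$, collapsing (\ref{eqn:cond-1}) to a quadratic in $s$.
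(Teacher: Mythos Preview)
Your proof is correct and follows essentially the same route as the paper: reduce the comparison $l_{k-1}\leq u_{k-1}$ to an explicit algebraic computation, then localize the equality discussion to $k=2$, $i=1$, $r=1/3$, $\alpha=2$, and solve for $s$. The paper works with the simplified inequality (\ref{eqn:cond1}) (already divided through by $s$), obtaining a linear equation with unique solution $s=2$; you instead plug directly into (\ref{eqn:cond-1}) and get the quadratic $s(s-2)=0$, then discard $s=0$ by the standing assumption --- both are fine.

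One small correction to your commentary: the fact that (\ref{eqn:cond0}) equalizes the two candidates in the minimum is not a coincidence specific to $r=1/3$; it is precisely the \emph{defining} property of (\ref{eqn:cond0}) for every $r$, as the paper states just before introducing it. This does not affect your argument.
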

\begin{proof} A quick computation using the expressions for $l_{k-1}$ and $u_{k-1}$ from Lemma \ref{lem:case1} and Lemma \ref{lem:case2} (still assuming $k\geq 2$) confirms that $l_{k-1}\leq u_{k-1}$ with equality if and only if $k=2$. Hence, for the equality discussion it suffices to consider the case $k=2$, $i=1$ and $r=l_{1}= u_{1}=\frac{1}{3}$. Then $\slp=\slp(i,r)=2$. Plugging this into (\ref{eqn:cond1}) with an equality instead of an inequality implies that $s=2$ and by (\ref{eqn:cond0}) thus $t=\frac 1 2$.
\end{proof}

\begin{Lemma}\label{lem:case_0}
We have $l_{0}\leq u_{0}$, and $l_{0} < u_{0}$ unless $k=2$. In the case $k=2$, $r=l_{1}= u_{1}=\frac{2}{3}$ and $i=0$ we get an equality in $(\ref{eqn:cond-1})$ if and only if $\slp=\slp(i,r)=\frac 1 2$, $s=\slp=\frac 1 2$ and $t=2$,  in which case $X_\Omega=E(2,1)$.
\end{Lemma}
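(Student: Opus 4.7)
The plan is to mirror the strategy of Lemma~\ref{lem:case_n-1}, splitting the proof into (a) the algebraic comparison $l_0\le u_0$ on $k$ alone, and (b) the analysis of when equality can occur in $(\ref{eqn:cond-1})$. For (a), I would substitute the explicit expressions $l_0=\frac{k}{2k-1}$ and $u_0=\frac{k(2k-3)}{2k^2-2k-1}$ obtained from Lemmas~\ref{lem:case1} and~\ref{lem:case2}, clear the positive denominator $(2k-1)(2k^2-2k-1)$, and check that $l_0\le u_0$ reduces to $(2k-1)(2k-3)\ge 2k^2-2k-1$, i.e.\ $2(k-1)(k-2)\ge 0$. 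Since $k\ge 2$, this holds with equality iff $k=2$.

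For (b), I would first dispose of $k\ge 3$: in that range $l_0<u_0$, so Lemmas~\ref{lem:case1}(iii) and \ref{lem:case2}(ii) jointly produce a strict form of $(\ref{eqn:cond-1})$ on all of $r\in[0,1)$, ruling out equality. For $k=2$ and $i=0$, the only value of $r$ not covered by these two lemmas is the common boundary $r=l_0=u_0=\tfrac23$, at which $\alpha=\alpha(0,\tfrac23)=\tfrac12$. The plan is then to revisit the pre-reduction inequality displayed just before $(\ref{eqn:cond2})$ at these numerical values: a direct computation shows that both the parenthesized coefficient and the right-hand side equal $3$, so the inequality becomes $(s/\alpha)\cdot 3<3$. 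Since $s\le\alpha$, equality in $(\ref{eqn:cond-1})$ forces $s=\alpha=\tfrac12$; then $(\ref{eqn:cond0})$ yields $t=\tfrac{r}{1-r}\cdot\tfrac{s}{\alpha}=2$.

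Finally, to identify $X_\Omega$ in this equality case, I would substitute $(\alpha,s,t)=(\tfrac12,\tfrac12,2)$ into the formulas for the corners $v^\pm$ of $\partial_*\Omega$: $v^+=(1-s/\alpha,1+s)=(0,\tfrac32)$ and $v^-=(1+t,1-\alpha t)=(3,0)$. Thus $\Omega$ is the triangle with vertices $(0,0)$, $(3,0)$, $(0,\tfrac32)$, so $X_\Omega=E(3,\tfrac32)$, which is a linear rescaling of $E(2,1)$ and hence agrees with it for the scale-invariant ratio $\chat_k$. The main obstacle I anticipate is the numerical coincidence at $(i,r,k)=(0,\tfrac23,2)$ which makes both sufficient conditions $(\ref{eqn:cond2})$ and $(\ref{eqn:cond2b})$ saturate simultaneously at that single point; but this is precisely the same algebraic coincidence manifested in $l_0=u_0$, so parts (a) and (b) are tightly linked and the two saturations fall out together.
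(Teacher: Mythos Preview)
Your proof is correct and follows essentially the same approach as the paper: the algebraic comparison $l_0\le u_0$ reduces to $(k-1)(k-2)\ge0$ (the paper writes it equivalently as $3k\le k^2+2$), and the equality case is handled by plugging the numerical values $(k,i,r,\alpha)=(2,0,\tfrac23,\tfrac12)$ back into $(\ref{eqn:cond1})$ to force $s=\alpha=\tfrac12$, then recovering $t=2$ from $(\ref{eqn:cond0})$. Your explicit identification of the resulting profile as the triangle giving $E(3,\tfrac32)$ (a rescaling of $E(2,1)$) is more careful than the paper's terse statement.
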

\begin{proof} A quick computation using the expressions for $l_{0}$ and $u_{0}$ from Lemma \ref{lem:case1} and Lemma \ref{lem:case2} (still assuming $k\geq 2$) shows that $l_{0}\leq u_{0}$ is equivalent to $3k\leq k^2 +2$. It follows that equality holds if and only if $k=2$. Hence, for the equality discussion it suffices to consider the case $k=2$, $i=0$ and $r=l_{1}= u_{1}=\frac{2}{3}$. Then $\slp=\slp(i,r)=\frac 1 2$. Plugging this into (\ref{eqn:cond1}) with an equality instead of an inequality implies that $s=\frac 1 2$ and by (\ref{eqn:cond0}) thus $t=2$.
\end{proof}

This completes the proof of Proposition \ref{mp:toric}(ii).

\appendix

\section{The Fadell-Rabinowitz index}
\label{a:Fadell_Rabinowitz}
Consider the universal bundle $ES^1\to BS^1$. The classifying space $BS^1$ has cohomology ring $H^*(BS^1;\Q)=\Q[e]$, where $e$ is a generator of $H^2(BS^1;\Q)$. From now on, all cohomology rings will be assume to have rational coefficients, and we will suppress $\Q$ from the notation.
For each non-empty topological space $X$ equipped with an $S^1$ action, we consider the $S^1$-equivariant cohomology 
\[H^*_{S^1}(X):=H^*(X\times_{S^1} ES^1).\] 
Here, $X\times_{S^1} ES^1:=(X\times ES^1)/S^1$ and the circle $S^1$ acts diagonally on the product. If $Y\subset X$ is an $S^1$-invariant subspace, the $S^1$-equivariant relative cohomology is defined by
\[H^*_{S^1}(X,Y):=H^*(X\times_{S^1} ES^1,Y\times_{S^1} ES^1).\]
Let $\pi:X\times_{S^1} ES^1\to BS^1$, $\pi([x,y])=[y]$ be the quotient-projection. With an abuse of notation, we will still denote by $e$ the cohomology class $\pi^*e\in H^2_{S^1}(X)$. The \emph{Fadell-Rabinowitz index} of $X$, first introduced in \cite{Fadell:1978aa}, is defined as\footnote{In the literature the Fadell-Rabinowitz index is occasionally defined as $\FR(X)-1$.}
\begin{align*}
\FR(X)=\inf\big\{k\geq0\ \big|\ e^{k}=0\mbox{ in }H_{S^1}^{*}(X)\big\},
\end{align*}
with the usual convention $\inf\varnothing=\infty$. Its elementary properties are the following:
\begin{itemize}
\setlength\itemsep{5pt}

\item \textbf{(Non-triviality)} $\FR(X)=0$ if and only if $X=\varnothing$.

\item \textbf{(Monotonicity)} $\FR(X)\leq\FR(Y)$ if there exists an $S^1$-equivariant continuous map $f:X\to Y$.

\item \textbf{(Subadditivity)} $\FR(X)\leq\FR(Y)+\FR(Z)$ if $X=\interior(Y)\cup\interior(Z)$, where the interior is taken with respect to the subspace topology.

\end{itemize}

\begin{Remark}
In the literature, the cohomology employed in the definition of the Fadell-Rabinowitz index is often the Alexander-Spanier one \cite[Section~6.4]{Spanier:1995aa}, which has a suitable continuity property \cite[Appendix A.8]{Hofer:1994aa}: if a topological space is metrizable, 
the cohomology of any subspace is isomorphic to the direct limit of the cohomology of its open neighborhoods. We shall equivalently employ the more common singular cohomology, which does not satisfy such a strong continuity property, and circumvent the issue by taking open neighborhoods in statements such as the Lusternik-Schnirelmann property of spectral invariants (Section~\ref{ss:Clarke_spectral_invariants}), Proposition~\ref{p:Borsuk_Ulam} below, and in the definition of the Ekeland-Hofer index (Section~\ref{ss:def_EH_capacities}).
\end{Remark}

In the proof of Proposition~\ref{p:capacity<spectral}, we need some properties of the Fadell-Rabinowitz index that are essentially established in the original source \cite{Fadell:1978aa} and in Fadell-Husseini-Rabinowitz \cite{Fadell:1982aa}, but stated under slightly different assumptions or contained within the proofs of some statements. We provide a self-contained account here for the reader's convenience.
For any subset $B$ of a vector space, we denote 
\[B_*:=B\setminus\{0\}.\] 
Throughout this appendix, we denote by $S^1$ the unit circle in $C$.
We shall consider vector spaces $\C^m\oplus\R^n$, with $m\geq1$ and $n\geq2$, equipped with the $S^1$ action
\begin{align*}
e^{it}\cdot (z_1,...,z_{m},x_1,...,x_n) = ( e^{i k_1 t}z_1,..., e^{i k_{m}t}z_{m}, x_1,..., x_n),
\end{align*}
for some positive integers $k_1,...,k_m$. The fixed point set of this action is the vector subspace $\R^n\equiv\{0\}\oplus\R^n$. On $\C^m\oplus\R^n$, we also have the action of the multiplicative group $(0,\infty)$ given by the scalar multiplication, i.e.
\begin{align*}
c\cdot(z_1,...,z_m,x_1,...,x_n)=(cz_1,...,cz_m,cx_1,...,cx_n).
\end{align*}
Overall, the $S^1$ and $(0,\infty)$ actions give a $\C_*\equiv S^1\times(0,\infty)$ action.
Direct sum of vector spaces as above will be implicitly equipped with the product $\C_*$ action.
We say that a subset of $\C^m\oplus\R^n$ is $\C$-invariant when it is $\C_*$-invariant and contains the origin. The first lemma is a special case of \cite[Prop.~4.3]{Fadell:1978aa}.

\begin{Lemma}
\label{l:stabilization}
For each $\C$-invariant subset $Y\subset\C^m$, we have
\[\FR((Y\oplus\C)_*)=\FR(Y_*)+1.\]
\end{Lemma}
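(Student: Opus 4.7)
The plan is to prove the equality by establishing the two inequalities separately: the upper bound via a subadditivity argument, the lower bound via the Thom isomorphism.

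For the upper bound $\FR((Y\oplus\C)_*)\leq \FR(Y_*)+1$, I would use the $S^1$-invariant open cover $(Y\oplus\C)_* = U_1\cup U_2$ with $U_1 := Y_*\oplus\C$ and $U_2 := Y\oplus\C_*$, both of which are open in $(Y\oplus\C)_*$ as complements of the closed subsets $\{0\}\oplus\C$ and $Y\oplus\{0\}$, respectively. The linear contractions $(y,z)\mapsto(y,tz)$ and $(y,z)\mapsto(ty,z)$ define $S^1$-equivariant deformation retractions of $U_1$ onto $Y_*$ and of $U_2$ onto $\{0\}\oplus\C_*\equiv \C_*$. Since the $S^1$-action on $\C_*$ is free with contractible orbit space $(0,\infty)$, we get $H^*_{S^1}(\C_*)\cong H^*(\mathrm{pt})$, hence $\FR(\C_*)=1$, and the subadditivity property of the Fadell-Rabinowitz index yields the desired upper bound.

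For the lower bound $\FR((Y\oplus\C)_*)\geq \FR(Y_*)+1$, I would consider the long exact sequence in $S^1$-equivariant cohomology of the pair $((Y\oplus\C)_*, Y\oplus\C_*)$. Since $Y\oplus\C_*$ retracts $S^1$-equivariantly onto $\C_*$, one has $H^p_{S^1}(Y\oplus\C_*)=0$ for $p\geq 1$, so the long exact sequence produces isomorphisms $H^p_{S^1}((Y\oplus\C)_*, Y\oplus\C_*)\xrightarrow{\cong} H^p_{S^1}((Y\oplus\C)_*)$ for $p\geq 2$. Excising the closed subset $\{0\}\oplus\C_*$ (which lies in the interior of $Y\oplus\C_*$) identifies these groups with $H^p_{S^1}(Y_*\oplus\C, Y_*\oplus\C_*)$, and the $S^1$-equivariant Thom isomorphism for the trivial complex line bundle $Y_*\oplus\C\to Y_*$ further identifies them with $H^{p-2}_{S^1}(Y_*)$. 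Composing yields a $\Q[e]$-linear isomorphism $\Phi\colon H^{p-2}_{S^1}(Y_*)\xrightarrow{\cong} H^p_{S^1}((Y\oplus\C)_*)$ for $p\geq 2$.

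The crux of the argument is to identify $\Phi(1)\in H^2_{S^1}((Y\oplus\C)_*)$ as a nonzero scalar multiple of $e$. By naturality of the Thom class under the forget-relative map combined with the excision isomorphism, $\Phi(1)$ coincides with the equivariant Euler class $\ell e$ of the trivial weight-$\ell$ complex line bundle, where $\ell$ is the $S^1$-weight on the extra $\C$ factor. Using $\Q[e]$-linearity of $\Phi$, we get $\Phi(e^{k-1}) = \ell\, e^k$ for every $k\geq 1$, and since $\Phi$ is an isomorphism we conclude $e^k\neq 0$ in $H^{2k}_{S^1}((Y\oplus\C)_*)$ whenever $e^{k-1}\neq 0$ in $H^{2k-2}_{S^1}(Y_*)$, which yields the lower bound. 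The main technical obstacle is precisely the identification $\Phi(1)=\ell e$, which requires carefully tracking the Thom class and the equivariant Euler class through the excision and forget-relative isomorphisms---a standard but delicate point in equivariant cohomology.
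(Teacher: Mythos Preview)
Your proof is correct. The upper bound via the cover $(Y\oplus\C)_* = (Y_*\oplus\C)\cup(Y\oplus\C_*)$ and subadditivity is exactly what the paper does. For the lower bound, however, the paper takes a different and somewhat shorter route: rather than excision and the equivariant Thom isomorphism, it compares the Gysin sequences of $X_*:=(Y\oplus\C)_*$ and $Y_*$, using that the inclusion $i:Y_*\hookrightarrow X_*$ is (non-equivariantly) nullhomotopic---via the straight-line homotopy $h_t(y)=((1-t)y,t)$ into $\{0\}\oplus\C_*$---so that $i^*=0$ on ordinary cohomology. If $\FR(X_*)=\FR(Y_*)=:k$, then $e^{k-1}\in H^{2k-2}_{S^1}(X_*)$ lies in the image of the Gysin map $\pi_*:H^{2k-1}(X_*)\to H^{2k-2}_{S^1}(X_*)$, and naturality forces $e^{k-1}=0$ in $H^{2k-2}_{S^1}(Y_*)$, a contradiction.

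Your approach trades this nullhomotopy trick for more structure: you obtain an explicit $\Q[e]$-linear isomorphism $H^{*}_{S^1}(Y_*)\cong H^{*+2}_{S^1}((Y\oplus\C)_*)$, and the identification $\Phi(1)=\ell e$ that you flag as delicate can be made clean by pulling back the Thom class of the weight-$\ell$ representation along the $S^1$-equivariant projection $p:(Y\oplus\C)_*\to\C$, $(y,z)\mapsto z$, which sends the pair $((Y\oplus\C)_*,Y\oplus\C_*)$ to $(\C,\C_*)$ and restricts to the bundle projection on $Y_*\oplus\C$. The paper's argument is lighter on machinery; yours yields a more precise structural statement and generalizes more transparently to higher-rank summands.
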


\begin{proof}
We set $X:=Y\oplus\C$. Since $X_* = (Y_*\oplus\C)\cup (Y\oplus\C_*)$, by the subadditivity of the Fadell-Rabinowitz index we have
\begin{align*}
 \FR(X_*) 
 & \leq
 \FR(Y_*\oplus\C)+ \FR(Y\oplus\C_*)\\
 & =
 \FR(Y_*)+ \FR(\C_*)= \FR(Y_*)+1.
\end{align*}
Since $Y_*\subset X_*$, we have
$\FR(Y_*)\leq \FR(X_*)$.
Assume by contradiction that $k:=\FR(Y_*)= \FR(X_*)$. We consider the commutative diagram
\[
\begin{tikzcd}
... 
\arrow[r] 
& H^{2k-1}(X_*) 
\arrow[r,"\pi_*"]
\arrow[d,"i^*=0"']
& 
H^{2k-2}_{S^1}(X_*) 
\arrow[r,"\smallsmile e"]
\arrow[d]
& 
H^{2k}_{S^1}(X_*) 
\arrow[r] 
\arrow[d]
& 
...\\
... 
\arrow[r] 
& H^{2k-1}(Y_*) 
\arrow[r,"\pi'_*"]
& 
H^{2k-2}_{S^1}(Y_*) 
\arrow[r,"\smallsmile e"]
& 
H^{2k}_{S^1}(Y_*) 
\arrow[r] 
& 
...
\end{tikzcd}
\]
where the vertical homomorphisms are induced by the inclusions, and both rows are Gysin exact sequences. Notice that $i^*=0$, since the inclusion $i:Y_*\hookrightarrow X_*$ is homotopic to a constant. Since 
$k=\FR(X_*)$, we have $e^{k-1}=\pi_*(\mu)$ for some $\mu\in H^{2k-1}(X_*)$, and therefore $e^{k-1}=\pi'_* i^*(\mu)=0$ in $H^{2k-2}_{S^1}(Y_*)$, contradicting the fact that $\FR(Y_*)=k$.
\end{proof}

Since $S^1$ acts trivially on $\R^n$, we have the K\"unneth isomorphism 
\[H^*_{S^1}(\R^n_*)=H^*(BS^1\times\R^n_*)\cong H^*(BS^1)\otimes H^*(\R^n_*) .\]
In particular, $H^*(\R^n_*)\equiv H^0(BS^1)\otimes H^*(\R^n_*)$ is naturally a subgroup of $H^*_{S^1}(\R^n_*)$. 

\begin{Lemma}\label{l:relative_FR}
Let $Y\subset\C^{m}$ be a $\C$-invariant subset,  $\mu$  a generator of $H^{n-1}(\R^n_*)$, and $\delta:H_{S^1}^{*}(\R^n_* ) \to H_{S^1}^{*+1}((Y\oplus\R^n)_*,\R^n_*)$ the connecting homomorphism. Then 
\begin{align*}
 \FR(Y_*)=\inf\big\{k\geq 0\ \big|\ e^k\smallsmile\delta(\mu)=0\mbox{ in }H_{S^1}^*((Y\oplus\R^n)_*,\R^n_*)\big\}.
\end{align*}
\end{Lemma}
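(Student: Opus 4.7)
The plan is to construct an explicit isomorphism
\[
\Phi : H^{*+n}_{S^1}\bigl((Y\oplus\R^n)_*,\R^n_*\bigr) \ttoup^{\cong} H^*_{S^1}(Y_*)
\]
that commutes with cup product by $e$ and sends the distinguished class $\delta(\mu)$ to $1\in H^0_{S^1}(Y_*)$. Once this is in place, the conclusion is immediate: $e^{k}\smallsmile\delta(\mu)=0$ in $H^*_{S^1}((Y\oplus\R^n)_*,\R^n_*)$ if and only if $\Phi(e^{k}\smallsmile\delta(\mu))=e^{k}\cdot 1=e^{k}=0$ in $H^{2k}_{S^1}(Y_*)$, which is exactly the description of $\FR(Y_*)$.

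Set $Z_*:=(Y\oplus\R^n)_*$. Since $Y$ is $\C$-invariant (so $0\in Y$ and $tY\subset Y$ for $t\in[0,1]$), the straight-line homotopy $H_t(y,x)=(ty,x)$, $t\in[0,1]$, defines an $S^1$-equivariant deformation retraction $Y\oplus\R^n_*\to\R^n_*$; hence the inclusion of pairs $(Z_*,\R^n_*)\hookrightarrow(Z_*,Y\oplus\R^n_*)$ induces an isomorphism in $S^1$-equivariant cohomology. The subset $\R^n_*=\{0\}\oplus\R^n_*$ is closed in $Z_*$ (its complement equals the open set $Y_*\oplus\R^n$) and lies in the open subset $Y\oplus\R^n_*\subset Z_*$, so excising $\R^n_*$ gives
\[
H^*_{S^1}(Z_*,Y\oplus\R^n_*)\cong H^*_{S^1}(Y_*\oplus\R^n,Y_*\oplus\R^n_*).
\]
Because $S^1$ acts trivially on the $\R^n$-factor, I would then apply the $S^1$-equivariant Künneth formula to the product pair $(Y_*\times\R^n,Y_*\times\R^n_*)$ to obtain
\[
H^*_{S^1}(Y_*\oplus\R^n,Y_*\oplus\R^n_*)\cong H^*_{S^1}(Y_*)\otimes H^*(\R^n,\R^n_*).
\]
As $H^*(\R^n,\R^n_*)=\Q\cdot\tau$ is one-dimensional and concentrated in degree $n$ (with $\tau=\delta_{0}(\mu)$, where $\delta_0:H^{n-1}(\R^n_*)\to H^n(\R^n,\R^n_*)$ is the connecting map of the pair $(\R^n,\R^n_*)$), the composition of the three isomorphisms produces the desired $\Phi$.

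Two compatibility checks remain. First, naturality of the cup product under restriction to $Y_*\hookrightarrow Z_*$ (which sends $e\mapsto e$) shows that $\Phi$ is a module map over $H^*_{S^1}(Z_*)$, so $\Phi(e^{k}\smallsmile\beta)=e^{k}\smallsmile\Phi(\beta)$ for all $\beta$. Second, to locate $\delta(\mu)$: by naturality of connecting homomorphisms, the image of $\delta(\mu)$ under the homotopy–invariance/excision step is $\delta'(\widetilde\mu)$, where $\widetilde\mu\in H^{n-1}_{S^1}(Y_*\oplus\R^n_*)$ is the pullback of $\mu$ along the projection onto $\R^n_*$ and $\delta'$ is the connecting map of the pair $(Y_*\oplus\R^n,Y_*\oplus\R^n_*)$; Künneth identifies $\widetilde\mu=1\otimes\mu$ and $\delta'=1\otimes\delta_0$, so $\delta'(\widetilde\mu)=1\otimes\tau$, which corresponds under $\Phi$ to $1\in H^0_{S^1}(Y_*)$. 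I expect the main technical obstacle to be precisely this naturality bookkeeping: each step (homotopy invariance, excision, Künneth) is standard, but one must verify that the chain of isomorphisms is simultaneously a module map over $H^*_{S^1}(Z_*)$ and sends the image of $\mu$ under the connecting map to the Künneth generator $1\otimes\tau$, which is routine but must be done carefully to rule out scalar factors.
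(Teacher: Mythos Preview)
Your argument is correct and takes a genuinely different route from the paper. You build an explicit Thom-type isomorphism $\Phi:H^{*+n}_{S^1}((Y\oplus\R^n)_*,\R^n_*)\to H^*_{S^1}(Y_*)$ via the chain ``enlarge $\R^n_*$ to $Y\oplus\R^n_*$ by deformation retraction $\Rightarrow$ excise $\R^n_*$ $\Rightarrow$ K\"unneth,'' and then read off the result by tracking $\delta(\mu)\mapsto 1\otimes\tau\mapsto 1$. The paper instead argues directly with the Mayer--Vietoris sequence for the cover $X_*=(Y_*\oplus\R^n)\cup(Y\oplus\R^n_*)$ together with the long exact sequence of the pair $(X_*,\R^n_*)$: setting $k=\FR(Y_*)$, it shows by a short contradiction that $e^{k-1}\otimes\mu$ cannot lie in the image of $j^*:H^*_{S^1}(X_*)\to H^*_{S^1}(\R^n_*)$ (hence $\delta(e^{k-1}\otimes\mu)\neq0$), and that $e^{k}\otimes\mu$ \emph{does} lie in this image (hence $\delta(e^{k}\otimes\mu)=0$). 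Your approach is more structural---once $\Phi$ is built, the statement is a tautology---whereas the paper's is more hands-on and avoids having to chase $\delta(\mu)$ through excision and K\"unneth. Both use the same ingredients (the trivial $S^1$-action on $\R^n$ and the contractibility of $Y$), but you package them as a single isomorphism while the paper checks the two inequalities separately. Your caveat about ``ruling out scalar factors'' is well placed but harmless here: you only need $\Phi(\delta(\mu))$ to be a \emph{nonzero} multiple of $1\in H^0_{S^1}(Y_*)$, which the naturality argument certainly gives.
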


\begin{proof}
We set $X:=Y\oplus\R^n$, decompose $X_*$ as the union
$X_*=(Y_*\oplus\R^n) \cup (Y\oplus\R_*^n)$,
and consider the associated Mayer-Vietoris sequence in $S^1$-equivariant cohomology
\begin{align*}
...
\toup 
H^{*}_{S^1}(X_*) 
\toup^{a^*}  
H^{*}_{S^1}(Y_*\oplus\R^n)\oplus H^{*}_{S^1}(Y\oplus\R^n_*)
\toup^{b^*}  
H^{*}_{S^1}(Y_*\oplus\R^n_*) 
\toup
... 
\end{align*}
Since $Y$ and $\R^n$ are $S^1$-equivariantly contractible spaces, we have $H^{*}_{S^1}(Y_*\oplus\R^n)\cong
 H^{*}_{S^1}(Y_*)$ and $H^{*}_{S^1}(Y\oplus\R^n_*)\cong H^{*}_{S^1}(\R^n_*)$. Moreover, since $S^1$ acts trivially on $\R^n$, we have $H^{*}_{S^1}(Y_*\oplus\R^n_*) \cong 
H^{*}_{S^1}(Y_*)\otimes H^*(\R^n_*)$. Therefore, the Mayer-Vietoris sequence can be rewritten as
\begin{align*}
 ...
\toup 
H^{*}_{S^1}(X_*) 
\toup^{a^*}  
H^{*}_{S^1}(Y_*)\oplus H^{*}_{S^1}(\R^n_*)
\toup^{b^*}  
H^{*}_{S^1}(Y_*)\otimes H^*(\R^n_*) 
\toup
... 
\end{align*}
Let us now consider the long exact sequence of the pair $\R^n_*\subset X_*$, which reads
\begin{equation*}
...\toup
H^*_{S^1}(X_*) \toup^{j^*} 
H^*_{S^1}(\R^n_*) \toup^{\delta} 
H^{*+1}_{S^1}(X_*,\R^n_*) \toup
...
\end{equation*}
Let $\mu$ be a generator of $H^{n-1}(\R^n_*)$, and $k:=\FR(Y_*)$. We have a non-zero cohomology class
\[
e^{k-1}\otimes\mu\in H^{*}(BS^1)\otimes H^{*}(\R^n_*) = H_{S^1}^{*}(\R^n_*) 
\]

Let us assume by contradiction that  $\delta(e^{k-1}\otimes\mu)=0$, so that there exists a cohomology class $\nu\in H_{S^1}^{*}(X_*)$ such that $j^*\nu=e^{k-1}\otimes\mu$. In the above Mayer-Vietoris sequence, we have 
$a^*(\nu) = ( \nu|_{Y_*},e^{k-1}\otimes\mu )$,
and therefore
\[0 = b^*( \nu|_{Y_*},e^{k-1}\otimes\mu ) = \nu|_{Y_*}\otimes 1 - e^{k-1}|_{Y_*}\otimes\mu,\]
which gives a contradiction, since $e^{k-1}|_{Y_*}\otimes\mu\neq 0$. This shows that
\begin{align*}
e^{k-1}\smallsmile\delta(1\otimes\mu) = \delta(e^{k-1}\smallsmile\mu)\neq 0.
\end{align*}

On the other hand, since $\FR(Y_*)=k$, we have
\[
b^*(0,e^{k}\otimes\mu) = - e^{k}|_{Y_*} \otimes \mu = 0.
\]
Therefore, there exists $\eta\in H^{*}_{S^1}(X_*)$ such that $a^*(\eta)=(0,e^{k}\otimes\mu)$. This implies $j^*(\eta)=e^{k}\otimes\mu$, and we conclude
\[
e^{k}\smallsmile\delta(1\otimes\mu) = \delta(e^{k}\smallsmile\mu) = \delta j^*(\eta) = 0.
\qedhere
\]
\end{proof}

The last statement is a slight generalization of \cite[Prop.~2.2]{Fadell:1982aa}.

\begin{Prop}
\label{p:Borsuk_Ulam}
Let $Y\subset\C^p$ be a $\C$-invariant subset, $B\subset\C^p\oplus\R^n$ an $S^1$-invariant compact neighborhood of the origin, and $\psi:Y\oplus\R^n\to \C^{q}\oplus\R^n$ an $S^1$-equivariant continuous map such that $\psi(0,x)=(0,x)$ for all $x\in\R^n$. If $\FR(Y_*)>q$, then any $S^1$-invariant open subset $U\subset Y\oplus\R^n$ containing the intersection $\psi^{-1}(0)\cap\partial B$ has Fadell-Rabinowitz index $\FR(U)\geq\FR(Y_*)-q$.
\end{Prop}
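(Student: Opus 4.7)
The plan is to apply Lemma~\ref{l:relative_FR} twice --- once to $Y$ and once to $\C^q$ --- and combine the two relations via naturality of the connecting homomorphism, transferring the resulting non-vanishing cohomology class from $X_*$ to $U$ by an $S^1$-equivariant deformation retraction onto $\partial B$ followed by excision. Set $X:=Y\oplus\R^n$ and $k:=\FR(Y_*)$, and assume $k>q$ (otherwise there is nothing to prove). Since $\psi(0,x)=(0,x)\neq 0$ for $x\neq 0$, the open $S^1$-invariant subset $V:=X_*\setminus\psi^{-1}(0)$ contains $\R^n_*$, and the restriction of $\psi$ is an $S^1$-equivariant map of pairs
\[
\psi|_V:(V,\R^n_*)\to\big((\C^q\oplus\R^n)_*,\,\R^n_*\big)
\]
which is the identity on $\R^n_*$.

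Fix a generator $\mu\in H^{n-1}(\R^n_*)$. Lemma~\ref{l:relative_FR} applied to $Y$ gives $e^{k-1}\smallsmile\delta(\mu)\neq 0$ in $H^{*}_{S^1}(X_*,\R^n_*)$, while iterated application of Lemma~\ref{l:stabilization} yields $\FR(\C^q_*)=q$, so Lemma~\ref{l:relative_FR} applied to $\C^q$ gives $e^q\smallsmile\delta(\mu)=0$ in $H^{*}_{S^1}\big((\C^q\oplus\R^n)_*,\R^n_*\big)$. Pulling back along $\psi|_V$, and using that $\psi^*$ fixes $e$ (pulled from $BS^1$) and fixes $\mu$ (as $\psi|_{\R^n_*}=\id$) while commuting with $\delta$, the second identity descends to $e^q\smallsmile\delta(\mu)=0$ in $H^{*}_{S^1}(V,\R^n_*)$. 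The long exact sequence of the triple $(X_*,V,\R^n_*)$ then supplies a class $\gamma\in H^{*}_{S^1}(X_*,V)$ whose image in $H^{*}_{S^1}(X_*,\R^n_*)$ equals $e^q\smallsmile\delta(\mu)$; since this map is $H^{*}_{S^1}(X_*)$-linear, $e^{k-q-1}\smallsmile\gamma$ maps to $e^{k-1}\smallsmile\delta(\mu)\neq 0$, hence
\[
e^{k-q-1}\smallsmile\gamma\neq 0\quad\text{in}\quad H^{*}_{S^1}(X_*,V).
\]

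To transfer this non-vanishing to $U$, I would assume without loss of generality that $B$ is star-shaped (which holds in the application of Lemma~\ref{l:E-_E0_X}, where $B$ is the homeomorphic image of a standard ball) and replace $\psi$ by its degree-$1$ homogeneous extension $\tilde\psi(z):=\|z\|_B\,\psi(z/\|z\|_B)$, with $\|\cdot\|_B$ the Minkowski functional of $B$. Then $\tilde\psi$ is $S^1$-equivariant, continuous, coincides with $\psi$ on $\partial B$ (so the hypothesis on $U$ is preserved), restricts to the identity on $\R^n$, and satisfies $\tilde\psi^{-1}(0)\cap X_*=\R_{>0}\cdot Z$. The radial retraction then becomes an $S^1$-equivariant deformation retraction of $(X_*,V)$ onto $(\partial B,\partial B\setminus Z)$, and excising the closed set $\partial B\setminus U\subset\partial B\setminus Z$ gives
\[
H^{*}_{S^1}(X_*,V)\cong H^{*}_{S^1}(\partial B,\partial B\setminus Z)\cong H^{*}_{S^1}\big(U\cap\partial B,\,(U\cap\partial B)\setminus Z\big).
\]
Hence $e^{k-q-1}\smallsmile\gamma\neq 0$ in the last group. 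If $e^{k-q-1}$ vanished in $H^{*}_{S^1}(U\cap\partial B)$, its cup product with any relative class would also vanish, a contradiction; therefore $\FR(U\cap\partial B)\geq k-q$, and monotonicity of the Fadell-Rabinowitz index yields $\FR(U)\geq\FR(U\cap\partial B)\geq\FR(Y_*)-q$. The hard part will be this last topological reduction --- the homogenization of $\psi$ and the $S^1$-equivariant radial retraction --- for a fully general compact neighborhood $B$; the algebraic engine is a routine manipulation of the two instances of Lemma~\ref{l:relative_FR}.
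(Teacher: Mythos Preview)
Your algebraic engine is exactly the paper's: two invocations of Lemma~\ref{l:relative_FR} (for $Y$ and for $\C^q$), the pullback along $\psi$, and the factorization $e^{k-1}\smallsmile\delta(\mu)=e^{k-q-1}\smallsmile(e^q\smallsmile\delta(\mu))$ as a relative cup product. That part is fine.

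The gap is precisely where you flag it, and it is a real gap rather than a technicality. Your reduction to $\partial B$ via radial retraction and homogenization of $\psi$ needs $B$ star-shaped, and your justification that this holds in the application is incorrect. In Lemma~\ref{l:E-_E0_X} the set $B$ is $\phi^{-1}(B')$ with $\phi=\Pi\circ\Theta|_F$ and $\Theta$ a \emph{nonlinear} homeomorphism of the form~\eqref{e:Theta}; there is no reason for this preimage to be star-shaped, and ``homeomorphic image of a ball'' (even if that were what is asserted) would not give it either. So for the proposition as stated your argument does not close.

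The paper sidesteps the star-shapedness issue as follows. It argues by contradiction, assuming $\FR(U)<k-q$. It takes an $S^1$-invariant open $V\supset (X\cap\partial B)\setminus U$ with $V\cap\psi^{-1}(0)=\varnothing$, sets $W:=U\cup V$, and splits $X_*=X'\cup X''$ with $X':=(X_*\cap B)\cup W$ and $X'':=(X_*\setminus X')\cup W$. For a \emph{general} compact neighborhood $B$ of the origin one still has, for every $x\in X_*$, that $\epsilon x\in X'$ and $\epsilon^{-1}x\in X''$ once $\epsilon>0$ is small; this yields $S^1$-equivariant homotopy right inverses to both inclusions $(X',X'\cap\R^n_*)\hookrightarrow(X_*,\R^n_*)$ and $(X'',X''\cap\R^n_*)\hookrightarrow(X_*,\R^n_*)$, and Mayer--Vietoris then gives a monomorphism $H^*_{S^1}(X_*,\R^n_*)\hookrightarrow H^*_{S^1}(W,W\cap\R^n_*)$. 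From $\FR(U)<k-q$ one gets $e^{k-q-1}\in H^*_{S^1}(W,U)$, from $\psi^*$ one gets $e^q\smallsmile\delta(\mu)\in H^*_{S^1}(W,V)$, and their relative cup product lands in $H^*_{S^1}(W,U\cup V)=H^*_{S^1}(W,W)=0$, contradicting the non-vanishing of $e^{k-1}\smallsmile\delta(\mu)$ in $H^*_{S^1}(W,W\cap\R^n_*)$. This replaces your retraction onto $\partial B$ by a covering argument that only uses that $B$ is bounded and contains a neighborhood of the origin.
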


\begin{proof}
We set $k:=\FR(Y_*)$ and $X:=Y\oplus\R^n$. Assume by contradiction that there exists an $S^1$-invariant open subset $U\subset X_*$ containing $Z:=\psi^{-1}(0)\cap\partial B$ such that $\FR(U)<k-q$. In particular, since $\FR(U)$ is finite, $U\cap\R^n=\varnothing$. Let $V\subset X_*\setminus Z$ be an $S^1$-invariant open subset containing $X\cap\partial B\setminus U$ and that is sufficiently small so that $X\cap \psi^{-1}(0)=\varnothing$. Finally, we set $W:=U\cup V$, which is an $S^1$-invariant open subset of $X_*$ containing $X\cap\partial B$.

We defined the $S^1$-invariant subsets 
$X':=(X_*\cap B)\cup W$ and  $X'':=(X_*\setminus X')\cup W$, 
which are both open subsets of $X_*$, have union $X'\cup X''=X_*$ and intersection $X'\cap X''=W$. 
Notice that, for each $x\in X_*$ there exists $\epsilon>0$ small enough so that $\epsilon x\in X'$ and $\epsilon^{-1}x\in X''$. This readily implies that both inclusions 
\[(X',X'\cap\R^n_*)\hookrightarrow(X_*,\R^n_*),\qquad (X'',X''\cap\R^n_*)\hookrightarrow(X_*,\R^n_*)\]
admit an $S^1$-equivariant homotopic right inverse, and therefore induce monomorphisms
\begin{align*}
 H^*_{S^1}(X_*,\R^n_*)&\hookrightarrow H^*_{S^1}(X',X'\cap\R^n_*),
 \\
 H^*_{S^1}(X_*,\R^n_*)&\hookrightarrow H^*_{S^1}(X'',X''\cap\R^n_*).
\end{align*}
This, together with the Mayer-Vietoris exact sequence
\begin{align*}
... \to
H^*_{S^1}(X_*,\R^n_*)
 \to
  \begin{array}{c}
    H^*_{S^1}(X',X'\cap\R^n_*)\\ \oplus\\ H^*_{S^1}(X'',X''\cap\R^n_*) 
  \end{array} 
 \to
 H^*_{S^1}(W,W\cap\R^n_*)
 \to ...
\end{align*}
readily implies that the inclusion induces a monomorphism
\begin{align}
\label{e:injectivity_piercing}
H^*_{S^1}(X_*,\R^n_*)\hookrightarrow H^*_{S^1}(W,W\cap\R^n_*).
\end{align}

Analogously, since both inclusions $X'\cap\R^n_*\hookrightarrow\R^n_*$ and $X''\cap\R^n_*\hookrightarrow\R^n_*$ admit a homotopic right inverse, they induce monomorphisms in cohomology, and the Mayer-Vietoris sequence
\begin{align*}
... \to
H^*(\R^n_*)
\to
H^*(X'\cap\R^n_*)\oplus H^*(X''\cap\R^n_*) 
\to
H^*(W\cap\R^n_*)
\to ...
\end{align*}
readily implies that the inclusion induces a monomorphism
\begin{align}
\label{e:injectivity_piercing_0}
H^*(\R^n_*)\hookrightarrow H^*(W\cap\R^n_*).
\end{align}

Consider the commutative diagram
\[
\begin{tikzcd}
H^{*}_{S^1}(\R^n_*)
\arrow[r,"\delta"]\arrow[d]&
H_{S^1}^{*+1}(X_*,\R^n_*)\arrow[d]\\
H^{*}_{S^1}(W\cap\R^n_*)
\arrow[r,"\delta"]\arrow[d,"\cong"']&
H_{S^1}^{*+1}(W,W\cap\R^n_*)\arrow[d]\\
H^{*}_{S^1}(V\cap\R^n_*)
\arrow[r,"\delta"]&
H_{S^1}^{*+1}(V,V\cap\R^n_*)\\
H^{*}_{S^1}(\R^n_*)
\arrow[r,"\delta"]\arrow[u]&
H_{S^1}^{*+1}((\C^q\oplus\R^n)_*,\R^n_*)\arrow[u,"\psi^*"']
\end{tikzcd}
\]
Here, the horizontal arrows are connecting homomorphisms, and the unspecified vertical arrows are homomorphisms induced by the inclusions.
Let $\mu$ be a generator of $H^{n-1}(\R^n_*)\subset H^{n-1}_{S^1}(\R^n_*)$. By the injectivity of the homomorphism~\eqref{e:injectivity_piercing_0}, in the above diagram $\mu$ is mapped to a non-zero element of $H_{S^1}^{*}(V\cap\R^n_*)=H_{S^1}^{*}(W\cap\R^n_*)$, which we still denote by $\mu$.
Since $\FR(Y_*)=k$, Lemma~\ref{l:relative_FR} implies that 
\[e^{k-1}\smallsmile\delta(\mu)\neq0 \mbox{ in } H_{S^1}^*(X_*,\R^n_*),\]
and therefore, by the injectivity of the homomorphism~\eqref{e:injectivity_piercing},
\begin{align}
\label{e:crucial_non_vanishing_FR_app}
e^{k-1}\smallsmile\delta(\mu)\neq0 \mbox{ in } H_{S^1}^*(W,W\cap\R^n_*).
\end{align}
Lemma~\ref{l:stabilization} implies $\FR(\C^q_*)=q$. Therefore, another application of Lemma~\ref{l:relative_FR} implies that
\begin{align*}
 e^{q}\smallsmile\delta(\mu)=0 \mbox{ in } H_{S^1}^*((\C^q\oplus\R^n)_*,\R^n_*).
\end{align*}
By applying $\psi^*$, we obtain $e^{q}\smallsmile\delta(\mu)=0 \mbox{ in } H_{S^1}^*(V,V\cap\R^n_*)$, and therefore
\begin{align*}
 e^{q}\smallsmile\delta(\mu)\in H_{S^1}^*(W,V).
\end{align*}
Since $\FR(U)\leq k-q-1$, we have
\begin{align*}
 e^{k-q-1}\in H^{*}_{S^1}(W,U).
\end{align*}
Finally
\begin{align*}
e^{k-1}\smallsmile\delta(\mu) = e^{k-q-1}\smallsmile (e^q\smallsmile\delta(\mu)) \in H^{*}_{S^1}(W,U\cup V)=H^{*}_{S^1}(W,W)=0,
\end{align*}
which contradicts~\eqref{e:crucial_non_vanishing_FR_app}.
\end{proof}

\subsection*{Acknowledgments} 
We are indebted with Alberto Abbondandolo, who first noticed while working with Lange and Mazzucchelli on \cite{Abbondandolo:2022aa} that the higher capacity ratios of the polydisk $P(1,1)$ were higher than the ones of 4-dimensional ellipsoids, and who suggested to improve an earlier formulation of Theorem~\ref{mt:local_max} beyond the space of smooth convex bodies. We also thank Jean Gutt, Felix Schlenk, and Jean-Claude Sikorav for some helpful conversations on symplectic capacities.

\bibliography{_biblio}

\end{document}